\definecolor{cmyk}{cmyk}{.0,.4,.9,.5}					% b, r, y, bk
\renewcommand{\H}{\mathcal{H}} 										%re!
\renewcommand{\O}{\mathcal{O}}										%re!
\renewcommand{\P}{\mathcal{P}}										%re!
\newcommand{\Rg}{\mathfrak{R}}
\newcommand{\CC}{{\mathbb{C}}}
\newcommand{\KK}{{\mathbb{K}}}
\newcommand{\MM}{{\mathbb{M}}}
\newcommand{\NN}{{\mathbb{N}}}
\newcommand{\RR}{{\mathbb{R}}} 
\newcommand{\ZZ}{{\mathbb{Z}}}
\newcommand{\As}{{\mathscr{A}}}\newcommand{\Bs}{{\mathscr{B}}}\newcommand{\Cs}{{\mathscr{C}}}
\newcommand{\Ds}{{\mathscr{D}}}
\newcommand{\Es}{{\mathscr{E}}}\newcommand{\Fs}{{\mathscr{F}}}
\newcommand{\Is}{{\mathscr{I}}}\newcommand{\Ks}{{\mathscr{K}}} 
\newcommand{\Ms}{{\mathscr{M}}}\newcommand{\Ns}{{\mathscr{N}}}\newcommand{\Os}{{\mathscr{O}}}
\newcommand{\Ps}{{\mathscr{P}}}										%\P with \PS
\newcommand{\Qs}{{\mathscr{Q}}}\newcommand{\Rs}{{\mathscr{R}}}
\newcommand{\Ts}{\mathscr{T}} 										%we substituted \T with \TS 
\newcommand{\Xs}{{\mathscr{X}}}
\newcommand{\Ys}{{\mathscr{Y}}}
\DeclareFontFamily{U}{rsfs}{\skewchar\font127 }
\DeclareFontShape{U}{rsfs}{m}{n}{% 
   <5> <6> rsfs5
   <7> rsfs7
   <8> <9> <10> <10.95> <12> <14.4> <17.28> <20.74> <24.88> rsfs10
}{}
\DeclareSymbolFont{rsfs}{U}{rsfs}{m}{n} 
\DeclareSymbolFontAlphabet{\scr}{rsfs}								% was ... mathscr
\newcommand{\Af}{\scr{A}}\newcommand{\Cf}{\scr{C}}
\newcommand{\Hf}{\scr{H}}
\newcommand{\Mf}{\scr{M}}\newcommand{\Pf}{\scr{P}}
\newcommand{\Sf}{\scr{S}}
\newcommand{\Xf}{\scr{X}}
\DeclareMathOperator{\Hom}{Hom}
\DeclareMathOperator{\hatcirc}{\hat{\circ}}
\DeclareMathOperator{\hatplus}{\hat{+}}
\DeclareMathOperator{\hatcdot}{\hat{\cdot}}
\renewcommand{\emph}{\textbf} 										% emphasis with boldface
\newcommand{\cj}[1]{\overline{#1}}									% usage: \cj{x} for conjugate of x
\newcommand{\ip}[2]{\langle #1\mid #2\rangle}						% usage \ip{a}{b} Dirac Inner product
\renewcommand{\iff}{\Leftrightarrow}								% IFF 
\newcommand{\imp}{\Rightarrow}										% implication
\newcommand{\hlink}[2]{\href{#1}{\texttt{#2}}} 						% usage: \hlink{URL}{TEXT} to make hyperlink 
\newcommand{\xqedhere}[2]{%
  \rlap{\hbox to#1{\hfil\llap{\ensuremath{#2}}}}}
\newcommand{\xqed}[1]{%
  \leavevmode\unskip\penalty9999 \hbox{}\nobreak\hfill
  \quad\hbox{\ensuremath{#1}}}
\theoremstyle{plain}
\newtheorem{theorem}{Theorem}[section]								% theorems numbered under sections
\newtheorem{proposition}[theorem]{Proposition}
\newtheorem{definition}[theorem]{Definition}
\theoremstyle{definition} 
\newtheorem{remark}[theorem]{Remark}
\newtheorem{example}[theorem]{Example}  
\numberwithin{equation}{section}  									% {chapter} to enumerate equations after sections
\title{\textbf{On Strict Higher C*-categories}}
\author{\normalsize  
Paolo Bertozzini $^a$\footnote{Independent researcher - Bangkok (since July 2024).}  \  \  
Roberto Conti $^b$  \ 
Wicharn Lewkeeratiyutkul $^c$ \ 
Noppakhun Suthichitranont $^d$
\\  
\normalsize $^a$ \textit{Department of Mathematics and Statistics, Faculty of Science and Technology,}
\\
\normalsize \textit{Thammasat University, Pathumthani 12121, Thailand}
\\
\normalsize e-mail: \texttt{paolo.th@gmail.com} 
\\ 
\normalsize $^b$ \textit{Dipartimento di Scienze di Base e Applicate per l'Ingegneria,} 
\\
\normalsize \textit{Sapienza Universit\`a di Roma, Via A. Scarpa 16, I-00161 Roma, Italy}
\\
\normalsize e-mail: \texttt{roberto.conti@sbai.uniroma1.it} 
\\
\normalsize $^c$ \textit{Department of Mathematics and Computer Science, Faculty of Science,}
\\
\normalsize \textit{Chulalongkorn University, Bangkok 10330, Thailand} 
\\ 
\normalsize e-mail: \texttt{Wicharn.L@chula.ac.th} 
\\
\normalsize $^d$ \textit{393 Putthamonthon 3, soi 18/4, Sala Thammasop, Taweewattana, Bangkok 10170, Thailand}
\\
\normalsize e-mail: \texttt{noppakhuns@hotmail.com}
}
\date{\normalsize{
		first version: 28 July 2014 \quad revised: 12 August 2016 / 14 June 2017 \quad finalized: 27 September 2017}  corrected typos: 05 October 2017 \quad submitted: 07 October 2017 \quad referee report: 04 February 2019 \\ second revision: 09 June 2019 \quad  resubmitted: 14 June 2019 \quad third revision: 01 July 2019 
		\quad Important notes in footnotes \ref{foo: norms} and \ref{foo: norms2} added: 29 October 2025}
\begin{document}

\maketitle

\begin{center}
\textit{Dedicated to the memory of John E.Roberts who gave us C*-categories.} 
\end{center} 

\begin{abstract} \noindent 
We provide definitions for strict involutive higher categories (a vertical categorification of dagger categories), strict 
higher C*-categories and higher Fell bundles (over arbitrary involutive higher topological categories).  
We put forward a proposal for a relaxed form of the exchange property for higher \hbox{(C*)-categories} that avoids the Eckmann-Hilton collapse and hence allows the construction of explicit non-trivial ``non-commutative'' examples arising from the study of hypermatrices and hyper-C*-algebras, here defined. 
Alternatives to the usual globular and cubical settings for strict higher categories are also explored. 
Applications of these non-commutative higher C*-categories are envisaged in the study of morphisms in non-commutative geometry and in the algebraic formulation of relational quantum theory.  

\medskip

\noindent
\emph{Keywords:} C*-category, Fell Bundle, Involutive Category, Higher Category, Hypermatrix. 

\smallskip

\noindent
\emph{MSC-2010:} 					
					18D05,			% Double Categories, 2-categories, bicategories and generalizations
%					46L87,			% Non-commutative Geometry
%					46L51, 
%					46L10, 
					46M15, 			% Categories and Functors (in Functional Analysis)
					46M99,			% Other Topics about Categories in Functional Analysis 
 					16D90.			% Module Categories, module theory in a category-theoretic context,
									%	Morita equivalence and duality
%					18F99. 			% Categories and Geometry
%					18B99			% Special Categories

%					58A99, 			% General Theory of Differentiable Manifolds
%					58B34, 			% Non-commutative Geometry (Infinite Dimensional Manifolds)
%					15A66.			% Clifford Algebras, Spinors			
					%16S34 			% Group Rings
					%20C07			% Group Rings on Infinite Groups and Their Modules
					%22D15			% Group Algebras of Locally Compact Groups
%					%81P10			% Logical Foundations of Quantum Mechanics	
%					%81				% Operator Algebra Methods (in Quantum Theory)	
%					%81T75			% Non-commutative Geometry methods (in Quantum Field Theory)			
%					81R60,			% Non-commutative Geometry (in Quantum Theory)
%					81T05,			% Axiomatic Quantum Field Theory and Operator Algebra
%					83C65.			% Methods of Non-commutative Geometry (in General Relativity)

\end{abstract}

\tableofcontents

%%%%%%%%%%%%%%%%%%%%%%%%%%%%%%%%%%%%%%%%%%%%%%%%%%%%%%%%%%%%%%%%%%%%%%%%%%%%%%%%%%%%%%%%%%%%%%%%%%%%

\section{Introduction} 

The usage of categorical methods in functional analysis is probably going back to A.Grothendieck and F.Linton~\cite{Li}, but category theory began to be applied to the theory of operator algebras in the seventies, with the pioneering work of J.E.Roberts~\cite{GLR} that introduced the definition of C*-categories mainly in view of applications to algebraic quantum field theory. Since then C*-categories have been extensively used by J.E.Roberts and S.Doplicher~\cite{DR} in the theory of superselection sectors in algebraic quantum field theory (see R.Haag's monograph~\cite{H} and H.Halvorson-M.M\"uger's review~\cite{HM}). 
Operator categories and \hbox{C*-(tensor) categories} have been further significantly studied by S.Yamagami~\cite{Y0,Y1,Y}, P.Mitchener~\cite{M}, T.Kajiwara-C.Pinzari-Y.Watatani~\cite{KPW}, M.M\"uger~\cite{Mu,Mu2}, and more recently A.Henriquez-D.Penneys~\cite{HP} and C.Jones-D.Penneys~\cite{JP} to name just a few. 
The closely related and more general notion of a Fell bundle over a topological group was first defined by J.Fell~\cite{FD} (under the name of a Banach $*$-algebraic bundle) and later extended respectively to: topological groupoids, by S.Yamagami and then A.Kumjian~\cite{K}; topological inverse semigroups, by N.Seiben (see R.Exel~\cite{Ex}); and topological involutive inverse categories in~\cite{BCL2,BCL4}. 

\medskip 

The study of higher $n$-categories, at least in their strict versions, can be traced back to the work of C.Ehresmann~\cite{E} on structured categories (the notion of $\infty$-groupoid predating actually to a paper of D.Kan on simplicial complexes~\cite{Ka}). Strict $\omega$-categories were proposed by J.E.Roberts~\cite{R} (in his work on local cohomology in algebraic quantum field theory) and were independently developed by R.Brown-P.Higgins~\cite{BH} and A.Grothendieck~\cite{G} with strong motivations from homotopy theory in algebraic topology.  
Weak higher category theory, starting from the notion of bicategory of J.Benabou~\cite{Be}, subsequently formalized by R.Street~\cite{St}, recently developed into a wide area of extremely active research (see for example as references T.Leinster~\cite{Lsur, L}, E.Cheng-A.Lauda~\cite{CL} and the wiki-resources at \hlink{http://ncatlab.org/nlab}{http://ncatlab.org/nlab}).  

\medskip 

Surprisingly, despite their quite close initial historical developments and the current widespread usage of categorical methods/techniques in recent research,\footnote{
As can be seen for example in the works by J.E.Roberts, G.Ruzzi, E.Vasselli~\cite{V1,RRV,RRV2} in algebraic quantum field theory; by S.Abramski, B.Coecke, C.Heunen, M.L.Reyes and their collaborators~\cite{AC,AC2,CP,Coe,AH,HR} in categorical quantum mechanics; and by A.Buss-R.Meyer-C.Zhu~\cite{BMZ1,BMZ2}, S.Mahanta~\cite{Ma}, among many others, in non-commutative topology and geometry.} 
a satisfactory interplay between higher categories and operator algebra theory has never been achieved and higher category theory has more recently evolved along lines, much closer to classical higher homotopy, that in our opinion further prevent a direct interaction between the two subjects. 

\medskip 

Although monoidal C*-categories (2-C*-categories with one object) have been systematically used since the inception of the theory of superselection sectors~\cite{DR,H}, a first notion of 2-C*-category appears only in the paper by R.Longo-J.E.Roberts~\cite{LR} and the topic has been later reconsidered by P.Zito~\cite{Z}. 
Following the studies on the generalization of V.Jones' index theory of subfactors via Q-systems, as in R.Longo~\cite{Lo1,Lo2}, and its relations with low dimensional superselection theory~\cite{KLM} (see for example the expositions in Y.Kawahigashi~\cite{Kaw}, K.-H.Rehren~\cite{Re} and the references in~\cite{BLKR}), an enormous body of research in conformal field theory has been systematically using several variants of C*-tensor categories and 2-C*-categories as can be seen in the recent works by A.Bartels-C.L.Douglas-A.Henriques-C.Jones-D.Penneys-J.Tener~\cite{He0,He1,He2}~\cite{BDH}~\cite{HP,HPT1,HPT2,JP} among others.  
Bicategories (weak 2-categories) of von Neumann algebras have also been investigated by N.Landsman~\cite{Lan},  R.M.Brouwer~\cite{Br} and Y.Sawada-S.Yamagami~\cite{Y,Y2}.  
Anyway, no hint of operator algebraic structures capable of climbing up, in a non-trivial way, the ladder of \hbox{$n$-C*-categories} for $n$ bigger than 2 has been produced,\footnote{With the possible exception of the 3-categories recently introduced by A.Bartel-C.L.Douglas-A.Henriques~\cite{BDH}.} so that higher C*-categories, and with them the full development of a comprehensive theory of ``higher functional analysis'', have remained elusive.  
We announced a tentative definition of strict (globular) $n$-C*-category (still based on the usual axioms for strict higher categories) in~\cite[section~4.2.2]{BCL1} with details in~\cite[section~3.3]{BCL3} and in the following paper we propose a much wider notion of strict $n$-C*-category, able to encompass several quite interesting non-trivial and natural examples of non-commutative operator theoretic constructs.

\medskip 

The well-known term \emph{categorification}, introduced in L.Crane-D.Yetter~\cite{CY}, is informally used to denote any ``mathematical process'' in which set-theoretic structures get replaced by ``categorical versions'' (see for example the discussion in J.Baez-J.Dolan~\cite{BD}). 
In this work we make systematic use of a more specific terminology, that we introduced for example in~\cite[section~4.2]{BCL1}, distinguishing between a \emph{horizontal categorification} (also called ``oidification'' or ``many-objectification'') that is the process consisting in replacing one-object-structures with their many-object versions (adding objects and morphisms between them) as for example in: 
\begin{align*}
& 
[\text{monoid}]\mapsto[\text{category}], 
& &
[\text{group}]\mapsto[\text{groupoid}], 
& &
[\text{ring}]\mapsto[\text{ringoid}], 
\\ 
& 
[\text{algebra}]\mapsto[\text{algebroid}], 
& &
[\text{C*-algebra}]\mapsto[\text{C*-category}], 
& & 
\end{align*}
and a \emph{vertical categorification} process, that properly consists in adding further higher-level morphisms as in the following examples:
\begin{align*}
& 
[\text{set}]\mapsto[\text{category}]\mapsto\cdots\mapsto[\text{$n$-category}], 
& & 
[\text{set}]\mapsto[\text{groupoid}]\mapsto\cdots\mapsto[\text{$n$-groupoid}], 
\\
& 
[\text{algebroid}]\mapsto\cdots\mapsto[\text{$n$-algebroid}], 
& & 
[\text{C*-category}]\mapsto\cdots\mapsto[\text{$n$-C*category}]. 
\end{align*}

\medskip 

In very general terms, the efforts presented here can be seen as a first attempt for the development of a full \textit{vertical categorification} of functional analysis and operator algebra, in the same way as the transition from C*-algebras to \hbox{C*-categories} can be a \textit{horizontal categorification} of functional analysis. 

\medskip 

We stress, as a disclaimer, that the main inspiration for our proposed set of C*-categorical axioms stems from the attempt to vertically categorify Gel'fand-Na\u\i mark dualities; in particular it is not our intention here to define ``higher C*-categorical settings'' for a discussion of Tannaka-Krein dualities. 

\medskip 

Before entering into the description of the actual content of the paper, we would like to devote a few more lines of motivation for the reader that might feel quite uncomfortable with the prospect of having to slog through the long series of definitions and preparatory material here below, without some clear indications that this might be worth and justified. 
\begin{itemize}
\item 
Since the relevance of 1 and 2-categorical C*-structures in the formalism of quantum field theory is now indisputable, it is quite natural to investigate if there is any additional role for higher \hbox{C*-categories}. Unfortunately, in the current literature, for now, even basic definitions of these (strict) higher structures are missing and hence we decided to make a first effort in this direction filling the gap (at least for strict C*-categories) and propose tentative definitions. 
A specific motivation is the desire to place the classical works on C*-categories by S.Doplicher-J.E.Roberts into their wider higher-category-theoretic context and to have a more systematic study of the role, and generalization, of the (strict) involutions over objects in 2-C*-categories that already appear in R.Longo-J.E.Roberts~\cite{LR}, P.Zito~\cite{Z}, and more recently in A.Henriques-D.Penneys~\cite{HP}, C.Jones-D.Penneys~\cite{JP}, L.Giorgetti-R.Longo~\cite{GL}, and that also play a role in the theory of representations of quantum groups in the works of C.Pinzari-J.E.Roberts~\cite{PR0}. 
\item 
Suitable (weak) tensor-2-C*-categories are systematically used in conformal field theory (see for example the review by M.Bischoff-R.Longo-Y.Kawahigashi-K.-H.Rehren~\cite{BLKR}) and certain \hbox{3-cat}\-egories have been very recently introduced by A.Bartels-C.L.Douglas-A.Henriquez~\cite{BDH}. At the very minimum, one might want to see if involutions over 1-arrows and over objects play any role there and in which sense higher C*-categorical axioms hold. 
\item 
From the strictly mathematical point of view, it is reasonable to ask (and the answer is far from obvious) if the $n$-categorical structures widely used in the literature (with motivations typically related to topological higher homotopy theory) are compatible and in which way with the further requirements imposed by the operator algebraic world. 
This is very far from being just an academic exercise: the study of C*-algebras is (in a widely accepted way among the practitioners of operator algebras) considered as the study of ``non-commutative topology'' and hence it would only be natural to ask (at least from the point of view of a ``non-commutative topologist'') if and how higher homotopy theory can be formulated for non-commutative spaces and so, dually, in C*-categorical language.\footnote{
The main reasons for the complications encountered are mostly due to the extra ``geometric rigidity'' implicit in C*-algebras, that secretly are (non-commutative) uniform spaces rather than just topological spaces (such input comes directly from recent study of non-commutative extensions of Gel'fand-Na\u\i mark duality for unital C*-algebras). 
}
\item 
The study of dynamical systems, defined as action of groups on topological spaces (and dually on C*-algebras), has been also extended to actions of 2-groups (crossed modules) and \hbox{2-cat}\-egories on C*-algebras, for example in the works of A.Buss-R.Meyer-C.Zhu~\cite{BMZ1,BMZ2}. 
It is unlikely that (fully) involutive 2-categories (as here defined in section~\ref{sec: *}) will not play a significant role in the study of representations of 2-categories on C*-algebras. 

Since $n$-groupoids, for $n>2$ are a quite well-known, in the spirit of vertical categorification, one might explore if such higher groupoids (and more generally fully involutive $n$-categories) have associated ``higher C*-dynamical systems'' and we guess that higher-C*-categories provide an adequate minimal environment for such theories: in particular one might study, in the same spirit of the previous works by R.Buss-R.Meyer-C.Zhu, the role of higher actions of involutive $n$-categories on hyper-C*-algebras. (work is ongoing on these topics). 
\item 
In previous works~\cite{BCL2,BCL4}, we have already examined a horizontal categorification (oidification) of Gel'fand-Na\u\i mark duality, where commutative C*-algebras are replaced by commutative full C*-categories. It is perfectly justified to ask if higher categories might add further information and if such duality survives (in which form) a vertical categorification.\footnote{Although in this paper, we will not enter into the discussion of spectral theory for strict higher C*-categories, under similar commutativity and fullness conditions, one can perform a vertical categorification of Gel'fand-Na\u\i mark duality using ``higher spaceoids'' that are just special one-dimensional examples of the $n$-Fell bundles here defined.} 
\item 
Currently there are several lines of development in algebraic quantum field theory where the \hbox{``$n$-categorical language''} starts to appear explicitly in attempts to formulate local gauge theories, either via ``net cohomology'' in the many works by J.E.Roberts-G.Ruzzi-E.Vasselli~\cite{R,R2,RR,RRV,RRV2}; via ``operads'' in the recent studies by M.Benini-A.Schenkel-L.Woike~\cite{BSc,BSW1}. It is a safe bet that higher involutions and higher C*-categories (possibly in their future ``weak-versions'') will provide useful technical ingredient in such investigations. 

In section~\ref{sec: *} of present work, we propose a full vertical categorification of the notion of dagger category; these (strict) involutive higher categories, as a close generalization of strict higher groupoids, should provide a wider playground for higher gauge theory (see for example J.Baez-J.Huerta~\cite{BHu}, U.Schreiber~\cite{Sc4}) and ``(higher) transports'' (see U.Schreiber-K.Waldorf~\cite{SW}). 
\end{itemize} 

A much more speculative, but quite deep motivation, for the study of fully involutive higher \hbox{C*-cat}\-e\-gories (very likely in their future weak incarnations, see~\cite{BB}) should come from ``(extended) functorial quantum field theories'' and ``homotopy/homology theoretic approaches'' to quantum field theory, especially if we desire to formalize a theory based on ``non-commutative geometrical entities''. 

\medskip 

The present functorial approaches to quantum field theory, either via topological quantum field theory by M.Atiyah~\cite{At} (see also the review by F.Quinn~\cite{Q}) or via conformal field theory by G.Segal~\cite{Seg} (see also A.Henriques~\cite{He0}), including their extended higher-categorical versions (see for example D.Freed~\cite{Fr}, J.Baez-J.Dolan~\cite{BD1}, S.Stolz-P.Teichner~\cite{ST} and J.Lurie~\cite{Lu}) and their possible utilization in quantum gravity (as suggested by L.Crane~\cite{Cra2}, J.Baez~\cite{Ba}, J.Morton~\cite{Mo}), are all based on the existence of suitable (higher) \textit{quantization} functors, from \textit{classical geometric} categories of (higher) cobordisms of certain manifolds, to \textit{quantum algebraic} categories of morphisms of (higher) Hilbert spaces. 

\medskip 

In a rather similar way, all the recent categorical reformulations/generalizations of algebraic quantum field theory, as in~R.Brunetti-K.Fredenhagen-R.Verch~\cite{BFV} (see U.Schreiber~\cite{Sc0} for relations between the algebraic and the functorial approach) are considering functors defined on categories of \textit{geometric spaces} (usually isometries of globally hyperbolic Lorentzian manifolds)\footnote{Some categories of non-commutative geometric spaces have actually been considered by M.Paschke-R.Verch~\cite{PV}.} with values in categories of (usually unital $*$-homomorphisms of) \textit{operator algebras}. 

\medskip 

\begin{itemize}
\item 
In both cases, the study of the intrinsic features of higher (involutive) operator algebraic categories should be of crucial relevance in order to produce suitable target categories for the above (higher) functors,\footnote{Extended functorial quantizations will have as target higher categories of representations of $n$-C*-categories.} avoiding the ``extra commutativity assumptions'', implicit in the specification of classical geometrical (higher) categories of spaces and cobordisms, that might impose too restrictive requirements on the algebraic/quantum target-side of such functors. The latter is a very concrete danger in any rigorous approach to quantum field theory, and especially quantum gravity, if the nature of space-time must be non-commutative (as often suggested for the purpose to eliminate singularities and divergences). 

\medskip 

In particular, current ``stabilization hypotheses'' in functorial quantization, based on Eckmann-Hilton's argument, as suggested in J.Baez-J.Dolan~\cite[section~5]{BD1}, might turn out to be too strong (the non-commutative exchange property, here proposed in section~\ref{sec: hc}, should make it easier to consider ``non-commutative cobordisms'').  
\item 
Starting from the pioneering work of J.Baez-J.Dolan~\cite{BD1}, there are strong indications that (higher) involutions play an essential role in characterizing the \textit{quantum} higher target-categories of an extended quantization functor and, 
the efforts here presented in section~\ref{sec: *} (at least in the case of strict involutions, instead of dualities) might constitute a first modest contribution in the study and understanding of such higher involutions, as explicitly invoked several years ago by J.Baez-M.Stay~\cite[section~2.7]{BS},~\cite{Ba}. 
\item 
As already suggested in ``modular algebraic quantum gravity''~\cite[section~6.3, page~32]{BCL1}, contrary to the usual assumptions of functorial and algebraic quantum field theories, we are not looking for a (higher) functor from categories of \textit{commutative geometries} to categories of \textit{quantum observables}, but rather, in the reverse direction, we are trying recover geometries (likely non-commutative) from (higher) categories of quantum operational data and we need to define a \textit{``spectral'' functorial quantum field theory}  from ``C*-operator categories'' to ``geometry''. 
In this case, the Eckmann-Hilton collapse might essentially suppress important classes of higher operator categories and all non-trivial higher morphisms of non-commutative spaces. 
\end{itemize}

As further stressed in the outlook section, the full ``justification'' of this work (and of the specific axioms proposed for ``quantum'' strict higher C*-categories) is not only coming from the existence of some non-trivial examples (although that would probably be already sufficient); but from more substantial ideological inter-related requests concerning the investigation of the nature of ``morphisms of non-commutative spaces'', the formalization of an operational ``relational quantum systems theory'' and from the above-mentioned ongoing attempts in the direction of ``modular algebraic quantum gravity''. 

\begin{itemize}
\item 
Most of the current notions of morphism of non-commutative spaces (usually consisting of suitable bimodules, possibly equipped with additional structures) are extremely ``rigid'', and hence somehow not completely satisfactory (especially when one compares with the extreme ``morphic-freedom'' available in the case of cobordisms between usual manifolds).  
To address this issue, we put forward (see section~\ref{sec: app}), some conjectural ideas (motivated by recent spectral results on Gel'fand-Na\u\i mark duality for non-commutative C*-algebras) that support the claim that a treatment of sufficiently general classes of morphisms of non-commutative spaces, might require the introduction of ``higher-bimodules'' i.e.~``representation spaces'' of higher-C*-categories and hyper-C*-algebras, similar to those here introduced. 
\end{itemize}

``Categorical quantum theory'' has been quite successfully developed, in a long series of works by S.Abramsky-B.Coecke-C.Heunen~\cite{AC,AC2,AH}, P.Selinger~\cite{S} and their many collaborators, using the framework of compact symmetric monoidal dagger categories (or, as in J.Vicary~\cite{Vi}, certain symmetric monoidal \hbox{2-categories} motivated by J.Baez's 2-Hilbert spaces~\cite{Ba1}). 
In this approach, quantum processes (channels) and their compositional 2-categorical structure are primary. 

\begin{itemize}
\item 
One of us has proposed the still rather speculative hypothesis that a mathematical formalization of relational quantum theory, along the lines suggested by C.Rovelli~\cite{Rov}, might be achieved via higher C*-categories~\cite{B}.

The usage of C*-categories in foundations of quantum theory and operational quantum theory might be considered suspicious (since a C*-algebra, via Gel'fand-Na\u\i mark representation theorem, implicitly contains all the mathematical ingredients that allow to reconstruct the usual Hilbert space picture of quantum mechanics, and the formalism of C*-algebras has never been completely justified on clear operational grounds~\cite{Str}). Anyway, on the basis of current work on non-commutative Gel'fand-Na\u\i mark duality, we have good reasons now to assert (see the \textit{spectral conjecture} put forward in the outlook section~\ref{sec: app}) that non-commutative C*-algebras can be operationally motivated, via ``convolutions algebras of certain relations'' between spectra of observables, following original suggestions by W.Heisenberg and J.Schwinger, as further elaborated by A.Connes~\cite[chapter~1, section~1]{C} (see also the very recent work by  F.M.Ciaglia-A.Ibort-G.Marmo~\cite{CIM}). Higher categorical levels might become relevant as soon as ``nested chains of observers'' are allowed and (as in relational quantum theory) states are observer-dependent. 
\end{itemize}

Natural classes of non-trivial examples usually help to support the introduction of new axioms and, in the case of quantum higher C*-categories with their convolution hyper-C*-algebras, we can easily provide them:  

\begin{itemize}
\item 
Whenever a Hilbert space factorizes $\H=\bigotimes_{\lambda\in\Lambda}\H_\lambda$, the topological algebra $\Ks(\H)$ of compact operators factorizes as well $\Ks(\H)=\bigotimes_{\lambda\in\Lambda}\Ks(\H_\lambda)$ and naturally becomes equipped with several mutually compatible multiplications, involutions and norms. Although such Hilbert factorizations are quite rare in algebraic quantum field theory (split inclusions), they are quite ubiquitous in quantum information theory (where $\H$ is often finite dimensional) and produce a huge family of examples of ``hyper-C*-algebras'' and ``higher C*-categories'' of hypermatrices that, in section~\ref{sec: hC*}, will be introduced and studied in detail.\footnote{These natural examples of strict higher C*-categories have been considered in the very early stages of this research~\cite{Su} and they were erroneously discarded, apart from the trivial commutative cases, exactly because they generally failed to satisfy the familiar exchange property for higher categories. 
} 
\item 
Standard examples, from ``representation theory'', are obtained considering any one-dimensional (fully involutive) higher C*-category in place of the usual field of complex numbers and defining higher Hilbert spaces as higher \hbox{C*-modules} over them (exactly as usual Hilbert spaces are \hbox{C*-modules} over the C*-algebra of complex numbers) and finally studying the higher \hbox{C*-cat}\-egories of ``endomorphisms'' of such Hilbert higher C*-modules and the vertical categorification of Gel'fand-Na\u\i mark representation theorem. Work in this direction is under development. 
\end{itemize}

\medskip 

We proceed now to describe in some detail the content of the paper. 

\medskip 

In section~\ref{sec: C*}, we briefly recall the main C*-algebraic definitions and results that constitute the background for our work. Here the reader who is not already familiar with operator algebras will find a detailed definition of C*-algebras, their horizontal categorification (C*-categories) and their ``bundlified'' generalizations (Fell bundles also on general \textit{involutive categories}) as well as previously available definitions of monoidal (tensor) C*-category (Doplicher-Roberts) and 2-C*-category (Longo-Roberts).  

\medskip 

In section~\ref{sec: hc} the standard notions of strict globular higher $n$-category are introduced making use of ``partial $n$-monoids'', an equivalent definition in term of properties of $n$ composition operations ($\circ_0,\dots,\circ_{n-1}$) partially defined on a family of $n$-cells. The Eckmann-Hilton collapse argument is presented in detail, explaining how it prevents any inclusion of non-commutative ``diagonal home-sets'' at depth higher than~$1$.  
In order to avoid this fatal degeneration (that is ultimately responsible for the lack of reasonable examples of higher C*-categories that exhibit non-commutative features), we propose here to relax the exchange property and substitute it with a weaker condition of left/right $\circ_p$-functoriality of the compositions by $\circ_q$-identities, for $0\leq q<p < n$. We are fully aware of the fact that this modified ``non-commutative'' exchange property is not fitting with most of the current developments in higher category theory, but we stress that ultimately its relevance in higher category theory will be vindicated by the abundance of quite natural examples available. In this same section, for later use, we also discuss examples of strict $n$-categories (mainly Cartesian products of 1-categories) whose $n$-cells naturally admit compositions that do not fit with the usual globular or cubical picture of strict higher $n$-categories now available: relaxing the exchange property not only allows more non-commutativity for the compositions, but also more freedom in the ``composability'' of cubical $n$-cells. 

\medskip  

In section~\ref{sec: *}, we describe a full vertical categorification of P.Selinger's dagger categories, via strict involutions defined as endo-functors that can be covariant or contravariant with respect to any of the partial compositions of a strict globular $n$-category. This is not the only way to introduce notions of ``duality'' for $n$-cells, but it is in perfect agreement with the tradition of J.E.Roberts' $*$-categories, where involutions are treated on the same footing as compositions. 
The resulting notion of a (partially/fully) involutive higher category should be interesting on its own. 
A much more detailed study of higher involutions for globular and cubical $n$-categories appears in our companion paper~\cite{BCM}.  

Although the introduction of involutions with mixed covariance properties might seem to invalidate the non-commutativity gained via the relaxed exchange property (see remark~\ref{rem: obs}), its effects still allow the existence of non-trivial non-commutative examples as long as the ``diagonal home-sets'' are equipped with ``more products/involutions'' as will be described in section~\ref{sec: hC*} (see proposition~\ref{prop: obs} and theorem~\ref{th: obs}). 

\medskip 

The definition of strict higher (globular) C*-categories rests on several additional pieces of structure that are considered in section~\ref{sec: hC*}.
As the first step, we define higher $*$-algebroids (of minimal depth) introducing complex linear structures on each family of globular $n$-arrows with a common \hbox{$(n-1)$-sources/targets} and imposing conditions of bilinearity for compositions and conjugate-linearity for involutions. 
This is just the easiest form of vertical categorification of the usual notion of $*$-category used by J.E.Roberts (and later reconsidered by P.Mitchener): in principle (as already suggested in the axioms presented in~\cite{BCL3}) one might provide, for all $0\leq p<n$, completely different linear structures on the sets of $n$-arrows with common source/targets at depth-$p$; for simplicity we decided to avoid here this further generalization, that will be discussed in more details elsewhere. 
Next, if a Banach norm is placed on each of the previous linear spaces one can impose suitable axioms of submultiplicativity for the compositions $\circ_0,\dots,\circ_{n-1}$ and \hbox{C*-norm} and positivity conditions for some or for all pairs $(\circ_p,*_p)$ of depth-$p$ composition/involution. 
In this way one obtains a vertical categorification of the notion of C*-category (of minimal linear depth) that can in principle be fully involutive and that   also generalizes Longo-Roberts 2-C*-categories, where only the $*_{n-1}$ involution is present. A definition of $n$-Fell bundle is easily obtained, whenever the pair $n$-groupoid $\Cs/\Cs$ of linear spaces in the strict globular fully involutive $n$-C*-category $\Cs$ is replaced by a more general fully involutive $n$-category. 

In order to provide many non-trivial examples of fully involutive strict higher C*-categories, always in 
section~\ref{sec: hC*}, we look at the usual algebra $\MM_{N\times N}(\CC)$ of square complex matrices of order $N$ as an algebra of sections of a Fell line-bundle over the pair groupoid of a set of $N$ points and we simply substitute $\CC$ with an arbitrary (possibly non-commutative) unital C*-algebra $\As$ and $N\times N$ with an arbitrary finite discrete (fully) involutive $n$-category $\Xs$. 
The Cartesian bundle $\Xs\times\As$ is a natural example of a strict globular $n$-category (see theorem~\ref{th: convo}) and the non-commutative exchange property is absolutely necessary to give ``citizenship rights'' to the structure in the case of non-commutative algebras $\As$. 
Whenever $\As$ is a commutative C*-algebra, $\Xs\times\As$ becomes an $n$-Fell bundle (a fully involutive $n$-C*-category, when $\Xs$ is a $n$-groupoid); unfortunately (as explained in remark~\ref{rem: obs} and proposition~\ref{prop: obs}) this result cannot hold for the case of non-commutative C*-algebras $\As$, but it can be recovered (see~theorem~\ref{th: obs}) with a more complex system of non-commutative coefficients in place of the C*-algebra $\As$.    

The resulting family of sections $\MM_\Xs(\As)$, the ``enveloping $n$-convolution algebra'' of the $n$-Fell bundle 
$\Xs\times \As$, is the first example of what we call a hyper-C*-algebra: a complete topolinear space equipped with $N$ different C*-algebraic structures $(\circ_p,*_p,\|\cdot\|_p)$, whose norms are equivalent. 
We finally provide further natural examples of such hyper-C*-algebras, via nested hypermatrices and we also show how these 
hyper-C*-algebras can be seen as higher-convolution algebras \dots\ as long as we allow cubical sets (in place of globular sets) and we consider, over the set of $n$-cells, $2^n$ different pairs $(\circ_p,*_p)$, $p=0,\dots,n-1$, of composition/involution. This is quite a strong hint for the relevance of higher categorical constructs that do not find place in present-day axiomatizations of $n$-categories and where the relevance of our non-commutative exchange property is even more evident. 

\medskip 

In section~\ref{sec: app} we informally discuss some wilder speculations on the possible applications of the formalism of non-commutative  higher C*-categories to non-commutative geometry and quantum theory. 
A quite strong motivation for the consideration of higher C*-categories comes from the need to formulate general categorical environments for non-commutative geometry. Morphisms between usual ``commutative'' spaces are given by families of 1-arrows (a relation or more generally a 1-quiver) connecting points of the spaces, so that ``dually'' a morphism corresponds to a bimodule over the commutative algebra of functions over the graph of a relation. In that ``classical'' context, as suggested in~\cite{BJ}, there is no problem at all in performing a vertical categorification. On the contrary,  vertical categorifications of morphisms between non-commutative spaces (dually described by bimodules over non-commutative algebras) are quite difficult to achieve, since the usual exchange property imposes strong commutativity conditions. Taking inspiration from our previous work on the spectral theorem for commutative full C*-categories~\cite{BCL2}, we are led to think of the spectrum of a non-commutative algebra as a ``family'' of Fell line-bundles  (spaceoids), so that morphisms of non-commutative spaces appear to be naturally described by 2-quivers with a cubical structure, and hence dually, by suitable higher bimodules. 

\medskip 

Since non-commutative spaces (in the language of A.Connes' spectral triples) are essentially very specific quantum dynamical systems, it does not come as a surprise that higher operator category theory becomes relevant in the description of ``quantum channels'' and ``correlations'' between quantum systems (at least when these are described in the language of algebraic quantum theory as C*-algebras). 
Actually, since the very beginning of this investigation in higher C*-category theory, the mathematical formalization of relational quantum theory has been one of the basic goals of our research in view of its potential impact on our ongoing efforts in modular algebraic quantum 
gravity~\cite{modular,B}. 

\medskip 

We finally collect in section~\ref{sec: o} some further indication on possible extensions of this work, also in directions that we plan to explore in the future. 

\section{C*-algebras and C*-categories}\label{sec: C*}

The theory of operator algebras (see for example B.Blackadar~\cite{Bl} for an overview of the subject and furher references) is a quite developed area of functional analysis with extremely important applications to the mathematical approaches to quantum theory (see for example the books by F.Strocchi~\cite{Str}, R.Haag~\cite{H}, G.Emch~\cite{Em}, O.Bratteli-D.Robinson~\cite{BR} and J.E.Roberts' lectures~\cite{R2,R3}). Since our main purpose is to examine some possible routes for a vertical categorification of such a theory (with some non-trivial examples), we start here with a brief review, recalling the basic notion of C*-algebra, its horizontal categorified and ``bundlified'' versions (C*-categories and Fell bundles), as well as the few instances of already available axioms for monoidal and 2-C*-categories. 

The readers that are not already familiar with the notions of category theory mentioned here, will find all the references and required definitions in detail in the following section~\ref{sec: hc}. 

\subsection{C*-algebras, C*-categories, Fell Bundles, Spaceoids}  

C*-algebras, originally defined by I.Gel'fand-M.Na\u\i mark~\cite{GN}, are the most basic gadget in the theory of operator algebras and non-commutative geometry~\cite{Co}, where they play the role of non-commutative topological spaces and it is natural to start from them in any attempt to categorify functional analysis. 

A C*-algebra is a rigid blend of algebraic and topological structures: an associative algebra over $\CC$, equipped with an antimultiplicative conjugate-linear involution, that is at the same time a Banach space with a norm that is submultiplicative and satisfies the so called C*-property. 
\begin{definition}
A \emph{complex unital C*-algebra} $(\Cs,\circ,*,+,\cdot, \|\cdot \|)$ is given by the following data: 
\begin{itemize}
\item 
a complex associative unital involutive algebra i.e.~a complex vector space $(\Cs,+,\cdot)$ over $\CC$, equipped with an associative unital bilinear multiplication $\circ:\Cs\times\Cs\to\Cs$ and conjugate-linear antimutiplicative involution $*:\Cs\to\Cs$, 
\item 
a norm $\|\cdot \|:\Cs\to\RR$ such that the following properties are satified: 
\begin{itemize}
\item
completeness: $(\Cs,+,\cdot,\| \cdot \|)$ is a Banach space, 
\item
submultiplicativity of the norm: $\|x\circ y\| \leq \|x\| \cdot \|y\|$ for all $x,y\in \Cs$, 
\item
C*-property: $\|x^*\circ x\|=\|x\|^2$ for all $x\in \Cs$.  
\end{itemize} 
\end{itemize}
\end{definition}
Basic non-commutative examples are the families $\Bs(\H)$ of linear continuous maps on a Hilbert space $\H$ (and all the norm-closed unital involutive subalgebras of them); commutative examples are essentially algebras $C(X;\CC)$ of complex-valued continuous functions on a compact Hausdorff topological space $X$. 

\medskip 

Horizontal categorifications of C*-algebras have been developed a long time ago by J.E.Roberts and used in the theory of superselection sectors in algebraic quantum field theory. The formal definition first appeared in P.Ghez-R.Lima-J.E.Roberts~\cite{GLR} and  it has been revisited more recently in greater details in P.Mitchener~\cite{M}: 
\begin{definition}
A \emph{C*-category} $(\Cs,\circ,*,+,\cdot,\| \cdot \|)$ is given by the following data: 
\begin{itemize}
\item 
an involutive algebroid $(\Cs,\circ, *,+,\cdot)$ over $\CC$:
\begin{itemize}
\item
a category $(\Cs,\circ)$, with objects (partial identities) $\Cs^0\subset \Cs$, 
\item  
a contravariant functor $*:\Cs\to\Cs$ acting trivially on $\Cs^0$, 
\item 
for all pairs of objects $A,B\in \Cs^0$, a complex vector space structure $(\Cs_{AB},+,\cdot)$ on the home-sets $\Cs_{AB}:=\Hom_\Cs(B,A)$, 
on which the composition $\circ:\Cs_{BC}\times\Cs_{AB}\to\Cs_{AC}$, $(y,x)\mapsto x\circ y$ is bilinear and the involution $*:\Cs_{AB}\to\Cs_{BA}$, $x\mapsto x^*$ is conjugate-linear,   
\end{itemize}
\item 
a norm function $\| \cdot \| :\Cs\to \RR$ such that: 
\begin{itemize}
\item 
completeness: $(\Cs_{AB},+,\cdot)$ are Banach spaces, $\forall A,B\in \Cs^0$, 
\item 
submultiplicativity: $\|x\circ y\|\leq\|x\|\cdot\|y\|$ whenever $x\circ y$ exists, 
\item 
C*-property: $\|x^*\circ x\|=\|x\|^2$ for all $x\in\Cs$, 
\item
positivity: for all $x\in \Cs$, the element $x^*\circ x$ is positive in the unital C*-algebra $\Cs_{s(x)s(x)}$, where $s(x)\xrightarrow{x}t(x)$.  
\end{itemize}
\end{itemize}
\end{definition}

\begin{remark}
The axiom of positivity, in the case of C*-algebras, is redundant. 

In the statement of this positivity property, we make use of the fact that $\Cs_{s(x)s(x)}$ is a unital C*-algebra, for all $x\in \Cs$, where $s(x)$ denotes the source partial identity of the element $x$. 

In fact it is immediately implied by the definition that, for all objects $A,B\in \Cs^0$, the diagonal home-sets 
$\Cs_{AA}$ are unital C*-algebras and the off-diagonal home-sets $\Cs_{AB}$ are unital Hilbert C*-bimodules, over the C*-algebra $\Cs_{BB}$ to the right, and over the C*-algebra $\Cs_{AA}$ to the left, with right and left inner products given respectively by ${}_\bullet\ip{x}{y}:=x\circ y^*$ and $\ip{x}{y}_\bullet:=x^*\circ y$ that satisfy the associative property ${}_\bullet\ip{x}{y}z=x\ip{y}{z}_\bullet$, for all $x,y,z\in \Cs$. 
\xqed{\lrcorner}
\end{remark} 

As we can expect from horizontal categorification, a C*-algebra is just a C*-category whose class of objects contains only one element.  
Basic examples of C*-categories are provided by the family $\Bs(\Hf)$ of linear bounded operators between Hilbert spaces belonging to a given class 
$\Hf$ (every C*-category can be seen as a norm-closed unital involutive sub-algebroid of $\Bs(\Hf)$ for a given family $\Hf$). 

\medskip 

A C*-category $\Cs$ can immediately be seen as a bundle, with Banach fibers $\Cs_{AB}$, over the pair groupoid 
$\Cs^0\times\Cs^0:=\{AB\ | \ A,B\in \Cs^0\}$ of its objects with the discrete topology. 
Allowing more than a single arrow connecting two objects $A,B$ of the base category and adding the possibility of a non-trivial topology, leads to the definition of a Fell bundle, that plays a fundamental role in spectral theory (in a way that further elaborates on the tradition of the celebrated Dauns-Hofmann theorem~\cite{DH}). 

\begin{definition}
A \emph{Banach bundle}\footnote{See, for example, J.Fell-R.Doran~\cite[Section~I.13]{FD} or N.Weaver~\cite[Chapter~9.1]{W} and the references therein.} is a bundle $(\Es,\pi,\Xs)$, i.e.~a continous open surjective map $\pi:\Es\to\Xs$, whose total space is equipped with:
\begin{itemize}
\item
a partially defined continuous binary operation of addition $+:\Es\times_\Xs\Es\to\Es$, with domain the set 
$\Es\times_\Xs\Es:=\{(x,y)\in \Es\times\Es \ | \ \pi(x)=\pi(y)\}$, 
\item 
a continuous operation of multiplication by scalars $\ \cdot:\KK\times\Es\to\Es$, 
\item 
a continuous ``norm'' $\|\cdot\|:\Es\to\RR$, such that:
\begin{itemize}
\item
for all $x\in \Xs$, the fiber $\Es_x:=\pi^{-1}(x)$ is a complex Banach space $(\Es_x,+,\cdot)$ with the norm $\|\cdot\|$, 
\item 
for all $x_o\in \Xs$, the family $U_{x_o}^{\Os,\epsilon}=\{e\in \Es \ | \ \|e\|<\epsilon,\ \pi(e)\in \Os\}$, where $\Os\subset \Xs$ is an open set containing $x_o\in\Xs$ and $\epsilon>0$, is a fundamental system of neighborhoods of 
$0\in E_{x_o}$.  
\end{itemize}
\end{itemize} 
A \emph{Hilbert bundle} is a Banach bundle whose norm is induced fiberwise by inner products. 

\medskip 

A \emph{Fell bundle}\footnote{For Fell bundles over topological groups see J.Fell~\cite[Section~II.16]{FD}; for Fell bundles over groupoids (originally introduced by S.Yamagami) see A.Kumjian~\cite{K}; for Fell bundles over inverse semigroups (defined by N.Seiben) see R.Exel~\cite[Section~2]{Ex}; Fell bundles over involutive inverse categories (involutive categories $\Xs$ such that $x\circ x^*\circ x=x$ for all $x\in \Xs$) appeared in~\cite{BCL4}.} 
over a topological involutive category $\Xs$, is a Banach bundle $(\Es,\pi,\Xs)$ that is also an involutive categorical bundle, i.e.~$\pi:\Es\to\Xs$ is a continuous $*$-functor between topological involutive categories $\Es,\Xs$, and such that: 
\begin{itemize}
\item
$\|x\circ y\|\leq \|x\|\cdot \|y\|$ for all composable $x,y\in \Es$, 
\item 
$\|x^*\circ x\|=\|x\|^2$ for all $x\in \Es$, 
\item 
$x^*\circ x$ is positive whenever $\pi(x^*\circ x)$ is an idempotent in $\Xs$.\footnote{The condition is meaningful because the fiber $\Es_{\pi(x^*\circ x)}\subset \Es$ is a C*-algebra if and only if $\pi(x^*\circ x)\in \Xs$ is an idempotent.} 
\end{itemize}
\end{definition}

\begin{remark}
The positivity condition in the previous definition requires some care: the axioms preceding it already imply that every fiber $\Es_p$ is a C*-algebra, whenever $p\in \Xs$ is an idempotent in the involutive category $\Xs$, hence it is perfectly possible to require the positivity of $x^*\circ x$ if it belongs to such a fiber (this is the usual condition in the case of Fell bundles over groupoids and C*-categories). 

It might seem suspicious that no additional positivity requirement is necessary for an arbitrary $x\in \Es$. Since $\Es_{\pi(x^*\circ x)}$ is generally only a Hilbert C*-bimodule, the only reasonable option would be to ask the positivity of $x^*\circ x$ as an element of a suitable \emph{convolution C*-algebra} ``generated'' by $\Es_{\pi(x^*\circ x)}$. The positivity axiom in the previous definition of Fell bundle is a necessary condition for the existence of such a C*-algebra; anyway, if such a C*-algebra exists, all the elements $x^*\circ x$ would always be already positive, making further requirements redundant.  

Although we will not enter here into this very interesting topic, using a variant of the construction of the C*-algebra of multipliers via double centralizers, it is actually possible to show that convolution C*-algebras for fibers of a Fell bundle (as defined here) always exist. 
\xqed{\lrcorner}
\end{remark}

As already anticipated, a C*-category $(\Cs,\circ,*,\|\cdot\|)$ is itself a special case of a Fell bundle over the pair groupoid $\Cs^0\times\Cs^0$ with the discrete topology and with fibers $\Cs_{AB}$, for $(A,B)\in \Cs^0\times\Cs^0$.  

Other elementary examples of Fell bundles (over groupoids) are given by the ``tautological'' bundles with base any strict groupoid of imprimitivity Hilbert C*-bimodules (in the category of strong Morita equivalences of complex unital C*-algebras), with fibers the Hilbert C*-bimodules themselves. Other notable examples of Fell bundles are given by \emph{spaceoids}: spectra of commutative full C*-categories defined, used and studied in~\cite{BCL2,BCL4}. 

\medskip 

The relevance of these structures for spectral theory can be fully appreciated considering the following theorem by A.Takahashi~\cite{T1,T2} (originally proved via the Dauns-Hofmann theorem), that simultaneously subsumes the Gel'fand-Na\u\i mark duality and (the Hermitian version of) the Serre-Swan equivalence,\footnote{Although A.Takahashi does not directly treat tensor products, the bicategorical version is immediate (see~\cite{BCL1,BCL4}).} and from the horizontal categorification of the Gel'fand-Na\u\i mark duality described in~\cite{BCL2}. 

\begin{theorem}[Takahashi~\cite{T2}]
There is a duality between the bicategories:\footnote{With operations given by composition and tensor product.} 
\begin{itemize}
\item
$\Cf$ of homomorphisms of Hilbert C*-modules over commutative unital C*-algebras,  
\item 
$\Sf$ of Takahashi morphisms of Hilbert bundles over compact Hausdorff spaces. 
\end{itemize} 
\end{theorem}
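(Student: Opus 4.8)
The plan is to realise the asserted duality as the bicategorical enhancement of two classical facts, glued together along coherent comparison isomorphisms: Gel'fand--Na\u\i mark duality governs the $0$-cells, while a Serre--Swan/Takahashi-type section functor governs the $1$- and $2$-cells. First I would fix the two candidate contravariant pseudofunctors. One direction sends a compact Hausdorff space $X$ to the commutative unital C*-algebra $C(X;\CC)$ and a Hilbert bundle $(\Es,\pi,X)$ to its module $\Gamma(X;\Es)$ of continuous sections, made into a Hilbert $C(X;\CC)$-module by the pointwise action and the inner product $\ip{s}{t}(x):=\ip{s(x)}{t(x)}$; on arrows it is covariant on fibres and contravariant on base maps. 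The reverse direction sends a commutative unital C*-algebra $A$ to its Gel'fand spectrum $\Sp(A)$ and a Hilbert $A$-module $M$ to the bundle whose fibre over a character $x$ is the completion of $M/\overline{(\ke x)\cdot M}$. The left action by a second algebra that a $1$-cell carries (making it a bimodule) is matched on the geometric side by the covering data packaged into a Takahashi morphism, so that the two operations named in the statement---horizontal composition and tensor product---are defined on both sides.

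I would then verify the equivalence degree by degree. On $0$-cells, Gel'fand--Na\u\i mark supplies the natural isomorphisms $A\cong C(\Sp(A);\CC)$ and $X\cong\Sp(C(X;\CC))$. On $1$- and $2$-cells the content is Takahashi's theorem proper: that $\Gamma(X;-)$ is an equivalence between Hilbert bundles over $X$ and Hilbert $C(X;\CC)$-modules. Full faithfulness---bundle morphisms versus module homomorphisms---is the comparatively direct half: a morphism covering a base map is determined by its fibrewise components, and these assemble into a continuous family precisely when the induced map on sections is $C(X;\CC)$-linear and compatible with the inner products, which is exactly the Takahashi condition.

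Essential surjectivity is where the real work lies, and I expect it to be the main obstacle. Starting from an abstract Hilbert $A$-module $M$ with $A\cong C(X;\CC)$, I would define the fibre over $x\in X$ to be $M_x:=M/\overline{(\ke x)\cdot M}$; the key preliminary lemma is that $s\in\overline{(\ke x)\cdot M}$ if and only if $\ip{s}{s}(x)=0$, so that $M_x$ carries the genuine Hilbert norm $\|[s]\|^2=\ip{s}{s}(x)$. A pleasant feature of the commutative-coefficient case is that $x\mapsto\|s(x)\|$ is then automatically continuous, being the square root of the continuous function $\ip{s}{s}\in C(X;\CC)$; this is what makes the sets $U_{x_o}^{\Os,\epsilon}$ a fundamental system of neighbourhoods and turns $\bigsqcup_x M_x$ into a Hilbert bundle $\Es$ in the sense of the definition above. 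The remaining substance is a local-global step using partitions of unity on the compact space $X$: one shows that the elements of $M$ exhaust the continuous sections of $\Es$ and that $M$ is already complete in the section norm, whence $\Gamma(X;\Es)\cong M$. It is precisely here that no local triviality is assumed and the continuous-field formalism (rather than vector bundles) is indispensable, the fibre dimension being allowed to vary from point to point; the original route through the Dauns--Hofmann theorem organises exactly this reconstruction.

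Finally I would promote these degreewise equivalences to a duality of bicategories by checking compatibility with horizontal composition and tensor product. Horizontal composition of $1$-cells is the interior tensor product of Hilbert bimodules on the algebraic side, which I expect to correspond to the fibrewise completed tensor product of Hilbert bundles; I would construct natural comparison isomorphisms $\Gamma(X;\Es)\otimes_{C(X;\CC)}\Gamma(X;\Ff)\cong\Gamma(X;\Es\otimes\Ff)$ together with unit comparisons coming from the trivial line bundle. The last task is bookkeeping: verifying that these comparison $2$-cells are coherent with the associators and unitors on both sides, so that the pair of pseudofunctors forms an adjoint biequivalence. Compared with the essential-surjectivity step this coherence check, though lengthy, is routine.
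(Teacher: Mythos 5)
Be aware first that the paper offers no proof of this statement: it is imported as Takahashi's theorem (citing [T1,T2]), with the parenthetical remark that it was ``originally proved via the Dauns-Hofmann theorem'', and with a footnote delegating the bicategorical (tensor-product) enhancement to [BCL1,BCL4] as ``immediate''. So your proposal must be compared with the cited route rather than with an in-text argument, and against that benchmark it is correct but genuinely different. Where Takahashi obtains the sectional representation of a Hilbert $C(X;\CC)$-module from the Dauns-Hofmann machinery in one stroke, you rebuild the bundle by hand: fibres $\Ms_x:=\Ms/\overline{(\ke x)\cdot\Ms}$, the key lemma that $s\in\overline{(\ke x)\cdot\Ms}$ iff $\ip{s}{s}(x)=0$ (provable with an approximate unit of $\ke x$; note it also shows the quotient norm already equals the Hilbert norm, so no further completion of the fibres is needed), automatic continuity of $x\mapsto\|s(x)\|$ because $\ip{s}{s}\in C(X;\CC)$ --- which is exactly what makes the field continuous rather than merely upper semicontinuous --- and the partition-of-unity argument giving the isometric isomorphism $\Ms\cong\Gamma(X;\Es)$ (isometric embedding with complete, hence closed, image; density by a finite-cover argument on the compact base). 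Every step you indicate does go through; your route buys self-containedness and makes visible precisely where commutativity and compactness enter, while the Dauns-Hofmann route buys brevity and spares one from hand-building the topology on the total space of the field. One point of care before the final coherence check: the bookkeeping should follow the paper's conventions, in which the 1-cells of $\Cf$ are pairs $(\phi,\Phi)$ with $\Phi$ adjointable and $\phi$-linear, and the dual Takahashi morphism $(f,F)$ has $f=\Sp(\phi)$ running contravariantly with $F$ defined on the pull-back $f^\bullet(\Fs)$; your ``bimodule'' phrasing for 1-cells should be recast in this form, after which the compatibility of interior tensor products with fibrewise tensor products of bundles is, as you say, routine.
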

Morphisms in $\Cf$ are pairs $(\phi,\Phi):{}_\As\Ms\to {}_\Bs\Ns$ with $\Phi:\Ms\to\Ns$ adjointable map of Hilbert C*-modules and $\phi:\As\to\Bs$ a unital $*$-homorphism such that 
$\Phi(a\cdot x)=\phi(a)\cdot\Phi(x), \quad \forall a\in \As$, $x\in \Ms$. 

\medskip 

Morphisms in $\Sf$ are pairs $(f,F): (\Es,\pi,X)\to(\Fs,\rho,Y)$, where $f:X\to Y$ is a continuous map and $F:f^\bullet(\Fs)\to\Es$ is a continuous fiberwise-linear map of Hilbert bundles over $X$ defined on the total space $f^\bullet(\Fs)$ of the $f$-pull-back  of the Hilbert bundle $(\Fs,\rho, Y)$. 

\begin{theorem}[Bertozzini, Conti, Lewkeeratiyutkul~\cite{BCL2}] \label{th: bcl}
There is a duality, via horizontally categorified Gel'fand and evaluation natural transformations, between the categories: 
\begin{itemize}
\item
$\Cf$ of $*$-functors between full commutative C*-categories, 
\item 
$\Sf$ of Takahashi morphism of spectral spaceoids (that are Fell line-bundles over the Cartesian product of a pair groupoid and a compact Hausdorff topological space). 
\end{itemize}
\end{theorem}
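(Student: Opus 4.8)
The plan is to mirror, at the categorified level, the classical proof of the Gel'fand-Na\u\i mark duality: I would construct a \emph{spectrum} functor $\Sp\colon\Cf\to\Sf$ and a \emph{section} functor $\Gamma\colon\Sf\to\Cf$, and then exhibit two natural transformations — a horizontally categorified Gel'fand transform $\Gg\colon\id_{\Cf}\Rightarrow\Gamma\circ\Sp$ and an evaluation transform $\ev\colon\id_{\Sf}\Rightarrow\Sp\circ\Gamma$ — which I would prove to be natural isomorphisms, thereby establishing the asserted duality. Throughout, Takahashi's theorem recalled above would be invoked fiberwise to handle the Hilbert-bundle and module part, while the classical scalar Gel'fand-Na\u\i mark duality handles the diagonal blocks.

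First I would construct $\Sp$ on objects. Given a full commutative C*-category $\Cs$, each diagonal block $\Cs_{AA}$ is a commutative unital C*-algebra, hence isomorphic to $C(X_A;\CC)$ for the compact Hausdorff Gel'fand spectrum $X_A$ of $\Cs_{AA}$. By fullness, each off-diagonal block $\Cs_{AB}$ is an imprimitivity $\Cs_{AA}$-$\Cs_{BB}$-bimodule; for commutative C*-algebras such a bimodule induces a homeomorphism $\phi_{AB}\colon X_B\to X_A$ together with a Hermitian line over $X_A$, and the associativity of the composition $\Cs_{AB}\times\Cs_{BC}\to\Cs_{AC}$ forces the cocycle coherence $\phi_{AB}\circ\phi_{BC}=\phi_{AC}$. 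Fixing a base object lets me identify all the $X_A$ with a single compact Hausdorff space $X$, so the base of the spectrum becomes the Cartesian product of the pair groupoid $\Cs^0\times\Cs^0$ with $X$. Over $(A,B,x)$ I would place the one-dimensional fiber obtained by evaluating $\Cs_{AB}$ at $x$; the composition and involution of $\Cs$ then descend to a Fell-bundle multiplication and $*$-operation, producing a Fell \emph{line}-bundle, i.e.\ a spaceoid $\Sp(\Cs)$. On a $*$-functor $\Phi\colon\Cs\to\Ds$ I would define $\Sp(\Phi)$ as the induced Takahashi morphism, with base map the continuous map of spectra dual to $\Phi$ on the diagonal blocks and bundle map given by the pullback of $\Phi$ on the off-diagonal lines.

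Next I would treat the section functor $\Gamma$: for a spaceoid $(\Es,\pi,\Cs^0\times\Cs^0\times X)$ the continuous sections, fibered over the pair groupoid, form a full commutative C*-category $\Gamma(\Es)$ (the diagonal sections over $\{A\}\times\{A\}\times X$ recovering $C(X;\CC)$ and the off-diagonal sections the imprimitivity bimodules), and a Takahashi morphism pulls back to a $*$-functor between section categories. With both functors in hand I would define the Gel'fand transform $\Gg_\Cs\colon\Cs\to\Gamma(\Sp(\Cs))$ blockwise — on the diagonal it is the classical Gel'fand isomorphism, on the off-diagonal blocks it sends $x\in\Cs_{AB}$ to its evaluation section — and verify it is an isometric $*$-functor that is a bijection on each block, hence an isomorphism of C*-categories. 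Dually, the evaluation transform $\ev_\Es\colon\Es\to\Sp(\Gamma(\Es))$ is checked to be an isomorphism of spaceoids; here the diagonal reproduces the classical evaluation homeomorphism and the off-diagonal reproduces a Serre-Swan--type identification of the line bundle with the spectrum of its section module, which is exactly the content supplied by Takahashi's theorem applied fiberwise.

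The main obstacle, I expect, lies in two coupled points. The first is the coherent descent to a \emph{single} base space $X$: one must check that the homeomorphisms $\phi_{AB}$ induced by the imprimitivity bimodules are not merely individually well defined but assemble compatibly with composition, and that the resulting fibered family is genuinely a \emph{line} (rank-one) bundle — this is where fullness is indispensable and where the argument departs most sharply from the one-object scalar case. The second is proving that the evaluation transform is an isomorphism of spaceoids, i.e.\ the categorified Serre-Swan direction: recovering the topology and the bundle structure of $\Es$ from the purely algebraic C*-category of its sections. Once these are secured, the remaining work — bilinearity, continuity, and the commutativity of the naturality squares for $\Gg$ and $\ev$ with respect to $*$-functors and Takahashi morphisms — is routine though lengthy bookkeeping.
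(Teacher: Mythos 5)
Your proposal is correct and takes essentially the same approach as the paper: the theorem is recalled there from~\cite{BCL2} without a written-out proof, but the duality is explicitly described as being implemented by the spectrum functor $\Sigma$ and the section functor $\Gamma$ together with horizontally categorified Gel'fand and evaluation natural transformations, which is exactly the structure you set up. Your blockwise decomposition --- classical Gel'fand duality on the commutative diagonal blocks, fiberwise Takahashi/Serre--Swan identification on the off-diagonal imprimitivity bimodules, and the cocycle coherence of the induced homeomorphisms collapsing all spectra to a single base $X$ so that the base becomes the product of the pair groupoid with $X$ --- is the same strategy used in the cited proof.
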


The two functors in duality are the: 
\begin{itemize}
\item
section functor $\Sf\xrightarrow{\Gamma} \Cf$ that to a spaceoid associates its C*-category of continuous sections, 
\item 
spectrum functor $\Cf\xrightarrow{\Sigma}\Sf$ that to a commutative full C*-category associates its spectral spaceoid. 
\end{itemize}

\subsection{Monoidal C*-categories, Longo-Roberts 2-C*-categories} 

Towards a full vertical categorification of C*-algebras, in this subsection we start with a discussion of those few already available notions that are directly related to higher C*-categories. 

\medskip 

In S.Doplicher-J.E.Roberts~\cite{DR} a notion of monoidal (or tensor) C*-category has been developed. 
Since strict monoidal categories are strict 2-categories with only one object (the monoidal identity), such definition is the first available hint for the axioms of 2-C*-categories. 
\begin{definition}
A \emph{strict monoidal C*-category} is a C*-category $(\Cs,\circ,*,+,\cdot, \| \ \|)$ equipped with an additional binary operation 
$\otimes:\Cs\times\Cs\to\Cs$ such that:
\begin{itemize}
\item 
$(\Cs,\otimes)$ is a monoid (a category with only one object),  
\item
$\otimes:\Cs\times\Cs\to \Cs$ is a bifunctor,\footnote{Recall that a bifunctor from $(\Cs,\circ)$ to $(\hat{\Cs},\hatcirc)$ is a functor $\otimes: \Cs\times \Cs\to\hat{\Cs}$ defined on the  product category $\Cs\times\Cs$, with componentwise composition. This condition implies the exchange property.} 
\item
$\otimes$ is a bilinear map when restricted to pairs of composable 1-home-sets,
\item 
$*:(\Cs,\otimes)\to(\Cs,\otimes)$ is a covariant functor. 
\end{itemize}
\end{definition}

\begin{remark}\label{rem: conjugates 1}
The actual categories considered by S.Doplicher and J.E.Roberts for the theory of superselection sectors in algebraic quantum field theory are equipped with additional structures: they are symmetric monoidal C*-categories, closed under retracts, direct sums and (more important for us) with conjugates.\footnote{For some aspects of the theory it is also required the triviality condition $\Cs_{II}\simeq\CC$, where $I$ denotes the monoidal identity.} 
Following R.Haag~\cite[section IV.4]{H}, if necessary to avoid confusion, we reserve the name \emph{Doplicher-Roberts C*-categories} for such more specific cases. 
We will later return to a careful study of conjugates for strict monoidal C*-categories (and more generally for Longo-Roberts 
2-C*-categories defined here below) in section~\ref{rem: conjugates} and example~\ref{rem: conjugates2}.  
\xqed{\lrcorner}
\end{remark}

The notion of 2-C*-category was developed by R.Longo-J.E.Roberts~\cite[section~7]{LR} and further studied by P.Zito~\cite{Z}. It is a horizontal categorification of a monoidal C*-category defined as follows:\footnote{Notice here the presence of only one involution on 2-arrows over 1-arrows. Although in some important cases, conjugations (involutions of 2-arrows over objects) can be introduced (see section~\ref{rem: conjugates}), plenty of examples possess only one involution.} 
\begin{definition}\label{def: lr}
A \emph{Longo-Roberts 2-C*-category} is a strict 2-category $(\Cs,\circ,\otimes)$ such that
\begin{itemize}
\item
for all objects $A,B\in \Cs^0$ the home-set $\Cs_{AB}$ is a C*-category with composition $\circ$ and involution $*$, 
\item
the partial bifunctor $\otimes$ is bilinear when restricted to $\circ$-composable $0$-home-sets,   
\item
$*:(\Cs,\otimes)\to(\Cs,\otimes)$ is a covariant functor.\footnote{This property (that is true in all the examples) is actually missing in both~\cite{LR,Z}, but this is probably just a careless omission, otherwise the definition would not reproduce that of monoidal C*-categories when there is only one object.} 
\end{itemize}
\end{definition}

The easiest examples of monoidal C*-categories are given by bounded linear maps between a family of Hilbert spaces (with the usual composition and tensor product). 
Similarly, examples of Longo-Roberts 2-C*-categories are given by adjointable maps between a family of right (respectively left) Hilbert C*-correspondences (i.e.~unital bimodules ${}_\As\Ms_\Bs$ over complex unital C*-algebras with a $\Bs$-valued (respectively $\As$-valued) inner product).

\begin{remark}
Since algebraic tensor products of bimodules over rings (and similarly Rieffel internal tensor product of Hilbert C*-correspondences over C*-algebras) are only weakly associative and weakly unital, it would appear that the previous examples produce only 2-categories that are ``weak'' under $\otimes$ and hence do not precisely comply with definition~\ref{def: lr}. This problem is easily eliminated via the following useful \emph{strictification} procedure embedding all the given Hilbert C*-bimodules into their strictly associative \emph{tensor algebroid} of paths 
(this is a horizontal categorification of a well-known C.Chevalley's procedure~\cite{Ch} and essentially consists of constructing the required tensor products of  bimodules inside a strictly associative unital tensor ring: the free ring generated by the bimodules). 

\smallskip 

Consider a 1-quiver $\Mf\rightrightarrows \Af$ whose nodes are unital associative rings $\As,\Bs\in \Af$ and whose 1-arrows (for example with source $\Bs$ and target $\As$) are unital bimodules of the form ${}_\As\Ms_\Bs\in \Mf$. Denote by $[\Mf]\rightrightarrows \Af$ the \emph{fine graining} of the previous 1-quiver $\Mf$ consisting of the same nodes $\Af$ but with every element $x\in {}_\As\Ms_\Bs$ considered as a different 1-arrow from $\Bs$ to $\As$ (and including, for all $\As\in \Af$, all the elements $a\in \As$ as 1-loops based on $\As$). Proceed to the construction of the free 1-category of paths $\left<[\Mf]\right>$ generated by the fine grained 1-quiver $[\Mf]\rightrightarrows\Af$ and then to $\ZZ[\left<[\Mf]\right>]$, its \emph{category ringoid with coefficient in $\ZZ$}. This is a horizontal categorification of the usual monoid ring $\ZZ[\Xs]$ with integer coefficients over the monoid $\Xs$: its elements are finite formal linear combinations with integer coeffients of 1-arrows belonging only to a given home-set $\left<[\Mf]\right>_{\As\Bs}$ (hence each of these home-sets is an abelian group). Bilinearly extending the composition, $\ZZ[\left<[\Mf]\right>]$ turns out to be a ringoid with the set of objects $\Af$. Finally we obtain the \emph{tensor ringoid} $\Ts(\Mf)$ quotienting the ringoid $\ZZ[\left<[\Mf]\right>]$ by the categorical ideal $\Is$ generated (home-set by home-set) by the elements of the form 
\begin{gather*}
(\dots, x,b,y,\dots)-(\dots,x,by,\dots), 
\quad 
(\dots, x,b,y,\dots)-(\dots,xb,y,\dots), 
\\ 
(\dots,x_1+x_2,\dots)-(\dots,x_1,\dots)-(\dots,x_2,\dots),
\end{gather*} 
for all 1-arrows $x,x_1,x_2\in{}_\As\Ms_{\Bs}$, $y\in {}_\Bs\Ns_\Cs$ and all 1-loops $a\in \As$. Each one of the original bimodules ${}_\As\Ms_\Bs$ (and each one of the rings $\As\in \Af$) has an isomorphic copy inside $\Ts(\Mf)$ via the inclusion $x\mapsto [(x)]:=(x)+\Is_{\As\Bs}$ and the tensor product operation, defined by $[(x)]\otimes[(y)]:=[(x,y)]$, for all $x\in {}_\As\Ms_\Bs, y\in {}_\Bs\Ns_\Cs$, is now strictly associative and unital as required. 
\xqed{\lrcorner}
\end{remark}

\section{Strict Higher Categories and Non-commutative Exchange}  \label{sec: hc}

In this section we introduce, with some detail, the basic definitions in the theory of strict $n$-categories with their usual ``exchange property''. 
We then present the well-known Eckmann-Hilton collapse argument and, in order to avoid it, we propose a relaxed form of exchange property (quantum or non-commutative exchange) consisting in a request of $\circ_p$-functoriality for right/left $\circ_q$-multiplications by $p$-identities, whenever $q<p$. 
Finally, for later use, we also discuss examples (products of categories) whose $n$-cells admit compositions that do not fit with the usual globular or cubical situations. 

Here, the (admittedly questionable) inspiring ideology is to view the current developments in higher category theory as heavily motivated by ``classical homotopy theoretical'' arguments (for example the exchange property) that might not be suitable for a formalization of non-commutative operator algebraic structures that are otherwise perfectly natural and fitting into a higher categorical context. 

\medskip 

Although some of the most natural approaches to the definition of strict higher categorical environments are via 
``globular/cubical higher quivers''~\cite[definition~1.4.8]{L} and either via ``inductive enrichment of categories''~\cite[definition~1.4.1]{L} (for the case of globular shaped cells) or via ``inductive internal categories''~\cite[definition~1.4.13]{L} (for the case of cubical shaped cells), for our discussion here, in view of its extreme compactness, we will use the algebraic definition of globular strict $n$-categories via axioms for their 
``$n$-cells''. We will mainly consider the case of ``globular $n$-cells'' and a more careful study of strict $n$-tuple categories, based on similar algebraic axioms for ``cubical cells'', will be done elsewhere.\footnote{P.Bertozzini, R.Conti, R.Dawe-Martins, ``Involutive Double Categories'' (manuscript) and ``Double C*-categories and Double Fell Bundles'' (works in progress).}

\subsection{Strict Globular Higher Categories (via partial higher monoids)}

Among the equivalent definitions of strict 1-category, we choose a compact axiomatization formulated in terms of properties of 1-arrows under a partial binary operation of composition without any direct reference to objects, identities, source and target maps. 
The following, for example, is a variant of the definition provided in S.Mac Lane~\cite[section~1, page~9]{ML}. 
The resulting notion of partial monoid is a horizontal categorification of the usual definition of monoid, obtained by ``localization'' of identities. 

\begin{definition}
A \emph{1-category} $(\Cs,\circ)$ is a family $\Cs$ of 1-cells (arrows) equipped with a partially defined binary operation of composition $\circ$ that satisfies the following requirements:
\begin{itemize}
\item 
the composition is associative i.e.~whenever one of the two terms $f\circ(g\circ h)$ and $(f\circ g)\circ h$ exists, the other one exists as well and they coincide, 
\item 
for every arrow $f\in \Cs$ there exist a right composable (source) arrow $r\in \Cs$ and a left composable (target) arrow $l\in \Cs$ that are \emph{partial  identities} (objects) i.e.~for all arrows $h_1, h_2,k_1,k_2\in \Cs$: $h_1\circ r=h_1$, $r\circ h_2=h_2$ and $k_1\circ l=k_1$, $l\circ k_2=k_2$, whenever the compositions exist,  
\item 
if $f$ has a right identity that is also a left identity for $g$, the composition $f\circ g$ exists. 
\end{itemize}
A \emph{1-functor} $(\Cs_1,\circ_1)\xrightarrow{\phi} (\Cs_2,\circ_2)$ between two 1-categories is a map $\phi:\Cs_1\to\Cs_2$ such that 
\begin{itemize}
\item 
whenever $x\circ_1y$ exists, also $\phi(x)\circ_2\phi(y)$ exists and in this case $\phi(x\circ_1y)=\phi(x)\circ_2\phi(y)$, 
\item
if $e$ is a partial identity in $(\Cs_1,\circ_1)$, $\phi(e)$ is a partial identity in $(\Cs_2,\circ_2)$. 
\end{itemize} 
\end{definition}
The class of the partial identities (objects) will be denoted by $\Cs^0\subset \Cs$ with inclusion map $\iota:A\mapsto \iota_A$. 
From the axioms it follows immediately the every $x\in \Cs$ has a unique right partially identity (its source) and a unique left partial identity (its target) that we will denote respectively by $s(x)$ and $t(x)$. 
\\ 
The following graphical representations are self-explicative: 
\begin{gather*}
\text{$0$-cells (objects):} \quad \bullet, \quad \quad \quad  
\text{$1$-cells (arrows):} \quad \xymatrix{\bullet_1 \ar[r]& \bullet_2} 
\\ 
\text{sources / targets:} \quad \xymatrix{s(x)\ar[r]^x & t(x)}, \quad \text{identities:} \quad A \mapsto  \xymatrix{A\ar@(dr,ur)_{\iota_A}} 
\\ 
\text{composition:} \quad \xymatrix{A  \ar[r]^g & B \ar[r]^f & C} \ \ \mapsto \ \ \xymatrix{A \ar[rr]^{f\circ g}& & C.} 
\end{gather*}
Given $A,B\in \Cs^0$, we denote by $\Cs_{AB}:=\{x\in \Cs \ | \ s(x)=B, t(x)=A\}$ the home-set of 1-arrows with source $B$ and target $A$. 
The category $(\Cs,\circ)$ is said to be \emph{locally small} if every home-set $\Cs_{AB}$ is a set and \emph{small} if $\Cs$ is also a set.\footnote{For locally small categories this is equivalent to asking $\Cs^0$ to be a set.} 

\medskip 

Also for $n$-categories, we have an equivalent ``$n$-arrows based''-definition. The following is essentially the definition of J.E.Roberts as provided by J.E.Roberts-G.Ruzzi~\cite{RR} and already used, for the case $n=2$, in R.Longo-J.E.Roberts~\cite{LR} and P.Zito~\cite{Z}: 
\begin{definition}\label{def: ncat}
A \emph{globular strict $n$-category} $(\Cs,\circ_0,\dots , \circ_{n-1})$ is a set $\Cs$ equipped with a family of partially defined compositions $\circ_p$, for $p:=0,\dots, n-1$, that satisfy the following list of axioms: 
\begin{itemize}
\item
for all $p=0,\dots, n-1$, $(\Cs,\circ_p)$ is a 1-category, whose partial identities are denoted by $\Cs^p$,\footnote{We will of course use $\Cs^n$ to denote $\Cs$.} 
\item 
for all $q<p$, a $\circ_q$-identity is also a $\circ_p$-identity, i.e.~$\Cs^q\subset \Cs^p$, 
\item 
for all $p,q=0,\dots, n-1$, with $q<p$, the $\circ_q$-composition of $\circ_p$-identities, whenever exists, is a $\circ_p$-identity, i.e.~$\Cs^p\circ_q\Cs^p\subset \Cs^p$, 
\item 
the \emph{exchange property} holds for all $q<p$: whenever $(x\circ_p y)\circ_q (w\circ_p z)$ exists also 
$(x\circ_q w)\circ_p (y\circ_p z)$ exists and they coincide.\footnote{By symmetry, the exchange property automatically holds for all $q\neq p$.} 
\end{itemize} 
A \emph{covariant functor} $(\Cs_1,\circ_0,\dots,\circ_{n-1})\xrightarrow{\phi} (\Cs_2,\hatcirc_0,\dots,\hatcirc_{n-1})$ between two globular strict 
$n$-categories is a homomorphism for each of the partial 1-monoids involved, i.e.~a map $\phi:\Cs_1\to\Cs_2$ such that:  
\begin{itemize}
\item 
whenever $x\circ_q y$ exists, also $\phi(x)\hatcirc_q\phi(y)$ exists and in this case 
$\phi(x\circ_q y)=\phi(x)\hatcirc_q \phi(y)$, 
\item
if $e$ is a partial $\circ_q$-identity in $\Cs^q_1$, $\phi(e)$ is a $\hatcirc_q$-partial identity in $\Cs_2^q$. 
\end{itemize} 
More generally, a \emph{covariant relator} between $n$-categories is a relation $\Rg\subset\Cs_1\times\Cs_2$ such that for all $p$:
\begin{itemize} 
\item 
whenever $(x_1\circ_p x_2)$, $(y_1\hatcirc_p y_2)$ exist and $(x_1,y_1),(x_2,y_2)\in\Rg$ we have 
$((x_1\circ_p x_2),(y_1\hatcirc_p y_2))\in \Rg$, 
\item 
if $(x,y)\in \Rg$ and $e,f\in \Cs^p$, we have $(e,f)\in \Rg$ whenever $(x\circ_pe,y\hatcirc_p f)$, or 
$(e\circ_p x,f\hatcirc_p y)$, exists.\footnote{This is a vertical categorification of the (corrected) definition of relator for 1-categories that was presented in~\cite{BCL}, where this second unitality condition was mistakenly omitted.} 
\end{itemize}
\end{definition} 
The first and the third axioms, imply that, for all $0\leq q<p\leq n$, $(\Cs^p,\circ_q)$ is a 1-category. It follows immediately that, for all $p=1,\dots,n$, any $p$-cell $x\in \Cs^p$ has a unique $q$-source and a unique $q$-target $s^p_q(x),t^p_q(x)\in \Cs^q$. The second axiom allows us to define the  inclusion maps $\iota^p_q:\Cs^q\to\Cs^p$ such that, for all $x\in \Cs^q$, $s^p_q(\iota^p_q(x))=x=t^p_q(\iota^p_q(x))$. The third axiom also assures the functoriality of the maps $\iota^m_p: (\Cs^p,\circ_q)\to(\Cs^m,\circ_q)$. It is particularly crucial to notice that the globular shape of the 
\hbox{$m$-cells}, for all $1<m\leq n$, is actually implicitly determined by the specific form in which the exchange property is stated: for all 
$x\in \Cs^m$, for all $0\leq q < p < m\leq n$, $s^m_p(x),x,t^m_p(x)$ are $\circ_p$-composable and from the fact that both 
$s_q(s_p(x)),s_p(x),t_q(s_p(x))$ and $s_q(t_p(x)), t_p(x),t_q(t_p(x))$ are $\circ_q$-composable, from the exchange property we obtain the 
$\circ_q$-composability of both $s_q(s_p(x)),s_q(t_p(x))$ and $t_q(s_p(x)),t_q(t_p(x))$ that, being $\circ_q$-identities, implies the globular property 
$s_q(s_p(x))=s_q(t_p(x))$ and $t_q(s_p(x))=t_q(t_p(x))$. In a perfectly similar way, the exchange property, implies the functoriality of the maps $s^m_p,t^m_p:(\Cs^m,\circ_q)\to (\Cs^p,\circ_q)$, for all $0\leq q<p<m\leq n$. 

A graphical representation illustrates the combinatorial/geometrical meaning coded in the definition: 
\begin{gather*}
\xymatrix{\Cs^0 \ar[r]|{\iota^1_0}& \Cs^1 \ltwocell_{s^1_0}^{t^1_0}{\omit}\ar[r]|{\iota^2_1}&  \ltwocell_{s^2_1}^{t^2_1}{\omit}}
\dots 
\xymatrix{
\ar[r]|{\iota^q_{q-1}}&  
\ltwocell_{s^q_{q-1}}^{t^q_{q-1}}{\omit}\Cs^q\ar[rr]|{\iota^p_q}& & \Cs^p  \lltwocell_{s^p_q}^{t^p_q}{\omit}\ar[rr]|{\iota^m_p}& & 
\Cs^m \lltwocell_{s^m_p}^{t^m_p}{\omit}\ar[r]|{\iota^{m+1}_m}& \ltwocell_{s^{m+1}_m}^{t^{m+1}_m}{\omit}}
\dots 
\xymatrix{\ar[r]|{\iota^n_{n-1}} & \ltwocell_{s^n_{n-1}}^{t^n_{n-1}}{\omit} \Cs^n 
}
\\ 
\text{$q$-cells:} \quad \bullet , \quad \quad \quad  
\text{$p$-cells ($q<p$):} \quad \xymatrix{\bullet_1 \ar[r]& \bullet_2} \quad \quad \quad 
\text{$m$-cells ($q<p<m$):} \quad \xymatrix{\bullet \rtwocell & \bullet}
\\ 
\text{sources / targets:} \ 
\xymatrix{s^m_q(x) \rrtwocell^{s^m_p(x)}_{t^m_p(x)} {x}& & t^m_q(x),} 
\quad  \quad 
\text{identities:} \ \ A \mapsto \xymatrix{A \rrtwocell^{\iota^p_q(A)}_{\iota^p_q(A)} {\quad \iota^m_q(A)}  & &  A}
\\
\text{$\circ_q$-composition:} \xymatrix{A \rtwocell^{g_1}_{g_2}{\Psi} & B \rtwocell^{f_1}_{f_2}{\Phi}& C}\ \ \mapsto \ \ 
\xymatrix{A \rrtwocell^{f_1\circ^p_q g_1}_{f_2\circ^p_q g_2}{\quad \ \Phi\circ_q\Psi}& & C} 
\\ 
\text{$\circ_p$-composition:} \quad \xymatrix{A \ruppertwocell^{f}{\Theta} \rlowertwocell_{h}{\Lambda} \ar[r]|{g} & B}\ \ \mapsto \ \ 
\xymatrix{A \rrtwocell^f_h{\quad \ \Lambda\circ_p \Theta}& &  B}
\\ 
\text{functoriality of $\iota$:} \quad 
\xymatrix{A \rrtwocell^{g}_{g}{\quad \iota^m_p(g)} & & B \rrtwocell^{f}_{f}{\quad \iota^m_p(f)}& & C} = 
\xymatrix{A \rrtwocell<5>^{f\circ^p_q g}_{f\circ^p_q g}{\quad \quad \iota^m_p(f\circ^p_q g)}& & \ C}
\end{gather*}

\begin{gather*}
\text{exchange property:} \\ 
\xymatrix{
\bullet \ruppertwocell{z} \rlowertwocell{w} \ar[r] 	& \bullet \ruppertwocell{y} \rlowertwocell{x} \ar[r] \ar@{}[d]|{\|} & \bullet 
 	& = 
& \bullet \rruppertwocell{\quad y\circ_q z} \rrlowertwocell{\quad x\circ_q w} \ar[rr] & & \bullet   
	& = 
& \bullet \rrtwocell{\omit (x\circ_q w)\circ_p(y\circ_q z)}&   & \bullet   
\\ 
\bullet \rtwocell{\omit w\circ_p z} &  \bullet \rtwocell{\omit x\circ_p y} & \bullet 
	&= 
& \bullet \rrtwocell{\omit (x\circ_p y)\circ_q(w\circ_p z)} & &  \bullet\ . 
& & }
\end{gather*}

We introduce, for all $q<p$ and $x,y\in \Cs^q$, the notation 
${}_q\Cs^p_{xy}:=\{z\in \Cs^p \ | \ s^p_q(z)=y, \ t^p_q(z)=x\}$ to denote the \emph{$q$-home-set of $p$-arrows} i.e.~the class of $p$-arrows whose $q$-source is $y$ and whose $q$-target is $x$. 

Clearly, ${}_q\Cs^p_{xz}\circ^p_q {}_q\Cs^p_{zy}\subset {}_q\Cs^p_{xy}$, for all $x,y,z\in \Cs^q$ and the family of pairs of $\circ_q$-composable $p$-cells $\Cs^p\times_{\Cs^q}\Cs^p:=\{(a,b)\in \Cs^p\times\Cs^p \ | \ s^p_q(b)=t^p_q(a)\}$ is given by $\Cs^p\times_{\Cs^q}\Cs^p=\bigcup_{x,y,z\in \Cs^q}{}_q\Cs^p_{zy}\times{}_q\Cs^p_{xz}$. 

For all $x,y\in \Cs^r$, whenever $r<q<p$, we have $\iota^p_q({}_r\Cs^{q}_{xy})\subset{}_r\Cs^p_{xy}$, furthermore 
${}_q\Cs^p_{xy}\subset {}_r\Cs^p_{t^q_r(x)s^q_r(y)}$ and 
${}_r\Cs^p_{xy}=\bigcup_{a,b\in {}_r\Cs^{q}_{xy}} \ {}_q\Cs^p_{ab}$, where the union is disjoint.

\medskip 

The usual notion of natural transformation between functors can be immediately reframed, in the setting of our ``$n$-arrows'' definition of strict $n$-categories, via ``intertwiners''.  
Furthermore, for the case of higher categories ($n>1$), following the terminology introduced by S.Crans~\cite{C} (see also the ``transfor'' page on the~\cite{n-Lab} and compare also with the works of C.Kachour~\cite{K1,K2} and G.V.Kondratiev~\cite{Ko}), we can similarly introduce $k$-transfors, for $0\leq k\leq n$, as vertically categorified analogs of natural transformations; in particular 0-transfors are functors, 1-transfors correspond to natural transformations, 2-transfors to modifications, 3-transfors to perturbations \dots (in the literature, for n-categories, with $n\geq 2$, there are already slightly different definitions in place, see the remarks in the \cite{n-Lab} page).  

\medskip 

The main idea behind the general definition of ``higher natural transformations'' between a pair of functors $\phi,\psi:\Cs\to\hat{\Cs}$ of $n$-categories, consists of introducing suitable ``homotopies'' between the different sources/targets of the $n$-cells $\phi(x)$ and $\psi(x)$, $x\in \Cs$, and proceeding iteratively, imposing ``intertwining conditions'' that at the level $n$ must consist of a usual commutative diagram of $n$-cells. 
The language of cubical $n$-categories~\cite{BH,BCM} is much more naturally adapted for the description of the 
\hbox{$(p+1)$-cells} generated by homotopies of $p$-cells and, whenever necessary, we will conveniently translate the ``intertwining conditions'' and ``compositions of transfors'' in such cubical setting.

\begin{definition}
Let the  \emph{0-transfors} be just the covariant functors between strict globular $n$-categories. 
\\ 
Given two 0-transfors $\xymatrix{\Cs\rtwocell^{\phi}_\psi{\omit} & \hat{\Cs}}$ of strict globular $n$-categories $(\Cs,\circ_0,\dots,\circ_{n-1})$ and $(\hat{\Cs},\hatcirc_0,\dots,\hatcirc_{n-1})$, with $n\geq 1$, 
a \emph{1-transfor} between $\phi$ and $\psi$ is a map $\Xi:\Cs^{0}\to \hat{\Cs}^1$, $x\mapsto\Xi(x)$ such that: 
\begin{itemize}
\item 
$\psi(x)\hatcirc_0\Xi(x_0)=\Xi({}_0x)\hatcirc_0\phi(x)$ for all $x\in \Cs$, 
\end{itemize}
where $x_p$ and ${}_p x$ denote respectively the unique source and target partial $\circ_p$-identities of $x\in\Cs$.\footnote{
We are using here a strict notion for 1-transfors (as in the treatment provided by F.Borceux~\cite[section~7.3]{Bo} and 
G.V.Kondratiev~\cite[definitions~1.6-1.7-1.8]{Ko}); more generally, one can consider lax natural transformations (see for example T.Leinster~\cite[definition~1.5.10]{L}) adn introduce classes of ``lax'' higher natural transformations (see~S.Crans~\cite{Cr} and C.Kachour~\cite{K1,K2}).}

\medskip 

By recursion, suppose that we already defined (globular) $(k-1)$-transfors between $(k-2)$-transfors. 
\\ 
A \emph{(globular) $k$-transfor} $\xymatrix{\Cs\rtwocell^{\Phi^{(k-1)}}_{\Psi^{(k-1)}}{\ \ \Xi^{(k)}} & \hat{\Cs}}$,  
between $(k-1)$-transfors  $\Phi^{(k-1)},\Psi^{(k-1)}$, \dots, between two functors 
$\Phi^{(0)},\Psi^{(0)}:\Cs\to \hat{\Cs}$, for two strict globular $n$-categories, 
$(\Cs,\circ_0,\dots,\circ_{n-1}), (\hat{\Cs},\hatcirc_0,\dots,\hatcirc_{n-1})$, with $n\geq k$, is a map $\Xi^{(k)}:\Cs^{0}\to \hat{\Cs}^k$, 
$x\mapsto\Xi(x)$ such that: 
\begin{itemize}
\item
$\Phi^{(k-1)}(A)\xrightarrow{\Xi^{(k)}(A)} \Psi^{(k-1)}(A)$ for all $A\in \Cs^0$, 
\item 
$\Psi^{(0)}(x)\hatcirc_0\Xi^{(k)}(x_0)=\Xi^{(k)}({}_0x)\hatcirc_0\Phi^{(0)}(x)$ for all $x\in \Cs$.
\end{itemize}
\end{definition} 

Functors ($k=0$) between small strict $n$-categories constitute a strict 1-category and natural trasformations ($k=1$) constitute a strict 2-category. 
Similarly, by induction, we have the following result (a sketch of the proof is presented in~\cite[proposition~1.4]{Ko} and, for the case $k=2$, in F.Borceux~\cite[section~7.3]{B}) that provides a nice class of examples of strict higher categories constructed inductively. 

\begin{theorem}\label{th: n-tr}
The family of (globular) $n$-transfors between small strict globular $n$-categories becomes a strict $(n+1)$-category.\footnote{More generally, in the same way, the family of (globular) $k$-transfors (for fixed $k\leq n$), of small strict globular $n$-categories, becomes a strict globular 
$(k+1)$-category.}
\end{theorem}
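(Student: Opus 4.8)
The plan is to argue by induction on $k$ that, for each $k\le n$, the family $\T_k$ of all transfors of order $\le k$ between the chosen small strict globular $n$-categories carries the structure of a strict globular $(k+1)$-category in the sense of Definition~\ref{def: ncat}; the statement is then the case $k=n$. In this $(k+1)$-category the $0$-cells are the small strict globular $n$-categories themselves, the $1$-cells are the functors ($0$-transfors), and more generally the $(j+1)$-cells are the $j$-transfors for $0\le j\le k$. The base case $k=0$ is immediate: functors between small strict globular $n$-categories compose associatively, composability being exactly the matching of codomain and domain $n$-category, and each $n$-category carries an identity functor, so $(\T_0,\circ_0)$ is a $1$-category.

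For the inductive step I assume $\T_{k-1}$ is a strict globular $k$-category and promote it to $\T_k$ by adjoining the top composition $\circ_k$ and extending $\circ_0,\dots,\circ_{k-1}$ to the new top cells. The observation that organizes all verifications is that, for every $p$ with $1\le p\le k$, the $\circ_p$-composite of two top cells is computed \emph{pointwise} in the target: for $\circ_p$-composable $k$-transfors $\Xi,\Xi'\colon\Cs^0\to\hat{\Cs}^k$ one sets $(\Xi'\circ_p\Xi)(A):=\Xi'(A)\,\hatcirc_{p-1}\,\Xi(A)$ for all $A\in\Cs^0$, which makes sense because $\Xi(A),\Xi'(A)\in\hat{\Cs}^k$ share their $(p-1)$-boundary in $\hat{\Cs}$, and which requires $\hatcirc_{p-1}$ to exist, i.e.\ $k\le n$. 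Only $\circ_0$ is structurally different: it is the Godement-type horizontal composition of transfors $\Cs\to\hat{\Cs}$ and $\hat{\Cs}\to\hat{\hat{\Cs}}$, obtained by composing the underlying functors and whiskering by (respectively applying) the transfors. The first thing to check is that each composite is again a transfor, i.e.\ that it satisfies the intertwining identity $\Psi^{(0)}(x)\hatcirc_0\Xi(x_0)=\Xi({}_0x)\hatcirc_0\Phi^{(0)}(x)$ of the appropriate order; for the pointwise compositions this is a direct consequence of the exchange property relating $\hatcirc_0$ and $\hatcirc_{p-1}$ in $\hat{\Cs}$, and for $\circ_0$ it follows from functoriality of the underlying functors together with the intertwining identities of the two factors.

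It then remains to verify the four axioms of Definition~\ref{def: ncat} for the new compositions. Associativity of $\circ_k$ and the existence of $\circ_k$-identities, namely the identity $k$-transfors $A\mapsto\iota(\cdots)$ built from the $\hat{\Cs}$-identities, hold pointwise because they hold for $\hatcirc_{k-1}$ in $\hat{\Cs}$; the nesting axioms $\Cs^q\subset\Cs^p$ and $\Cs^p\circ_q\Cs^p\subset\Cs^p$ involving the new index $p=k$ likewise reduce pointwise to the corresponding inclusions among the compositions of $\hat{\Cs}$. The exchanges between $\circ_k$ and $\circ_q$ with $1\le q<k$ are equally harmless: since both compositions are pointwise, both sides of the exchange identity evaluate at each object to an instance of the exchange property between $\hatcirc_{k-1}$ and $\hatcirc_{q-1}$ in the strict $n$-category $\hat{\Cs}$, which holds by hypothesis.

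The main obstacle is the single remaining exchange, that of $\circ_0$ against each higher $\circ_p$: this is exactly the higher Godement interchange law, and it does \emph{not} reduce to a pointwise statement in one target $n$-category because $\circ_0$ alters both the source and the target $n$-categories. Here one must expand both sides of the exchange identity using the explicit horizontal-composition formula and reduce the equality to a combination of (i) functoriality of the functors appearing as $\circ_0$-boundaries, which lets one carry a $\circ_p$-composite through a functor, and (ii) the intertwining identities defining the transfors being composed, which supply the required naturality squares. I expect essentially all the work to lie here; once this pair $(\circ_0,\circ_p)$ is settled, the remaining exchanges follow by symmetry and by the inductive hypothesis, completing the induction and, with $k=n$, the proof. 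The more general footnoted statement, that for fixed $k\le n$ the $k$-transfors alone form a strict globular $(k+1)$-category, is obtained by the same argument restricted to that level.
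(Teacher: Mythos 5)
Your proposal is correct and takes essentially the same route as the paper: the paper only sketches this result (induction from the classical cases $k=0,1$, deferring details to the cited references of Kondratiev and Borceux), and your componentwise formulas --- $(\Xi'\circ_p\Xi)(A)=\Xi'(A)\,\hatcirc_{p-1}\,\Xi(A)$ for $p\geq 1$, with Godement-type horizontal composition for $\circ_0$ --- are precisely the ones the paper itself employs in the proofs of proposition~\ref{prop: nlr} and theorem~\ref{th: iso-n-tr}. Your singling out of the $(\circ_0,\circ_p)$ interchange as the only verification that does not reduce pointwise to the target $n$-category, to be settled by functoriality of the boundary functors together with the intertwining identities, is accurate, and that computation goes through exactly as you describe.
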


Variants of this result can be explored also for the case of ``lax'' transfors (C.Kachour~\cite{K2}). 

\subsection{Exchange Property and Eckmann-Hilton Collapse}

Whenever $o\in \Cs^q$ and $0\leq q\leq p< m\leq n$, we define the \emph{$q$-diagonal $p$-home-sets of $m$-arrows over $o$} 
\begin{equation*}
{}_p\Cs^m_{oo}:=\{x\in \Cs^m \ | \ s^m_k(x)= \iota^k_q(o) =t^m_k(x), \quad \forall k=q,\dots, p\}
\end{equation*}
as the family ${}_p\Cs^m_{oo}\subset {}_q\Cs^m_{oo}$ of $m$-arrows that share a common source and target 
$k$-arrow $\iota^k_q(o)$ for all $q\leq k\leq p$ and we note that, as a consequence, on such diagonal home-sets all the compositions $\circ_q,\dots,\circ_p$ are well defined global binary operations.    

\medskip 

The following proposition, that is fundamental for the discussion about non-commutativity in the context of $n$-categories, is just a higher-categorical version of the well-known Eckmann-Hilton argument~\cite{EH} (see for example T.Leinster~\cite[proposition~1.2.4]{L} or P.Zito~\cite{Z} in the case of 2-categories); it follows immediately from the exchange law and assures a strong trivialization of the categorical structure. 

\begin{proposition}\label{prop: eh}
If $0\leq q\leq p< m\leq n$ and $o\in \Cs^q$ is a $q$-arrow in an $n$-category $(\Cs,\circ_0,\dots,\circ_{n-1})$,  the $q$-diagonal $p$-home-set ${}_p\Cs^m_{oo}$ is a $(m-q)$-category $({}_p\Cs^m_{oo},\circ_q,\dots, \circ_{m-1})$ and a monoid for all the operations $\circ_q,\dots,\circ_p$. 
If $q<p$, all the operations $\circ_q,\dots,\circ_p$ coincide and they are commutative, hence ${}_p\Cs^m_{oo}$ actually collapses to a $(m-p)$-category that is a commutative monoid for $\circ_p$. 
\end{proposition}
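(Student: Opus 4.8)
The plan is to first promote the block to an honest $(m-q)$-category, then run a pairwise Eckmann--Hilton collapse on the compositions that are totally defined on it.

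First I would check that ${}_p\Cs^m_{oo}$ is closed under every $\circ_k$ with $q\leq k\leq m-1$ and under the maps $s^m_j,t^m_j$ for $q\leq j\leq m$. For $q\leq k\leq p$ this makes $\circ_k$ a \emph{global} binary operation on the block, since all elements share the $k$-source/target $\iota^k_q(o)$ and are therefore $\circ_k$-composable; for $p<k$ the composition stays partial. Closure is a bookkeeping check resting on the two behaviours of the source maps: from $s^m_j=s^k_j\circ s^m_k$ one gets $s^m_j(x\circ_k y)=s^m_j(y)=\iota^j_q(o)$ for $j\leq k$, while for $k<j\leq p$ the functoriality of $s^m_j,t^m_j$ with respect to $\circ_k$ (noted after Definition~\ref{def: ncat}) together with the fact that $\iota^j_q(o)$ is a $\circ_k$-identity gives $s^m_j(x\circ_k y)=\iota^j_q(o)\circ_k\iota^j_q(o)=\iota^j_q(o)$, and symmetrically for targets. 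The remaining clauses of Definition~\ref{def: ncat} (associativity, nesting $\Cs^k\subset\Cs^l$, composability of identities, exchange) are inherited verbatim from $\Cs$, so $({}_p\Cs^m_{oo},\circ_q,\dots,\circ_{m-1})$ is an $(m-q)$-category, and a monoid under each of $\circ_q,\dots,\circ_p$.

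Next I would exhibit a single two-sided unit shared by all the total operations. The $\circ_k$-unit on the block is $\iota^m_k(\iota^k_q(o))$, and the compatibilities $\iota^m_k\circ\iota^k_q=\iota^m_q$ and $\iota^l_k\circ\iota^k_q=\iota^l_q$ (which follow from uniqueness of identities in the $1$-categories $(\Cs^l,\circ_k)$) show each of these equals the one $m$-cell $e:=\iota^m_q(o)$. Thus $e$ is simultaneously a two-sided identity for every $\circ_k$ with $q\leq k\leq p$. Now assuming $q<p$ and fixing $q\leq k<l\leq p$, both $\circ_k$ and $\circ_l$ are total on the block, share the unit $e$, and satisfy the exchange law $(a\circ_l b)\circ_k(c\circ_l d)=(a\circ_k c)\circ_l(b\circ_k d)$. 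Inserting units yields $a\circ_k b=(a\circ_l e)\circ_k(e\circ_l b)=(a\circ_k e)\circ_l(e\circ_k b)=a\circ_l b$, so the operations coincide, and $a\circ_l b=(e\circ_k a)\circ_l(b\circ_k e)=(e\circ_l b)\circ_k(a\circ_l e)=b\circ_k a$, so the common operation is commutative. Running this over all such pairs collapses $\circ_q=\dots=\circ_p$ to a single commutative operation, leaving the $m-p$ distinct compositions $\circ_p,\dots,\circ_{m-1}$; hence the block reduces to an $(m-p)$-category that is a commutative monoid under $\circ_p$.

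The main obstacle I expect is not the Eckmann--Hilton step, which is a two-line formal computation, but the preliminary bookkeeping: verifying closure of the block and, above all, that the candidate units $\iota^m_k(\iota^k_q(o))$ genuinely coincide for all $q\leq k\leq p$, since the entire collapse hinges on the two operations sharing one identity. This demands the functorial compatibilities of the inclusion and source/target maps recorded after Definition~\ref{def: ncat} rather than any new idea.
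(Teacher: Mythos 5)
Your proposal is correct and follows essentially the same route as the paper's proof: establish that the compositions $\circ_q,\dots,\circ_p$ are totally defined on the diagonal block, identify the shared identity, and then run the standard Eckmann--Hilton computation using the exchange property, which is word-for-word the same pair of equations the paper writes (with the paper abusing notation by writing $o$ where you more carefully write $e=\iota^m_q(o)$). Your additional bookkeeping on closure of the block and on the coincidence of the units $\iota^m_k(\iota^k_q(o))$ is a welcome tightening of details the paper leaves implicit, but it is not a different argument.
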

\begin{proof}
If $o\in \Cs^q$, for all $q\leq k<p$, for all $x,y\in {}_q\Cs^m_{oo}$, if $x\circ_k y$ exists, it always belongs to ${}_q\Cs^p_{oo}$ and so we have a category $({}_q\Cs^p_{oo},\circ_q,\dots, \circ_{p-1})$. 
Since $\Cs^q\subset \Cs^k$, if $x,y\in {}_{k}\Cs^p_{oo}$, their compositions $x\circ_q y, \dots, x\circ_{k}y$ are always defined and so $({}_{k}\Cs^p_{oo},\circ_q,\dots,\circ_k)$ is a $(k-q+1)$-monoid. 

Suppose now that $q\leq i<j\leq p-1$, since $o\in\Cs^q\subset \Cs^i\subset\Cs^j$, we have that $o$ is an identity for both the compositions $\circ_i$ and $\circ_j$. 
From the exchange property it follows that, for all $x,y\in {}_k\Cs^p_{oo}$ 
\begin{gather*}
(x\circ_i y)=(x\circ_j o)\circ_i (o\circ_j y)=(x\circ_i o)\circ_j(o\circ_i y)=x\circ_j y, 
\\  
x\circ_i y=(o\circ_j x)\circ_i (y\circ_j o)=(o\circ_i y)\circ_j(x\circ_i o)=y\circ_j x  
\end{gather*}
that shows that $\circ_i=\circ_j$ when restricted to the set ${}_k\Cs^p_{oo}$. 
\end{proof}

If $q<n-1$ and $o\in \Cs^q$ is a $q$-arrow in an $n$-category $(\Cs,\circ_0,\dots,\circ_{n-1})$ the $q$-diagonal home-set ${}_q\Cs^p_{oo}$, with $q<p-1$, is a commutative $p$-category $({}_q\Cs^p_{oo},\circ_q,\dots,\circ_{p-1})$ and all the operations coincide and are commutative when restricted to the home-set ${}_{(p-1)}\Cs^{p}_{oo}$. 

\medskip 

Again a much better intuition comes from the following graphical explanation of the proof. 
If $q< p<n$ and $n$-arrows have a common $q$-source $q$-target $\bullet$: 
$\circ^n_p=\circ^n_q$ and they are commutative operations. 
\begin{equation*}
\xymatrix{
\bullet \ruppertwocell{\iota} \rlowertwocell{\Psi} \ar[r] 	& \bullet \ruppertwocell{\Phi} \rlowertwocell{\iota} \ar[r] 	\ar@{}[d]|{\|} & \bullet 
 	& = 
& \bullet \ruppertwocell{\Phi} \rlowertwocell{\Psi} \ar[r] & \bullet   
	& = 
& \bullet \rrtwocell{\quad \ \Psi\circ^n_p\Phi}&   & \bullet   
\\ 
\bullet \rtwocell{\Psi} & \bullet \rtwocell{\Phi} \ar@{}[d]|{\|} & \bullet 
	&= 
& \bullet \rrtwocell{\quad \ \Phi\circ^n_q\Psi} & &  \bullet 
& & 
\\ 
\bullet \ruppertwocell{\Psi} \rlowertwocell{\iota} \ar[r] & \bullet \ruppertwocell{\iota} \rlowertwocell{\Phi} \ar[r] & \bullet 
	&= 
& \bullet \ruppertwocell{\Psi} \rlowertwocell{\Phi} \ar[r] & \bullet   
	&=  
&\bullet  \rrtwocell{\quad \ \Phi\circ^n_p\Psi}&  & \bullet 
\\ 
\ar@{}[r]|{\text{\normalsize where}} & \quad \iota  \ar@{}[r]|{\text{\normalsize is}} & \bullet \rtwocell^{\iota^1_\bullet}_{\iota^1_\bullet}{\iota^2_\bullet}& \bullet \quad .
& & & & & &  
}
\end{equation*}

\subsection{Non-commutative Exchange Property} 

As a possible solution in order to avoid the Eckmann-Hilton collapse of the algebraic structure of \hbox{$q$-diagonal} $p$-home-sets for $q<p$, we propose to relax the form of the exchange property for globular strict $n$-categories and we put forward the following definition.\footnote{We stress that the unique change from definition~\ref{def: ncat} consists in the modified exchange property (the fourth item below).} 
\begin{definition} 
A \emph{globular strict $n$-category with non-commutative exchange} 
$(\Cs,\circ_0,\dots,\circ_{n-1})$ 
is a class $\Cs$ equipped with a family $\circ_0,\dots,\circ_{n-1}$ of $n$ partially defined binary operations 
$\circ_p:\Cs\times\Cs\to\Cs$, for $p=0,\dots,n-1$, such that: 
\begin{itemize}
\item
$(\Cs,\circ_p)$ is a $1$-category for all $p=0,\dots,n-1$, 
\item 
$\Cs^q\subset \Cs^p$ for all $q<p$, i.e.~a $\circ_q$-identity is also a $\circ_p$-identity, 
\item 
$\Cs^p\circ_q\Cs^p\subset \Cs^p$ for all $p,q=0,\dots, n-1$ with $q<p$,  
i.e.~the $\circ_q$-composition of $\circ_p$-identities, whenever exists, is a $\circ_p$-identity, 
\item \emph{non-commutative (quantum) exchange}: 
for all $p$-identities $\iota$, for all $q<p$, the partially defined maps $\iota\circ_q-:(\Cs,\circ_p)\to(\Cs,\circ_p)$ and 
$-\circ_q \iota:(\Cs,\circ_p)\to(\Cs,\circ_p)$ are functorial (homomorphisms of partial 1-monoids). 

An $n$-category with (quantum) exchange will also be referred as a \emph{quantum $n$-category}.\footnote{The reason for the usage of the term ``quantum'' lies in the the fact that non-commutative operations (notoriously essential for a the mathematical formalization of quantum observables) can only survive in the presence of this relaxed exchange property.}  
\end{itemize}
\end{definition}
The graphical representation of the non-commutative exchange property (here $q<p$, $A,B,C\in \Cs^q$, $e,f,g,h\in \Cs^p$, $\Phi,\Psi\in \Cs$), makes immediately clear that this is just the original exchange axiom, required to hold only for the special situation when two of the $n$-cells involved coincide with a given $\circ_p$-identity: 

\begin{gather*}
\xymatrix{A \rtwocell^e_e{\ \iota(e)} & B  \ruppertwocell^f{\Psi} \rlowertwocell_h{\Phi} \ar[r]|{g} 	& C}
= 
\xymatrix{A \ruppertwocell^{e}{\iota(e)} \rlowertwocell_{e}{\iota(e)} \ar[r]|{e} 	& B \ruppertwocell^f{\Psi} \rlowertwocell_h{\Phi} \ar[r]|{g} & C} 
= 
\xymatrix{A \rruppertwocell^{f\circ e}{\quad \Psi\circ \iota(e)} \rrlowertwocell_{h\circ e}{\quad \Phi\circ \iota(e)} \ar[rr]|{g\circ e} &	& C} 
\end{gather*}

\begin{gather*}
\xymatrix{ B  \ruppertwocell^f{\Psi} \rlowertwocell_h{\Phi} \ar[r]|{g} & C \rtwocell^e_e{\ \iota(e)}& D }
=
\xymatrix{ B \ruppertwocell^f{\Psi} \rlowertwocell_h{\Phi} \ar[r]|{g} & C \ruppertwocell^{e}{\iota(e)} \rlowertwocell_{e}{\iota(e)} \ar[r]|{e} &D}
=
\xymatrix{B \rruppertwocell^{e\circ f}{\ \ \ \iota(e)\circ \Psi} \rrlowertwocell_{e\circ h}{\ \ \ \iota(e)\circ \Phi} \ar[rr]|{e\circ g} &	& D .}
\end{gather*} 
An acute reader will remember that the globularity of the $n$-cells was actually encoded in the specific form of the exchange property and might at this point question if our relaxed non-commutative exchange property still implies the globularity condition. Indeed this is true: the right-functoriality in the quantum exchange property assures that for 
$x\in \Cs$, the $\circ_q$-identity $s_q(x)$ is $\circ_p$-composable (on the right) with both the $\circ_q$-identities $s_q(s_p(x))$ and $s_q(t_p(x))$ and so the three of them must coincide. A similar argument using the $\circ_q$-identity $t_q(x)$ and the left-functoriality provides the second globular condition.  

\medskip 

The Eckmann-Hilton collapse (proposition~\ref{prop: eh}) is avoided: in the strict non-commutative $n$-category $(\Cs,\circ_0,\dots,\circ_{n-1})$, for all $q<n-1$, for all $o\in\Cs^q$, the $q$-diagonal $(n-1)$-home-sets of $n$-cells ${}_{n-1}\Cs_{oo}$ can all be non-commutative monoids with respect to the restriction of any one of the operations $\circ_{n-1},\dots,\circ_q$, and these restrictions are not forced anymore to coincide. This will be clear from the examples that are provided in the context of higher C*-categories. 

\medskip 

We are well aware that the proposed non-commutative exchange property is somehow going against the usual lines of development of the subject as  inspired by higher homotopy theory. The full justification for such a questionable, apparently arbitrary, modification of the usual notion of $n$-category, comes from the richness of natural examples of operator structures perfectly fitting with this framework as well as from quite elementary discussion of higher relational environments (higher categories of \hbox{$n$-quivers}) that will be presented further on.
We stress that some of our examples of strict $n$-categories with non-commutative exchange do not necessarily fall within the scope of weak $n$-categories, since they do not satisfy the usual exchange property even up to higher isomorphisms. 

\subsection{Product Categories as Full-depth Strict Higher Categories}\label{sec: full-depth} 

In this subsection we propose a generalization of the previous notion of strict higher category with non-commutative exchange that later on will turn out to be essential for a complete description of the operations between hypermatrices as ``higher convolutions''.   

\medskip 

It is well-known (see for example S.Mac Lane~\cite[section~II.3]{ML}) that the Cartesian product 
$\Xs_1\times\cdots\times \Xs_n$ of a family of $n$ different $1$-categories $(\Xs_k,\circ_k)$, for $k=1,\dots,n$, can be seen either as another $1$-category, with componentwise composition, or as an $n$-tuple category, with $n$ different directional compositions. 

\medskip 

In reality, the strict cubical $n$-category obtained from the Cartesian product $\Xs_1\times\cdots\times\Xs_n$ of 1-categories has a much richer structure and its cubical $n$-cells can generally be composed over $q$-arrows, for all $0\leq q\leq n$, via $2^n=\sum_{q=0}^n \binom{n}{q}$ compositions, $\circ_{j_1\cdots j_q}$, $\binom{n}{q}$ of which are at depth-$q$, one for each subset $\{j_1,\dots,j_q\}\subset \{1,\dots,n\}$, as specified by the following definition:   
\begin{definition}
Let $(\Xs_1,\circ_1), \dots, (\Xs_n,\circ_n)$ be strict 1-categories and $x_k,y_k\in \Xs_k$, for all $k=1,\dots,n$. 
\\ 
We say that $(x_1,\dots, x_n)$ and $(y_1,\dots,y_n)$ are $\circ_{j_1\dots j_q}$-composable if and only if for all $p\in \{j_1\dots,j_q\}$, 
$x_{j_k}=y_{j_k}$ and $(x_p,y_p)$ are composable in $\Xs_p$, for all $p\notin\{j_1,\dots,j_q\}\subset \{1,\dots,n\}$ and in this case
\begin{gather*}
(x_1,\dots, x_n)\circ_{j_1\cdots j_q}(y_1,\dots,y_n):=(t_1,\dots,t_n) \quad  \text{with} \quad 
t_{p}:=
\begin{cases}
x_{p}= y_{p}, \ \text{for all } \  p\in \{j_1,\dots j_q\},   
\\
x_p\circ y_p, \ \text{for all} \ p\notin\{j_1,\dots,j_q\}.
\end{cases}
\end{gather*}
\end{definition}
The usual componentwise composition making $\Xs_1\times\cdots\times\Xs_n$ into the 1-category Cartesian product of the family $(\Xs_k,\circ_k)_{k=1,\dots, n}$ corresponds to the unique case of operation $\circ_{12\ \cdots\ (n-1)n}$ of depth-$n$.  

\medskip 

A graphical representation of the $4=2^n$ composition for the case $n=2$ is here illustrated: 
\begin{itemize}
\item 
cubical $2$-cells: $\vcenter{\xymatrix{\ar[r] \ar[d] \ar@{:>}[dr]&  \ar[d] \\  \ar[r]& }}$
\item 
$\circ^2_h$-composition: 
$\vcenter{\xymatrix{
A_{11} \ar[d]_{g_1} \ar[r]^{f_{11}} \ar@{:>}[dr]|{\Phi} & A_{12} \ar[d]|{g_2} \ar[r]^{f_{12}} \ar@{:>}[dr]|{\Psi}& A_{13} \ar[d]^{g_3} 
\\
A_{21} \ar[r]_{f_{21}} & A_{22} \ar[r]_{f_{22}} & A_{23}
}
}
\quad 
\mapsto 
\quad 
\vcenter{\xymatrix{
A_{11} \ar[d]_{g_1} \ar[r]^{f_{12}\circ^1_h f_{11}} \ar@{:>}[dr]|{\Psi\circ^2_h\Phi} & A_{13} \ar[d]^{g_3} 
\\
A_{21} \ar[r]_{f_{22}\circ^1_h f_{21}} & A_{23}
}
}$
\item  
$\circ^2_v$-composition 
$\vcenter{\xymatrix{
A_{11} \ar[d]_{g_{11}} \ar[r]|{f_1} \ar@{:>}[dr]|{\Phi} & A_{12} \ar[d]^{g_{12}} 
\\
A_{21} \ar[r]|{f_2} \ar[d]_{g_{21}}\ar@{:>}[dr]|{\Psi}& A_{22} \ar[d]^{g_{22}}
\\
A_{31} \ar[r]|{f_3} & A_{32}
}
}
\quad 
\mapsto 
\quad 
\vcenter{\xymatrix{
A_{11} \ar[d]_{g_{21}\circ^1_v g_{11}} \ar[r]^{f_1} \ar@{:>}[dr]|{\Psi\circ^2_v\Phi} & A_{12} \ar[d]^{g_{22}\circ^1_v g_{12}} 
\\
A_{31} \ar[r]_{f_3} & A_{32}
}
}
$
\item 
$\circ_{hv}$-composition 
$\vcenter{\xymatrix{\ar[r] \ar[d] \ar@{:>}[dr]|{\Phi}&  \ar[d] & 
\\  
\ar[r]&    \ar[r] \ar[d] \ar@{:>}[dr]|{\Psi}&  \ar[d] 
\\ 
&  \ar[r] &  
}}
\quad 
\mapsto 
\quad 
\vcenter{\xymatrix{\ar[r] \ar[d] \ar@{:>}[dr]|{\Psi\circ_{hv}\Phi}&  \ar[d] \\  \ar[r]& }}
$
\item 
$\circ_\varnothing$-composition 
$\vcenter{\xymatrix{\ar@<1pt>[r] \ar@<-2pt>@{.>}[r] \ar@<1pt>[d] \ar@<-2pt>[d] 
\ar@{:>}[dr]_{\Phi}^{\Psi}&  \ar@<1pt>[d] \ar@<-2pt>@{.>}[d] \\  \ar@<1pt>[r] \ar@<-2pt>[r] & }}
\quad 
\mapsto 
\quad 
\vcenter{\xymatrix{\ar[r] \ar[d] \ar@{:>}[dr]|{\Psi\circ_\varnothing\Phi} &  \ar[d] \\  \ar[r]& }}.
$
\end{itemize}
Motivated by this elementary Cartesian product example, we tentatively put forward an axiomatization of a higher strict categorical structure suitable for treating such situations. 
\begin{definition}
A \emph{full-depth strict cubical $n$-category} $(\Cs,\{\circ_\gamma\}_{\gamma\subset \{1.\dots,n\}})$ is a class $\Cs$ equipped with a family of $2^n$ partially defined compositions $\circ_\gamma$, one for each subset $\gamma\subset\{1.\dots,n\}$ satisfying:
\begin{itemize}
\item 
$(\Cs,\circ_\gamma)$ is a strict 1-category for all $\gamma\subset\{1,\dots,n\}$, 
\item 
$\Cs^\alpha\subset \Cs^\beta$, whenever $\beta\subset\alpha$ and $\Cs^\gamma$ denotes the partial identities of $(\Cs,\circ_\gamma)$, 
\item
$\Cs^\beta\circ_\alpha\Cs^\beta\subset \Cs^\beta$, for all $\beta\subset\alpha$, 
\item 
non-commutative (quantum) exchange: for all $\circ_\gamma$-identities $\iota\in \Cs^\gamma$ and for all $\gamma\subset\alpha$, the left/right 
\hbox{$\circ_\alpha$-compositions} $\iota\circ_\alpha -:(\Cs,\circ_\gamma)\to(\Cs,\circ_\gamma)$ and 
$-\circ_\alpha\iota:(\Cs,\circ_\gamma)\to(\Cs,\circ_\gamma)$ are functorial. 
\end{itemize}
\end{definition}
The above definition essentially works for any arbitrary ordered set of indices. Remarkably, it is the specific choice of the ordered set of indices that (via such axioms) determines the geometrical / combinatorial shape of the $n$-cells: when the set of indices is the power set of $\{1,\dots,n\}$ we get cubical cells; when the index set is the set of cardinals less than $n$, we get globular $n$-cells. Other choices different from these two will produce more exotic shapes \dots\ and this is another even stronger departure from the usual world of higher categories inspired by higher homotopy theory. 

\section{Strict Involutive Higher Categories} \label{sec: *}

In this section we discuss a possible notion of strict involution in the context of strict higher categories. 
From the case of 1-categories, we know that there are actually several different ways in which involutions and dualities have been introduced in the categorical context, either via involutive endo-functors, or via dualities implemented via adjointability, or dualizable objects.\footnote{For a more detailed discussion of other approaches to ``categorical involutivity/duality'' we suggest J.Baez-M.Stay~\cite{BS}, the \texttt{n-lab} page 
\hlink{http://ncatlab.org/nlab/show/category+with+duals}{Categories with Duals} and references therein.} A full comparison between these different notions deserves a separate treatment elsewhere; for our purpose here, involutions (dualities) will be defined as involutive functors with specific covariance and contravariance properties with respect to the several compositions. This point of view is directly inspired by the notion of $*$-category introduced (with additional linearity assumptions) by P.Ghez-R.Lima-J.E.Roberts and P.Mitchener~\cite{GLR,M} as a horizontal categorification of the involution of a $*$-algebra. Categories equipped with involutive (contravariant) endofunctors have been studied since the works of M.Burgin~\cite[section~2]{Bu} and J.Lambek~\cite{La}; with the denomination ``dagger categories'' they have been axiomatized by P.Selinger~\cite{S} and are now systematically used by S.Abramsky-B.Coecke~\cite{AC} and collaborators in their works on categorical quantum mechanics via compact closed categories (see also C.Heunen-B.Jacobs~\cite{HJ}).  
Weak forms of such involutive categories appear in our definition of \hbox{$*$-monoidal} category~\cite{BCL4} and similar structures were independently developed by J.Egger~\cite{Eg} and B.Jacobs~\cite{J}. 
Involutions in the context of monoidal C*-categories and (weak) 2-categories have actually been present since the initial works by R.Longo-J.E.Roberts~\cite{LR} (see details in the subsequent section~\ref{rem: conjugates}) and have been independently introduced also in J.Baez-L.Langford~\cite[definition~10]{BL}, T.Hayashi-S.Yamagami~\cite[definition~1.3]{HY},  A.Henriques-D.Penneys-\cite[definition~2.3]{HP}, (see also A.Henriques~\cite[definition~2.3]{He2} and D.Penneys-C.Jones~\cite[definition~2.4]{JP}). 
Weak higher involutions, for weak $\omega$-categories (in Penon's approach~\cite{P,K1}) are introduced in~\cite{BB}. 

\subsection{Strict Higher Involutions}

A graphical display of ``duals'' of $n$-cells helps to grasp the intuition behind the formal definitions:  
\begin{itemize}
\item 
dual cells for $1$-arrows (usual involution): $\xymatrix{A \ar[r]^f & B}\quad \mapsto \quad \xymatrix{A & B \ar[l]_{\ \ f^*}}$
\item
dual cells for globular $n$-arrows ($A,B\in \Cs^q$, $f,g\in\Cs^p$, $\Phi\in \Cs^m$, $0\leq q<p<m\leq n$): \\ 
$*_p: \xymatrix{A \rtwocell^f_g{\Phi}& B}\quad \mapsto \quad \xymatrix{A \rrtwocell^f_g{^{\quad \quad \quad \ \ {\Phi^{*_p}}}}& & B,}$
\quad  
$*_q: \xymatrix{A \rtwocell^f_g{\Phi}& B}\quad \mapsto \quad \xymatrix{A &  & \lltwocell_{f^{*_q}}^{g^{*_q}}{^{\quad \ \Phi^{*_q}}}B,}$
$*_{q p}: \xymatrix{A \rtwocell^f_g{\Phi}& B}\quad \mapsto \quad 
\xymatrix{A& & \lltwocell_{f^{*_q}}^{g^{*_q}}{\quad \quad \quad \ \ \ \Phi^{*_{q p}}}B,}$
\quad 
$*_{\varnothing}: \xymatrix{A \rtwocell^f_g{\Phi}& B}\quad \mapsto \quad 
\xymatrix{A \rrtwocell^f_g{\quad \Phi^{*_\varnothing}} & & B.}$
\end{itemize} 
For the general case of globular $n$-cells we have $2^n$ duals $*_\alpha$ (including the identity $*_\varnothing$) exchanging $q$-sources / $q$-targets for $q$ in an arbitrary set $\alpha\subset\{0,\dots,n-1\}$. In the previous diagrams, with some abuse of notation, we wrote $*_{p}$ for $*_{\{p\}}$ and $*_{q p}$ for $*_{\{q,p\}}$; this last duality can be realized as composition of $*_p$ and $*_q$. 

\medskip 

With the ideological point of view that an involution/duality in category theory should be considered, on the same level of the binary operations of composition, as a ``$1$-ary operation'' of the structure, we introduce the following definition of a strict involutive globular $n$-category via strict 
$n$-functors.

\begin{definition}
If $n\in \NN_0$, and $\alpha\subset\{0,\dots, n-1\}$, an \emph{$\alpha$-contravariant functor}\footnote{Notice that for $\alpha=\varnothing$ we recover the definition of covariant functor.} between two strict globular $n$-categories 
$(\Cs_1,\circ_0,\dots,\circ_{n-1})\xrightarrow{\phi}(\Cs_2,\hatcirc_0,\dots, \hatcirc_{n-1})$, is a map $\phi:\Cs_1\to\Cs_2$ 
such that:  
\begin{itemize}
\item 
for all $q\notin\alpha$,  
whenever $x\circ_q y$ exists, $\phi(x)\hatcirc_q\phi(y)$ also exists and in this case 
$\phi(x\circ_q y)=\phi(x)\hatcirc_q \phi(y)$, 
\item 
for all $q\in \alpha$, 
whenever $x\circ_q y$ exists, $\phi(y)\hatcirc_q\phi(x)$ also exists and in this case 
$\phi(x\circ_q y)=\phi(y)\hatcirc_q \phi(x)$, 
\item
if $e\in \Cs^q_1$ is a $\circ_q$-identity, $\phi(e)\in \Cs^q_2$ is a $\hatcirc_q$-identity. 
\end{itemize} 

An \emph{$\alpha$-involution} $*_\alpha$ on $(\Cs_,\circ_0,\dots,\circ_{n-1})$ is an $\alpha$-contravariant endofunctor such that $(x^{*_\alpha})^{*_\alpha}=x, \forall x\in \Cs$.  

If $\{*_\alpha\ | \ \alpha\in \Lambda\}$, with $\Lambda\subset\P(\{0,\dots,n-1\})$ (the power-set of $\{0,\dots,n-1\}$) is a family of commuting $\alpha$-involutions, the strict globular $n$-category is said to be \emph{$\Lambda$-involutive}.
\end{definition}

In practice, an $\alpha$-involution is an involution that is a unital homomorphism for all $\circ_q$-compositions with $q\notin \alpha$ and a unital anti-homomorphism for $\circ_q$-compositions with $q\in \alpha$.  

\medskip 

Whenever the family $\alpha\subset \{0,\dots,n-1\}$ consists of a singlet $\alpha=\{q\}$, we will simply use the notation $*_q:=*_{\{q\}}$ and in this particular case we will make use of the following terminology: 
\begin{definition}
Let $(\Cs_,\circ_0,\dots,\circ_{n-1})$ be a strict $n$-category and let $q\in\{0,\dots,n-1\}$.  
We say that $(\Cs_,\circ_0,\dots,\circ_{n-1})$ is equipped with an \emph{involution over $q$-arrows}, if there exists an \hbox{$\{q\}$-involution} i.e.~a map $*_q:\Cs\to\Cs$ such that 
\begin{itemize}
\item 
for all $p\neq q$ if $(x\circ_p y)^{*_q}$ exists, $x^{*_q}\circ_p y^{*_q}$ also exists and they coincide, 
\item 
for $p=q$, if $(x\circ_p y)^{*_q}$ exists, $y^{*_q}\circ_p x^{*_q}$ also exists and they coincide, 
\item 
for all $p$, 
if $x$ is a $p$-identity, $x^{*_q}$ is also a $\circ_p$-identity, 
\item
$(x^{*_q})^{*_q}=x$ for all $x\in \Cs$.  
\end{itemize}
The involution $*_q$ is said to be \emph{Hermitian} if:  
\begin{itemize}
\item
$x^{*_q}=x$ whenever $x$ is a $\circ_q$-identity. 
\end{itemize}
If the strict $n$-category is $\Lambda$-involutive and $\{q\},\{p\}\subset \Lambda$, we further impose the commutativity condition: 
\begin{itemize}
\item
$(x^{*_q})^{*_p}=(x^{*_p})^{*_q}$, for all $x\in \Cs$. 
\end{itemize}
A \emph{fully involutive strict $n$-category} is a strict $n$-category that is equipped with a $q$-involution for every $q=0,\dots,n-1$.  
A strict $n$-category is \emph{partially involutive} if it is equipped with only a proper subset of the family of involutions $*_q$, for $q=0,\dots,n-1$. 
\end{definition} 
It is immediate to check that, for all $x\in \Cs^m$, $s^m_q(x^{*_q})=t^m_q(x)$, $t^m_q(x^{*_q})=s^m_q(x)$ and $s^m_p(x^{*_q})=s^m_p(x)$, $t^m_p(x^{*_q})=t^m_p(x)$, when $p\neq q$. Similarly $\iota^m_p(x^{*_q})=(\iota^m_p(x))^{*_q}$. If $x\in \Cs^p$ and the $\{q\}$-involution $*_q$ is Hermitian, we also have $x^{*_q}=x$, for all $q\geq p$. 

\begin{remark}\label{rem: conj}
As a specific illustration of the previous definitions and also in view of a more direct comparison with the already existing literature on 2-C*-categories (in section~\ref{rem: conjugates}), we present here a detailed list of the properties required on a fully involutive 2-category $(\Cs,\otimes,\circ,\cj{\ \ },*)=(\Cs,\circ_0,\circ_1,*_0,*_1)$.
\\ 
This is a 2-category $(\Cs,\otimes,\circ )$ with two involutions, $\cj{\ \ }$ over objects, and $*$ over 1-arrows, such that: 
\begin{gather*}
(x^*)^*=x, 
\quad 
(x\otimes y)^*=x^*\otimes y^*,
\quad 
(x\circ y)^*=y^*\circ x^*, 
\\ 
e^*\in \Cs^1, \ \text{for all $\circ$-identities} \ e\in \Cs^1,   
\quad 
e^*\in \Cs^0, \ \text{for all $\circ$-identities} \ e\in \Cs^0,    
\\  
\cj{(\cj{x})}=x, 
\quad 
\cj{(x\circ y)}=\cj{x}\circ \cj{y}, 
\quad 
\cj{(x\otimes y)}=\cj{y}\otimes \cj{x},
\\  
\cj{e}\in\Cs^1 \ \text{for all $\otimes$-identities} \  e\in \Cs^1, 
\quad 
\cj{e}\in \Cs^0, \ \text{for all $\circ$-identities} \ e\in \Cs^0,    
\\ 
\cj{(x^*)}=(\cj{x})^*. 
\end{gather*}
The Hermitianity of $*$ means $e^*=e$, for all $e\in \Cs^1$; the Hermitianity of $\cj{\ \ }$ means $\cj{e}=e$, for all $e\in \Cs^0$.

The Hermitianity of $\cj{\ \ }$ is trivially satisfied when $\Cs^0$ consists of only one element, i.e.~in the case of a monoidal (tensorial) category; furthermore, in this case, for all $e\in \Cs^0$, $e^*=e$. 
\xqed{\lrcorner}
\end{remark}

\begin{remark}
If a strict $n$-category is $\Lambda$-involutive and $\alpha,\beta\in \Lambda$, then it is also $(\alpha\Delta\beta)$-involutive\footnote{Here $\Delta$ denotes the set-theoretic symmetric difference.} with involution 
$*_{\alpha\Delta\beta}:=*_\alpha\circ *_\beta$ and hence the strict $n$-category is actually $<\Lambda>$-involutive, where the symbol $<\Lambda>\ \subset\P(\{0,\dots,n-1\})$ denotes the family of sets generated by the symmetric difference of sets of $\Lambda$. 
This is actually an abelian group under set-difference that is isomorphic to the group of ``automorphisms'' generated by 
$\{*_\alpha \ | \ \alpha\in \Lambda\}$.   
The maximal abelian group obtainable in this way consists of $(\P(\{0,\dots,n-1\}),\Delta)$, it has cardinality $2^n$ and has a very convenient set of generators given by $\{\{q\} \ | \ q=0,\dots,n-1  \}$ corresponding to the involutions $\{*_q \ | \ q=0\dots, n-1\}$ described here above.    

A strict $n$-category is fully involutive if it is equipped with a family of involutions 
$\{*_\alpha\ | \ \alpha\in \Lambda\}$ that generates such a maximal abelian group with $n$-generators (that is always isomorphic to $\ZZ_2^n$). 
\xqed{\lrcorner}
\end{remark}

\begin{remark}
In principle it is perfectly possible for a strict $n$-category to be equipped with different (commuting) involutions $*_\alpha,\dag_\alpha$ with the same covariance $\alpha$. In this case we say that the involutive strict $n$-category has \emph{involutive multiplicity}. In our treatment here, we assume that $\alpha\mapsto *_\alpha$ is a map, since we are only interested in the internal self-duality of the strict $n$-category, rather than its ``dual-multiplicity''.  
\xqed{\lrcorner}
\end{remark}

\begin{remark}
For $\alpha\neq\varnothing$, a strict globular $n$-category $\Cs$ is $\alpha$-involutive with $\alpha$-involution $*_\alpha$ if and only if $(\Cs, *_\alpha)$ is an $\alpha$-dual of $\Cs$, i.e.~$\Cs$ is ``$\alpha$-self-dual''.\footnote{This means that the pair $(\Cs,*_\alpha)$ satisfies the following universal factorization property: for every $\alpha$-contravariant functor $\phi:\Cs\to\Ds$ into another strict globular $n$-category $\Ds$, there exists a unique covariant functor $\hat{\phi}:\Cs\to\Ds$ such that $\phi=\hat{\phi}\circ *_\alpha$.}   
It is in this sense that ($\alpha$-)involutions on a strict globular $n$-category provide a way to ``internalize'' the ($\alpha$-)dualities.  
\xqed{\lrcorner}
\end{remark}

The previous remark is fundamental to understanding our choice of formalization of the definition of ``fully involutive higher category'': we are requesting the self-dualizability of the category for all possible choices of $\alpha$-duals, selecting a minimal family of $\alpha$-involutions that are adequate for the purpose. 

\subsection{Examples of Strict Involutive Higher Categories}

The most elementary examples of strict involutive $n$-categories come from a \emph{strictification} of the usual (weak) $n$-categories of higher spans~\cite{Be,n-Lab2} (``bipartite quivers'' between pairs sets). 
\begin{example}\label{ex: spans}
The strict 1-category of relations between sets, with the operation of composition of relations, is involutive when we define, for every relation $f\subset A\times B$ from the set $A$ to the set $B$, its \emph{reciprocal relation} 
$f^*:=\{(b,a) \ | \ (a,b)\in f \}\subset B\times A$ from $B$ to $A$.\footnote{The 1-category of functions between sets is not involutive, since the reciprocal relation of a function, usually is not a function.} 

\medskip 

To generalize this basic example to arbitrary level-$n$, it is convenient to be able to possibly consider different ``links'' between the same pair of elements $a\in A$ and $b\in B$. For this purpose we consider a \emph{1-span} from $A$ to $B$ i.e.~a pair of maps $A\xleftarrow{s} R \xrightarrow{t} B$. 
Each element $r\in R$ is interpreted as an arrow connecting its source point $s(r)\in A$ to its target point $t(b)\in B$.

\medskip 

Construct now the free 1-category $[\Qs]$ generated by the 1-graph $\Qs$ consisting of a certain family of 1-spans between sets.\footnote{If $(R_\alpha,s_\alpha,t_\alpha)$, for $\alpha\in \Lambda$ is a family of bipartite 1-spans, $\Qs:=\uplus_{\alpha\in \Lambda}R_\alpha$ is the index set of edges of an oriented \hbox{1-multigraph}, possibly with loops, whose source and targets are the unique maps $s,t$ with restrictions on $R_\alpha$ coinciding with $s_\alpha,t_\alpha$, for all $\alpha\in \Lambda$ and vertex set $\cup_{\alpha\in \Lambda}(A_\alpha\cup B_\alpha)$.} 
The 1-arrows in $[\Qs]$ are finite sequences\footnote{We include here, for $k=0$, one ``empty'' sequence $(a,a)$, for all $a\in \Qs^0$.} $(a_1,r_1,\dots,r_k,a_k)$, such that for all $j=2,\dots,k\in\NN$, $s(r_{j-1})=t(r_j)$, with source $a_k:=s(r_k)$ and target $a_1:=t(r_1)$ and with composition given by the concatenation of finite sequences defined as follows: 
\begin{equation*}
(a_1,r_1,\dots,r_k,a_k)\circ(a_{k+1},r_{k+1},\dots,r_{k+l},a_{k+l}):=(a_1,r_1,\dots,r_{k+l},a_{k+l}), \quad \text{only when $a_k=a_{k+1}$.}
\end{equation*}  
The strict 1-category $[\Qs]$ contains a disjoint copy of each 1-span of the original family and can be used to ``strictify'' the usual composition of spans obtaining a strict 1-category of 1-spans. 

Such a strict \hbox{1-category} is not yet involutive. To obtain an involutive strict category, notice that every 1-span 
$A\xleftarrow{s} R \xrightarrow{t} B$ has a \emph{dual 1-span} $B\xleftarrow{\cj{s}} \cj{R} \xrightarrow{\cj{t}} A$, with  $\cj{R}:\{\cj{r}\ | \ r\in R\}$ a disjoint copy of $R$ and where $\cj{s}(\cj{r}):=t(r)$, $\cj{t}(\cj{r}):=s(r)$. 
The strict 1-category $[\Qs\cup\cj{\Qs}]$ generated by the union of the original family of 1-spans and their dual 1-spans is now naturally equipped with an involution given by: $(a_1,x_1,\dots,x_k,a_k)^*:=(a_k,{x^*}_k,\dots,{x^*}_1,a_1)$, where $x^*:=\cj{x}$ if $x\in \Qs$, and $x^*:=\cj{x}$ if $x\in\cj{Q}$. Since the strict involutive 1-category $[\Qs\cup\cj{\Qs}]$ contains disjoint copies of the original spans (and their duals), it can be used to define a ``strictified'' notion of involution in the category of spans obtaining a strict involutive 1-category of 1-spans. 

\medskip 

A \emph{$n$-span} is a sequence of 1-spans 
$R^{(n)}\rightrightarrows R^{(n-1)}\rightrightarrows \cdots\rightrightarrows R^{(0)}$, where at each level $q=0\dots,n-1$, $R^{q}:=A^{(q)}\cup B^{(q)}$ and $B^{(q)}\xleftarrow{ \ t^{q+1}_{q}}R^{(q+1)}\xrightarrow{s^{q+1}_{q}}A^{(q)}$ is a 1-span from $A^{(q)}$ to $B^{(q)}$. 
We restrict the attention to \emph{globular $n$-spans} that are those $n$-spans that satisfy the globularity condition 
$s^{q+1}_q\circ s_{q+1}^{q+2} =s_q^{q+1}\circ t_{q+1}^{q+2}$ and 
$t_q^{q+1}\circ t_{q+1}^{q+2}=t_q^{q+1}\circ s_{q+1}^{q+2}$, for $q=0,\dots,n-2$. 

Every $n$-span $R$ admits $2^n$ different \emph{$\alpha$-dual $n$-spans} 
$R\xrightarrow{\cj{\phantom{..}}^\alpha}\cj{R}^\alpha$, for $\alpha\subset\{1,\dots,n\}$, consisting of a disjoint copy $\cj{R}^{\alpha (m)}$ of the sets $R^{(m)}$, for $m=0,\dots,n$, with sources and targets maps given, whenever $q\in \alpha$ by $\cj{s}^m_q(\cj{r}^\alpha):=t^m_q(r)$, $\cj{t}^m_q(\cj{r}^\alpha):=s^m_q(r)$; and whenever $q\notin \alpha$ by 
$\cj{s}^m_q(\cj{r}^\alpha):=s^m_q(r)$, $\cj{t}^m_q(\cj{r}^\alpha):=t^m_q(r)$. 

Given again a family of globular $n$-spans, we consider the globular $n$-graph $\Qs\cup \cj{\Qs}$, whose globular $n$-arrows belong to at least one of the $n$-spans in the given family or to one of their duals, and construct the \emph{free globular $n$-category} $\Qs\cup\cj{\Qs}\xrightarrow{\theta}[\Qs\cup\cj{\Qs}]$ generated by the globular $n$-span  $\Qs\cup\cj{\Qs}$.\footnote{A construction of the free globular strict $n$-category of a globular $n$-graph can be found in T.Leinster~\cite[appendix~F]{L}; a left-adjoint functor to the forgetful functor from globular $\omega$-categories to globular $\omega$-graphs is also described in J.Penon~\cite[proposition~1]{P}. 
Quotient constructions of free (involutive) $\omega$-categories over a globular $\omega$-graph are presented in~\cite{BB}.} 

The strict globular $n$-category $[\Qs\cup\cj{\Qs}]$ is naturally equipped with an $\alpha$-involution, for all 
$\alpha\in \{0,\dots,n\}$, obtained by the universal factorization property for the free $n$-category $\Qs\cup\cj{\Qs}\xrightarrow{\theta}[\Qs\cup\cj{\Qs}]$, from the covariant morphism of 
$n$-spans $\Qs\cup\cj{\Qs}\xrightarrow{\gamma_\alpha}\widehat{[\Qs\cup\cj{\Qs}]}^\alpha$ into (the underlying $n$-span of) the \emph{abstract \hbox{$\alpha$-dual} category}\footnote{
The abstract $\alpha$-dual $n$-category of a strict $n$-category $\Cs$ is an $\alpha$-contravariant functor $\Cs\xrightarrow{\widehat{\phantom{..}}^\alpha} \widehat{\Cs}^\alpha$ into a strict $n$-category $\widehat{\Cs}^\alpha$ that uniquely factorizes, via covariant functors, any $\alpha$-contravariant functor into another $n$-category.} 
$[\Qs\cup\cj{\Qs}]\xrightarrow{\widehat{\phantom{..}}^\alpha}\widehat{[\Qs\cup\cj{\Qs}]}^\alpha$, where 
$\gamma_\alpha:=\widehat{\phantom{..}}^\alpha\circ\theta\circ \cj{\phantom{..}}^\alpha$ is the composition of the 
$\alpha$-duality morphism of $n$-spans $\Qs\cup\cj{\Qs}\xrightarrow{\cj{\phantom{..}}^\alpha}\Qs\cup\cj{\Qs}$, first with the covariant inclusion morphism $\theta$ into the free $n$-category, and then with the $\alpha$-contravariant isomorphism of $n$-categories $[\Qs\cup\cj{\Qs}]\xrightarrow{\widehat{\phantom{..}}^\alpha}\widehat{[\Qs\cup\cj{\Qs}]}^\alpha$. 

The strict globular fully involutive $n$-category $[\Qs\cup\cj{\Qs}]$ contains a (disjoint) copy of every globular  $n$-span of the original family (and of each of their $\alpha$-duals) and can now be used to obtain a strictification of the usual weak (involutive) $n$-category of globular spans of sets. 
For this purpose, one simply considers the \emph{coarse graining} of $[\Qs\cup\cj{\Qs}]$ i.e.~the strict globular $n$-category $\Pf([\Qs\cup\cj{\Qs}])$ whose elements are the subsets of $[\Qs\cup\cj{\Qs}]$ under term-by-term compositions (and term-by-term involutions) and finally selects inside $\Pf([\Qs\cup\cj{\Qs}])$ the strict globular (involutive) $n$-category generated  by the disjoint family of the original $n$-spans (and their $\alpha$-duals) embedded into $[\Qs\cup\cj{\Qs}]$. 

\medskip 

The fully involutive category $\Rs$ of globular $n$-relations is just a special case of the construction above, since every relation $R\subset A\times B$  canonically determines a 1-span from $A$ to $B$ via the restriction of the Cartesian projection to $R$. Unfortunately, such strict globular $n$-category of relations is degenerate above $k=1$ because $\Rs^k=\Rs^1$ for $1<k\leq n$:\footnote{In practice all the $n$-cells $x\in \Rs^n$ coincide with higher identities corresponding to 1-cells: $x=\iota^n_1(x)$.} in fact the globularity condition imposed on the $n$-cells in $[\Rs^n]$ implies that if $((a_1,b_1),(a_2,b_2))\in R\in \Rs^2$, necessarily $a_1=a_2$, $b_1=b_2$ and so 2-cells (and similarly higher cells) in $[\Rs^n]$ are identities. 
This justifies the need to consider globular $n$-spans. 

Eliminating the globularity condition is not going to solve the problem: for non-globular $n$-spans the (non-commutative) exchange property is not satisfied. 
\xqed{\lrcorner} 
\end{example}

\begin{example} 
Strict globular $n$-groupoids (see the papers by R.Brown-P.Higgins~\cite{BH} and the their book with R.Sivera~\cite{BHS}) are of course a special case of fully involutive strict globular \hbox{$n$-categories}, where the role of the involutions is taken by the inverse maps. 
\xqed{\lrcorner} 
\end{example}

\begin{example}
Consider the class of involutive monoids (or even more specifically unital algebras) and the family of unital, not necessarily involutive, homomorphisms between them, with the operation of functional composition. The composition of unital homomorphisms is a unital homomorphism, the composition is associative and every monoid is equipped with an identity map that is a unital homomorphism that satisfies the identity property and hence we have a 1-category. 

The involution on the monoids can be used to introduce a covariant involution $\phi\mapsto \phi^*$ of 1-arrows: given two unital involutive monoids $(A_1,\cdot_1,\dag_1)$, $(A_2,\cdot_2,\dag_2)$ and a unital homomorphism $\phi:A_1\to A_2$, define $\phi^{*}(x):=\phi(x^{\dag_1})^{\dag_2}$, for all $x\in A_1$, note that $\phi^*:A_1\to A_2$ is another unital homomorphism (it coincides with $\phi$ if and only if $\phi$ is a $*$-homomorphism) and that the map $\phi\mapsto \phi^*$ is a covariant involution. This is an example of $\varnothing$-involution that is Hermitian (since it does not move the objects). 
\xqed{\lrcorner}
\end{example} 

\begin{example}\label{ex: intertwiners}
Consider now the 2-category whose 1-arrows are the unital $*$-homorphisms $A_1\xrightarrow{\phi}A_2$ of unital involutive monoids $(A_j,\cdot_j,\dag_j), j=1,2$, and whose 2-arrows $\phi\xrightarrow{e}\psi$, for $\phi,\psi:A_1\to A_2$, are the intertwiners of pairs of unital homomorphisms of unital involutive monoids, i.e.~those elements $e\in A_2$ such that $e\cdot_2\phi(x)=\psi(x)\cdot_2 e$, for all $x\in A_1$. 
Given three 1-arrows $\phi,\psi,\eta:A_1\to A_2$ and two intertwiners $\phi\xrightarrow{e_2}\psi\xrightarrow{e_1}\eta$, their composition over 1-arrows is given by $e_1\circ^2_1 e_2:=e_1\cdot_2 e_2$ that is an intertwiner from $\phi$ to $\eta$. 
Given instead $\phi_2\xrightarrow{e_2}\psi_2$ and $\phi_1\xrightarrow{e_1}\psi_1$, where $\phi_1,\psi_1:A_2\to A_3$ and $\phi_2,\psi_2:A_1\to A_2$, the composition over objects is given by $e_1\circ^2_0 e_2:=e_1\cdot_3\phi_1(e_2)$ that is an intertwiner from $\phi_1\circ \phi_2$ to $\psi_1\circ\psi_2$. An involution of 2-arrows over 1-arrows is obtained as follows: if $\phi\xrightarrow{e}\psi$ is an intertwiner from $\phi$ to $\psi$, the element $e^{\dag_2}\in A_2$ is an intertwiner from $\psi$ to $\phi$. In general there is no involution of 2-arrows over objects and so this is an example of a partially involutive strict 2-category, whose only involution is $*_\alpha$ with $\alpha=\{1\}$. 

Restricting to the case of intertwiners between unital $*$-isomorphisms of $*$-monoids, it is not difficult to check that one obtains a fully involutive 2-category, where the additional involutions over objects $*_\alpha$, with $\alpha=\{0\}$, is given, for every $\xymatrix{{A_1}\rtwocell^{\phi}_{\psi}{e}& {A_2}}$ by $\xymatrix{{A_2}\rtwocell^{\phi^{-1}}_{\psi^{-1}}{e^{*_\alpha}} &  {A_1}}$, with $e^{*_\alpha}:=\phi^{-1}(e^{\dag_2})$.
\xqed{\lrcorner}
\end{example}

\begin{example}\label{ex: intertwiners2} 
There is a \emph{horizontal categorification} of example~\ref{ex: intertwiners}.
An involutive \hbox{1-category} $(\Cs,\circ_0,*_0)$ is a horizontal categorification (a ``many objects'' version) of an involutive monoid. 

A family $\Cf^{(2)}:=\{(\Cs_k,\cdot^k_0,\dag^k_0) \ | \ k\in \Lambda\}$ of involutive 1-categories becomes a strict 1-category with \hbox{1-arrows} consisting of the $*$-functors $\Cs_1\xrightarrow{\phi,\psi} \Cs_2$, and a strict globular 2-category with 2-arrows $\phi\xrightarrow{e}\psi$ consisting of natural transformations (the horizontal categorification of intertwiners) $(e_o): o\mapsto e_o$, with $o\in \Cs^0_1$ and 
$e_o\in \Cs_2^1$. 
The category $\Cf^{(2)}$ is partially involutive, with involution of 2-arrows over 1-arrows given by $(e_o)^{*_1}=(e^{\dag^2})_o$, for all $o\in \Cs^0_1$, where $\xymatrix{{\Cs_1}\rtwocell^{\phi}_{\psi}{e}& {\Cs_2}}$ is a natural transformation. 

As in example~\ref{ex: intertwiners}, restricting to the case of invertible $*$ functors, the category $\Cf^{(2)}$ becomes a fully involutive strict 
globular 2-category. 
\xqed{\lrcorner}
\end{example}

It is natural to ask whether it is possible to embed (as an involutive 2-subcategory) the fully involutive 2-category of intertwiners of unital 
$*$-isomorphisms of $*$-monoids, in the previous example~\ref{ex: intertwiners}, into a fully involutive \hbox{2-category} including (as a non-fully involutive 2-subcategory) the 2-category of intertwiners between unital $*$-homomorphisms of $*$-monoids. 
In order to introduce the missing \hbox{$\alpha$-involutions}, for $\alpha=\{0\}$, we will have to generalize the notion of a unital \hbox{$*$-homomorphism} between monoids, along the same lines leading from functions to relations and to spans in example~\ref{ex: spans}. 

\begin{example} 
Every $*$-homomorphism $\phi:A_1\to A_2$ between involutive monoids uniquely determines a \emph{congruence} (and hence a span) of involutive monoids i.e.~$\phi:=\{(a_1,a_2)\in A_1\times A_2\ | \ a_2=\phi(a_1)\}$ is a unital involutive submonoid of the product involutive monoid $A_1\times A_2$. 
Since the reciprocal relation $\phi^*\subset A_2\times A_1$ of any congruence $\phi\subset A_1\times A_2$ is another congruence, we immediately obtain an involutive 1-category of congruences of involutive monoids.

\medskip 

More generally one has an involutive 1-category of spans $A_1\xleftarrow{s} \phi \xrightarrow{t} A_2$ of involutive monoids 
(cf.~example~\ref{ex: spans}), where the source and target maps are involutive morphisms of involutive monoids.  
\xqed{\lrcorner}
\end{example}

One might wonder whether it is possible to identify in this setting a notion of ``intertwiner'' between ``morphisms'' of $*$-monoids that is ``involutive'' and naturally produces a fully involutive 2-category (and later use such notion to generalize $n$-transfors in a way suitable for fully involutive higher categories). Although it is relatively easy to obtain notions of intertwining between spans that admit $*_1$-involutions, for $*_0$-involutions 
a more radical approach via ``relational bivariant intertwiners'' is needed. 

\begin{remark}
Let $A_1 \xleftarrow{s_\phi} \phi \xrightarrow{t_\phi}A_2$ and $A_1 \xleftarrow{s_\psi} \psi \xrightarrow{t_\psi}A_2$ be spans between a pair of $*$-monoids $A_1,A_2$. 

A \emph{bivariant intertwiner} $\xymatrix{A_1 \rtwocell^\phi_\psi{\Xi} & A_2}$ between $\phi$ and $\psi$ is any family $\Xi$ consisting of some quadruples $(e,\psi,\phi,f)$ with $(e,f)\in A_2\times A_1$ such that the following intertwining conditions hold:\footnote{The usage of quadruples $(e,\psi,\phi,f)\in\Xi$ instead of just pairs $(e,f)$ in the definition of $\Xi$ is necessary to avoid the possibility that the same bivariant intertwiner $\Xi$ might have different spans as source and/or target.} 
\begin{gather}\label{eq: biv-int}
f \cdot_1 s_\phi(x)=s_\psi(y)\cdot_1 f, \quad e \cdot_2 t_\phi(x)=t_\psi(y)\cdot_2 e, \quad \forall x\in \phi, \ \forall y\in \psi.  
\end{gather}

Note that, given a collection $\Qs$ of spans of involutive monoids, the family of all quadruples 
$\xymatrix{f\rtwocell^x_y{\xi} & e}$, with $\xi:=(e,y,x,f)\in A_2\times \psi\times\phi\times A_1$, such that the following intertwining conditions hold: 
\begin{gather*}
f \cdot_1 s_\phi(x)=s_\psi(y)\cdot_1 f, \quad e \cdot_2 t_\phi(x)=t_\psi(y)\cdot_2 e,   
\end{gather*}
becomes a strict double category (cubical 2-category) $[\Qs]^2$ under the following compositions: 
\begin{gather*}
(e_1,y_1,x_1,f_1)\circ_1(e_2,y_2,x_2,f_2):=(e_1\cdot_2 e_2, y_1,x_2,f_1\cdot_1f_2), \quad \text{whenever $y_2=x_1$,} 
\\
(e_3,y_3,x_3,f_3)\circ_0(e_4,y_4,x_4,f_4):=(e_3, (y_3, y_4),(x_3,x_4),f_4), \quad \text{whenever $f_3=e_4$}, 
\end{gather*}
where $(y_3,y_4), (x_3,x_4)$ denote the concatenations of composable paths in the fine grained category $[\Qs]^1$. 
The category $[\Qs]^2$ is a strict fully involutive double category (see the manuscript~\cite{BCM} for definitions and a detailed treatment), with involutions given by: 
\begin{gather*}
(e,y,x,f)^{*_1}:=(e,x,y,f), \quad \quad (e,y,x,f)^{*_0}:=(f,y,x,e), \quad \forall (e,y,x,f)\in [\Qs]^2. 
\end{gather*}

Making the harmless identification between $\Xi$ and 
$\{(e,y,x,f)\in[\Qs]^2 \ | \ (e,\psi,\phi,f)\in \Xi, \ x\in \phi, \ y\in \psi\}$, the strict fully involutive 2-category $\Qs^{(2)}$ of bivariant intertwiners between globular spans in $\Qs$ is obtained by ``coarse graining'' the previous fully involutive double category $[\Qs]^2$ i.e.~considering, for all pairs $\phi,\psi\in\Qs$ in globular position, those subsets 
$\Xi\subset [\Qs]^2$ consisting of quadruples $(e,y,x,f)\in [\Qs]^2$, with $x\in\phi$ and $y\in \psi$, satisfying property~\eqref{eq: biv-int} and defining all the compositions and involutions ``elementwise'', whenever such compositions exist: 
\begin{gather*}
\Xi_1\circ_1\Xi_2:=\{\xi_1\circ_1\xi_2 \ | \ (\xi_1\in \Xi_1, \xi_2\in \Xi_2\}, 
\quad 
\Xi_3\circ_0\Xi_4:=\{\xi_3\circ_0\xi_4 \ | \ (\xi_3\in \Xi_3, \xi_4\in \Xi_4\}, 
\\
(\Xi)^{*_1}:=\{\xi^{*_1} \ | \ \xi\in \Xi\}, \quad  (\Xi)^{*_1}:=\{\xi^{*_1} \ | \ \xi\in \Xi\}. 
\end{gather*}

Of course such a notion of bivariant intertwiner between spans of involutive monoids admits a horizontal categorification in the context of example~\ref{ex: intertwiners2}. 

A \emph{relational 1-transfor} $\xymatrix{{\Cs_1}\rtwocell^{\phi}_{\psi}{\Xi}& {\Cs_2}}$ between a pair $\phi,\psi$ of spans of involutive 1-categories $\Cs_1,\Cs_2$ consists of a family $\Xi$ of quadruples $(e,\psi,\phi,f)$, with $(e,f)\in\Cs_2\times\Cs_1$ satisfying intertwining conditions of the form~\ref{eq: biv-int} in the respective 1-categories $(\Cs_1,\circ_1),(\Cs_2,\circ_2)$: 
\begin{gather*}
\forall x\in \phi \ \text{such that} \ {}_0(s_\phi(x))=f_0, \ {}_0(t_\phi(x))=e_0, \quad 
\forall y\in \psi \ \text{such that} \ (s_\psi(y))_0={}_0f, \ (t_\psi(y))_0={}_0e, 
\\ 
f \circ_1 s_\phi(x)=s_\psi(y)\circ_1 f, \quad e \circ_2 t_\phi(x)=t_\psi(y)\circ_2 e.  
\end{gather*}
The construction of a fully involutive 2-category of such relational 1-transfors follows the same lines indicated above for the one-object case. 

\medskip 

Relational 1-transfors are a quite vast generalization of natural transformations: every natural transformation provide a relational 1-transfor, but even if $\phi,\psi:\Cs_1\to\Cs_2$ are a pair of usual functors, a relational 1-transfor $\Xi:\phi\to\psi$ is not necessarily a natural transformation. 
Natural transformations are recovered if and only if, for all $A\in \Cs_1^0$ there exists one and only one $e_A\in\Cs_2^1$ such that $(e_A,\psi,\phi,\iota_A)\in \Xi$. For a general relational 1-transfor $\Xi$ between functors, it is neither assured that such an element $e_A$ exists, nor that it is unique. 

\medskip 

Recursively, in the case of strict globular $n$-categories, \emph{relational $n$-transfors} can be similarly defined and one can recover versions of theorem~\ref{th: iso-n-tr} for spans, without restricting to invertible $*$-functors. 
\xqed{\lrcorner}
\end{remark}

In the subsequent treatment (for simplicity) we will confine the discussion to the case of bijective spans, i.e.~$*$-isomorphisms of $*$-monoids, and further proceed to horizontal (natural transformations) and vertical categorification ($n$-transfors) in this particular case, see theorem~\ref{th: iso-n-tr}. 
In remark~\ref{rem: mor} we will further generalize the notion of intertwining via morphisms of bimodules (over involutive monoids) and we will mention how (at least for intertwiners between $*$-isomorphisms) the present 2-categories are embedded in categories of bimodules. 

The main motivation for the introduction of such generalized forms of intertwiners can be found in the attempt to prove an involutive analogue of theorem~\ref{th: n-tr} leading to a recursive construction of (fully) involutive higher categories via ``relational $n$-transfors'' (cf~\ref{prop: nlr} for a partially involutive case). 
For now, we examine the \emph{vertical categorification} of example~\ref{ex: intertwiners} restricting our considerations to the case of isomorphisms (further generalizations will be dealt with elsewhere). 

\begin{theorem}\label{th: iso-n-tr}
The family of small totally involutive $n$-categories with strict $n$-transfors, between invertible $*$-functors, constitutes a fully involutive $(n+1)$-category. 
\end{theorem}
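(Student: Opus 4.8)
The plan is to build on Theorem~\ref{th: n-tr}, which already equips the family of $n$-transfors between small strict globular $n$-categories with the structure of a strict $(n+1)$-category $\mathcal{N}$, with compositions $\circ_0,\dots,\circ_n$. Restricting the $0$-cells to small totally involutive $n$-categories and the $1$-cells (the $0$-transfors) to \emph{invertible} $*$-functors yields a strict $(n+1)$-subcategory, and the entire content of the theorem is to endow this subcategory with a mutually commuting family of $n+1$ involutions $*_0,\dots,*_n$ realizing full involutivity. First I would recall the recursive description of the cells: a $k$-transfor $\Xi$ between functors $\phi,\psi\colon\Cs\to\hat{\Cs}$ is a map $\Xi\colon\Cs^0\to\hat{\Cs}^k$ subject to the intertwining and globularity conditions, so that the composition $\circ_0$ of $\mathcal{N}$ is the ``outer'' composition over $0$-cells (horizontal composition, or whiskering through functors), while on the values $\Xi(A)$ the composition $\circ_{p+1}$ of $\mathcal{N}$ is implemented by the composition $\hat{\circ}_p$ of the target $n$-category, for $p=0,\dots,n-1$.

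Next I would define the involutions. For $p=0,\dots,n-1$ the involution $*_{p+1}$ on $\mathcal{N}$ is induced \emph{pointwise} by the target involution $*_p$, setting $\Xi^{*_{p+1}}(A):=\bigl(\Xi(A)\bigr)^{*_p}$ for every $A\in\Cs^0$ (the index shift keeps the notation unambiguous); these are the ``easy'' involutions inherited level by level from the total involutivity of the codomain categories. The remaining, genuinely new, involution is $*_0$, defined on $1$-cells simply as functor inversion $\phi\mapsto\phi^{-1}$, which is legitimate precisely because we have restricted to invertible $*$-functors, and, on higher transfors, as the combination of this inversion with the top target involution --- the general-level version of the formula $e^{*_0}=\phi^{-1}(e^{\dag})$ already exhibited for $n=1$ in example~\ref{ex: intertwiners}. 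I would then check the defining axioms. Each $*_{p+1}$ is a $\{p+1\}$-involution because $*_p$ is a $\{p\}$-involution on the target: contravariance with respect to $\circ_{p+1}$ and covariance with respect to every other $\circ_r$ are the level-shifted images of the corresponding properties of $*_p$, while involutivity and Hermitianity are inherited verbatim. The involution $*_0$ is a $\{0\}$-involution because inversion reverses the order of $\circ_0$-composition and squares to the identity. Commutativity $*_\alpha\circ *_\beta=*_\beta\circ *_\alpha$ then splits into two cases: for $p,p'\geq 1$ it follows from the commutativity of $*_{p-1}$ and $*_{p'-1}$ guaranteed by full involutivity of the target, whereas the commutation of $*_0$ with each $*_{p+1}$ is exactly the statement that inversion commutes with applying the target involutions, which holds because the $1$-cells are $*$-functors.

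The hard part will be everything concerning $*_0$. Giving a consistent definition of $*_0$ on transfors of \emph{every} level is delicate because inversion swaps source and target: $\Xi^{*_0}$ must be a $k$-transfor between the inverse functors $\phi^{-1},\psi^{-1}\colon\hat{\Cs}\to\Cs$, so for $B\in\hat{\Cs}^0$ one has to produce $\Xi^{*_0}(B)\in\Cs^k$ by transporting $\Xi(\phi^{-1}(B))$ back through $\phi^{-1}$ and applying the appropriate involution, and then verify that the intertwining identities and the globular source/target relations survive this exchange of roles. This is precisely the point foreshadowed in examples~\ref{ex: intertwiners} and~\ref{ex: intertwiners2}, where passing to invertible $*$-functors was forced in order to obtain the involution over objects. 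I expect the cleanest rigorous route is induction on $n$ (equivalently, on the transfor level $k$), mirroring the inductive proof of Theorem~\ref{th: n-tr}: the base case $n=1$ is the fully involutive $2$-category of natural transformations of invertible $*$-functors established in example~\ref{ex: intertwiners2}, and in the inductive step the compatibility of $*_0$ with the outer composition $\circ_0$ and with the inherited involutions $*_1,\dots,*_n$ reduces, after unwinding the recursive definition of a transfor, to the already verified lower-dimensional cases together with the $*$-functoriality of the $1$-cells.
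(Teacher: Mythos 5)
Your strategy coincides with the paper's: the underlying strict $(n+1)$-category is taken from theorem~\ref{th: n-tr}, the involutions $*_1,\dots,*_n$ are defined componentwise through the involutions $\hat{\dag}_0,\dots,\hat{\dag}_{n-1}$ of the codomain, the genuinely new ingredient is the involution over objects $*_0$ built from functor inversion (which is where the restriction to invertible $*$-functors is used), and the whole construction is organized as an induction on $n$ whose base case is the fully involutive $2$-category of example~\ref{ex: intertwiners2}. So in outline this is exactly the paper's proof.

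There is, however, one concrete mis-specification that would make a step fail if taken literally: you say that on higher transfors $*_0$ combines inversion with the \emph{top} target involution, i.e.\ $\hat{\dag}_{n-1}$. The paper's formula is
\begin{equation*}
\Xi^{*_0}(B):=(\Phi^{(0)})^{-1}\Big(\Xi\big((\Psi^{(0)})^{-1}(B)\big)^{\hat{\dag}_0}\Big), \qquad B\in\hat{\Cs}^0,
\end{equation*}
which uses the target involution \emph{over objects} $\hat{\dag}_0$; for $n=1$ (your reference case, example~\ref{ex: intertwiners}) the two choices coincide, so that example cannot disambiguate them. The choice $\hat{\dag}_0$ is forced, for two reasons. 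First, the intertwining identities defining a transfor are expressed exclusively through $\hatcirc_0$, and only a $\hatcirc_0$-\emph{contravariant} involution reverses them in the way needed for the transported family to be a transfor between the inverse functors $(\Phi^{(0)})^{-1}$ and $(\Psi^{(0)})^{-1}$ — this is precisely the manipulation hidden in your $n=1$ computation $e^{*_0}=\phi^{-1}(e^{\dag})$. Second, $*_0$ must be covariant with respect to $\circ_{k+1}$ for every $k=0,\dots,n-1$, and $\circ_{k+1}$ is implemented componentwise by $\hatcirc_k$; since $\hat{\dag}_j$ is contravariant exactly with respect to $\hatcirc_j$, any choice $j\geq 1$ produces an order reversal at level $j$ that no intertwining relation can repair, so with $j=n-1$ (and $n\geq 2$) the resulting map would come out contravariant with respect to $\circ_n$, i.e.\ a $\{0,n\}$-involution rather than a $\{0\}$-involution, and full involutivity would fail. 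With $j=0$ the contravariance is absorbed by the intertwining relations, and the rest of your plan (componentwise involutions, commutativity checks, induction on $n$) goes through as in the paper.
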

\begin{proof}
We already know that taking (as objects) involutive 1-categories $(\Cs,\cdot,\dag),(\hat{\Cs},\hatcdot,\hat{\dag})$, with invertible $*$-functors 
$\Phi,\Psi:\Cs\to\hat{\Cs}$ (as 1-arrows), natural transformations (1-transfors) consist of intertwiners $\Xi:\Cs^0\to\hat{\Cs}^1$. In this way, the family of 1-transfors $\Cf^{(1)}$ constitutes a 2-category $(\Cf^{(1)},\circ_0,\circ_1,*_0,*_1)$ that is fully involutive with involutions given by: 
\begin{gather*}
\xymatrix{{\Cs} \rtwocell^{\Phi}_{\Psi}{\Xi}& {\hat{\Cs}}}\mapsto 
\xymatrix{{\Cs}\rtwocell^{\Phi}_{\Psi}{^\Xi^{*_1}\ }& {\hat{\Cs}}}, 
\quad \Xi^{*_1}(A):=\Xi(A)^{\hat{\dag}}, \quad \forall A\in \Cs^0, 
\\
\xymatrix{{\Cs}\rtwocell^{\Phi}_{\Psi}{\ \Xi}& {\hat{\Cs}}}\mapsto 
\xymatrix{\Cs & \ltwocell^{\Phi^{-1}}_{\Psi^{-1}}{^{\ \ \Xi^{*_0}}} {\hat{\Cs}}}, 
\quad \Xi^{*_0}(A):=\Phi^{-1}(\Xi(A)^{\hat{\dag}}), \quad \forall A\in \Cs^0. 
\end{gather*}
Suppose now, by induction, that we have a fully involutive $n$-category $(\Cf^{(n)},\circ_0,\dots,\circ_{n-1},*_0,\dots,*_{n-1})$ whose objects are small totally involutive \hbox{$n$-categories} $(\Cs,\cdot_0,\dots,\cdot_{n-1},\dag_0,\dots,\dag_{n-1})$, 
with 1-arrows the invertible $*$-functors (the $*$-isomorphisms of involutive $n$-categories) and $n$-arrows the strict $n$-transfors. 

Consider the globular $n$-cells $\xymatrix{{\Phi^{(0)}(A)} \rrtwocell^{\Phi^{(k)}(A)}_{\Psi^{(k)}(A)}{\quad \quad \Xi(A)}& & {\Psi^{(0)}(A)}}$ in $\hat{\Cs}$, for all $A\in \Cs^0$, where $\Xi:\Cs^0\to\hat{\Cs}^n$ is an $n$-transfor between $k$-transfors $\Phi^{(k)},\Psi^{(k)}$, for $k=0,\dots,n-1$, between invertible $*$-functors $\Phi^{(0)},\Psi^{(0)}$ from $\Cs$ to $\hat{\Cs}$. 
The $(n+1)$-cells in $\Cf^{(n)}$ are defined as $\xymatrix{\Cs \rrtwocell^{\Phi^{(k)}}_{\Psi^{(k)}}{\quad \Xi}& & \hat{\Cs}}$, for $k=0,\dots,n-1$. 

By theorem~\ref{th: n-tr}, we know that $\Cf^{(n)}$ with the $n$-transfors between (invertible) $*$-functors is already a strict globular $(n+1)$-category. Since, by induction we already have $n$-involutions of $n$-transfors $*_0,\dots,*_{n-1}$ in $\Cf^{(n)}$, to complete the proof, we only need to provide an involution over objects $*_0$, commuting with the previous involutions (and verify its covariance/contravariance properties). 

For this purpose, define $*_0: \Cf^{(n)}\to\Cf^{(n)}$ 
\begin{equation*}
*_0: \xymatrix{\Cs \rrtwocell^{\Phi^{(0)}}_{\Psi^{(0)}}{\quad \Xi}& & \hat{\Cs}}\mapsto 
\xymatrix{\Cs & & \lltwocell^{\Psi^{(0)}}_{\Phi^{(0)}}{^{\quad \Xi^{*_0}}} \hat{\Cs}},   
\end{equation*}
where $\Xi^{*_0}:\hat{\Cs}^0\to\Cs^n$ is given by  $\Xi^{*_0}(B):=(\Phi^{(0)})^{-1}(\Xi((\Psi^{(0)})^{-1}(B))^{\hat{\dag}_0})$, for all $B\in \hat{\Cs}^0$
(for $k=1,\dots,n-1$, $\Phi^{(k)}$ is the $k$-source and $\Psi^{(k)}$ is the $k$-target of $\Xi^{*_0}$). 
\end{proof}

\begin{example} 
Anticipating somehow the material developed later on in section~\ref{sec: hyper-conv}, whenever $(\As,\cdot,\dag)$ is a commutative $*$-monoid and $(\Xs,\circ_0,\dots,\circ_{n-1},*_0,\dots,*_{n-1})$ is a finite $n$-groupoid (or more generally a fully involutive $n$-category), the set $\Es:=\As\times\Xs$ becomes a fully involutive $n$-category with the following compositions and involutions, for $k=0,1$: 
\begin{gather*}
a_x \circ_k b_y:=(a\cdot b)_{x\circ_k y}, 
\quad 
(a_x)^{*_k}:=(a^\dag)_{x^{*_k}}, 
\end{gather*}
where we use the notation $a_x:=(a,x)\in\Es$. 
\xqed{\lrcorner}
\end{example} 

\subsection{Strict Involutive 2-Categories and Conjugations} \label{rem: conjugates} 

As promised in remarks~\ref{rem: conjugates 1} and~\ref{rem: conj}, we are going to discuss here in some more detail how fully involutive categories are related to the well-known notion of ``conjugation'' introduced in algebraic quantum field theory and constantly used in the theory of superselection sectors (see~\cite[section~III]{DHR}, \cite[section~3]{R2} and~\cite[section~2 and~7]{LR}). 
The study of fully involutive 2-C*-categories obtained in this way, will be completed later in 
example~\ref{rem: conjugates2}. 
Several of the properties of the conjugation maps (and of their associated involutions over objects) that are mentioned in this section, appeared already in~\cite{LR} and have also been used in previous works by C.Pinzari-J.E.Roberts~\cite{PR0}. 

\medskip 

For a more straightforward comparison with the formulas in the literature on superselection theory, we are using the ``reversed notation'' for the composition over objects in a 2-category $(\Cs,\otimes,\circ)$ and the usual notation for the composition over 1-arrows: $x\otimes y:=y\circ_0 x$ and $x\circ y:=x\circ_1 y$ for $x,y\in \Cs$. 

\medskip 

A generalized notion of right (left) conjugation for a pair $(x,y)$ of 1-arrows could actually be defined in the setting of (strict) 2-categories without involutions (or C*-structure) and consists in providing a specific adjunction  
$(-\otimes x) \dashv (-\otimes y)$ (respectively $(-\otimes x) \vdash (-\otimes y)$), between the partially defined functors $-\otimes x$ and $-\otimes y$, satisfying additional properties as described (for monoidal categories) in H.Lindner~\cite[proposition~5]{Lin}; this is further investigated in detail in the companion paper~\cite[section 3.2]{BCM}.

\medskip 

Here we will assume (as it is always the case in superselection theory) the existence of a strict involution $*$ over 1-arrows, hence $(\Phi\otimes \Psi)^*=\Phi^*\otimes \Psi^*$ and $(\Phi\circ\Psi)^*=\Psi^*\circ \Phi^*$ for all 
$\Phi,\Psi\in \Cs$, and adopt an ``intrinsic'' definition of conjugation as described below. 

In a strict 2-category (in particular in a strict monoidal category) equipped with a strict involution over 1-arrows $(\Cs,\otimes,\circ,*)$, a pair 
$x,\cj{x}\in\Cs^1$ of \hbox{1-arrows} $\xymatrix{ \rtwocell^{x}_{\cj{x}}{'} &}$, with $t(x)\xrightarrow{\cj{x}}s(x)$, are said to be \emph{conjugate} if there exist a pair of 2-arrows $R_x$, $\cj{R}_x$ that satisfy these 
\emph{conjugate equations}:   
\begin{gather*}
\xymatrix{\rtwocell^{\iota^1(t(x))}_{\cj{x}\otimes x}{\ R_x} &}, \
\xymatrix{\rtwocell^{\iota^1(s(x))}_{x\otimes \cj{x}}{\ \cj{R}_x}&}, \quad   
(\cj{R}^*_x\otimes \iota^2(x))\circ (\iota^2(x)\otimes R_x)=\iota^2(x), \quad  
(R^*_x\otimes \iota^2(\cj{x}))\circ (\iota^2(\cj{x})\otimes \cj{R}_x)=\iota^2(\cj{x}).
\end{gather*}

If, as we assume, the involution $*$ is Hermitian, the conjugation equations are equivalently rewritten as: 
\begin{gather*}
(\iota^2(x)\otimes R^*_x)
\circ 
(\cj{R}_x\otimes \iota^2(x))
=\iota^2(x), 
\quad  
(\iota^2(\cj{x})\otimes \cj{R}^*_x)
\circ 
(R_x\otimes \iota^2(\cj{x}))
=\iota^2(\cj{x}).
\end{gather*}
If $x,\cj{x}$ are conjugates, there are in general several pairs $(R_x,\cj{R}_x)$ of 2-arrows that satisfy the conjugate equations; on the other side, any pair $(R_x,\cj{R}_x)$ that satisfies the conjugate equations determines a unique pair $(x,\cj{x})$ of conjugate 1-arrows. 
The relation of conjugation is symmetric:\footnote{Notice that, for this statement, the existence of the involution $*$ over 1-arrows is crucial.}
if $(x,\cj{x})$ are conjugates, via $(R_x,\cj{R}_x)$, then also $(\cj{x},x)$ are conjugates via $(\cj{R}_x,R_x)$. 

\medskip 

If we assume all the 1-arrows in $\Cs$ to be conjugable (or alternatively we consider the full subcategory $\Cs_f$ of those 2-arrows in $\Cs$ with conjugable source and target), we can always choose a specific \emph{conjugation map} $x\mapsto (R_x,\cj{R}_x)$.  
Under this choice, we can define two \emph{folding maps} on 2-arrows $\Phi\in \Cs^2$:
\begin{align*}
&\xymatrix{A\rtwocell^x_y{\Phi}&B} \mapsto \xymatrix{B\rtwocell^{\cj{y}}_{\cj{x}}{\ \Phi_\bullet}&A}
&
\Phi_\bullet		&:=(\iota^2(\cj{x})\otimes \cj{R}^*_y)\circ (\iota^2(\cj{x})\otimes \Phi\otimes \iota^2(\cj{y}))\circ (R_x\otimes \iota^2(\cj{y})), 
\\
&\xymatrix{A\rtwocell^x_y{\Phi}&B} \mapsto \xymatrix{B\rtwocell^{\cj{y}}_{\cj{x}}{\ {}_\bullet\Phi}&A}
& 
{}_\bullet\Phi	&:=(R^*_y\otimes \iota^2(\cj{x}))\circ (\iota^2(\cj{y})\otimes \Phi\otimes \iota^2(\cj{x}))\circ (\iota^2(\cj{y})\otimes \cj{R}_x),  
\end{align*}
and hence two additional ``pseudo-involutions'' of 2-arrows over objects: 
$\xymatrix{B\rtwocell^{\cj{x}}_{\cj{y}}{\ \Phi^\dag}&A}$, 
$\xymatrix{B\rtwocell^{\cj{x}}_{\cj{y}}{\ \Phi^\ddag}&A}$, 
\begin{gather*}
\Phi^\dag:=(\Phi^*)_\bullet=
(\iota^2(\cj{y})\otimes\cj{R}_x^*)\circ (\iota^2(\cj{y})\otimes \Phi^*\otimes \iota^2(\cj{x}))\circ(R_y\otimes\iota^2(\cj{x})), 
\\
\Phi^\ddag:={}_\bullet(\Phi^*)=
(R^*_x\otimes\iota^2(\cj{y}))\circ(\iota^2(\cj{x})\otimes\Phi^*\otimes\iota^2(\cj{y}))\circ(\iota^2(\cj{x})\otimes \cj{R}_y). 
\end{gather*}
The map $\Phi\mapsto \Phi^\dag$ is the one actually considered by R.Longo-J.E.Roberts~\cite[lemma~2.3]{LR} and here we would like to further explore under which additional conditions $\dag$ (and similarly $\ddag$) can be taken as an involution over objects and hence provide a further example of fully involutive 
2-category $(\Cs,\otimes,\circ,*,\dag)$. 

\medskip 

The folding maps always satisfy the following $\circ$-contravariant properties: 
\begin{equation}\label{eq: circ-contra}
(\Phi\circ\Psi)_\bullet=\Psi_\bullet\circ\Phi_\bullet, \quad {}_\bullet(\Phi\circ\Psi)={}_\bullet\Psi\circ{}_\bullet\Phi, 
\quad 
\text{for} \quad  \xymatrix{A\ruppertwocell^x{\Psi}\rlowertwocell_z{\Phi} \ar[r]|y &B}.
\end{equation}
As a consequence: 
$(\Phi\circ\Psi)^\dag=((\Phi\circ\Psi)^*)_\bullet=
(\Psi^*\circ\Phi^*)_\bullet=(\Phi^*)_\bullet\circ(\Psi^*)_\bullet=\Phi^\dag\circ\Psi^\dag$ and, in a perfectly similar way, $(\Phi\circ\Psi)^\ddag=\Phi^\ddag\circ\Psi^\ddag$. Hence $\dag$ and $\ddag$ are both $\circ$-covariant maps.

Property~\ref{eq: circ-contra} is proved by usage of the exchange property\footnote{Notice that the full exchange property is not actually necessary: it is enough to require the validity of the exchange property whenever at least a pair of the 2-arrows involved belong to $\Cs^1$; such property clearly implies the non-commutative exchange, but it is a strictly stronger requirement.} 
and the conjugate equations:\footnote{With some abuse of notation, in all the subsequent calculations, we simply write $x:=\iota^2(x)$, for all $x\in \Cs^1$ and similarly $A:=\iota^2(A)$ for all $A\in \Cs^0$.}  
\begin{align*}
\Psi_\bullet 
\circ &\Phi_\bullet =
(\cj{x}\otimes \cj{R}^*_y)\circ(\cj{x}\otimes \Psi\otimes \cj{y})\circ(R_x\otimes \cj{y})
\circ
(\cj{y}\otimes\cj{R}^*_z)\circ (\cj{y}\otimes\Phi\otimes \cj{z})\circ(R_y\otimes \cj{z}) 
\\
&=
(\cj{x}\otimes \cj{R}^*_y)
\circ 
\Big([(\cj{x}\otimes \Psi)\circ R_x]\otimes \cj{y}\Big)
\circ
\Big(\cj{y}\otimes[\cj{R}^*_z\circ (\Phi\otimes \cj{z})]\Big)  
\circ 
(R_y\otimes \cj{z}) 
\\
&=
(\cj{x}\otimes \cj{R}^*_y)\circ \Big( [(\cj{x}\otimes \Psi)\circ R_x]\otimes \cj{y} \Big)
\circ
\Bigg(B \otimes \Big[\Big(\cj{y}\otimes [\cj{R}^*_z\circ (\Phi\otimes \cj{z})]\Big) \circ(R_y\otimes \cj{z}) \Big]\Bigg)
\\ 
&=
(\cj{x}\otimes \cj{R}^*_y)\circ \Bigg(
\Big([(\cj{x}\otimes \Psi)\circ R_x]\circ B\Big) \otimes \Big[\cj{y}\circ \Big( \cj{y}\otimes[\cj{R}^*_z\circ (\Phi\otimes \cj{z})]\Big) 
\circ (R_y\otimes \cj{z})
\Big] \Bigg) 
\\ 
&= 
(\cj{x}\otimes \cj{R}^*_y)\circ \Bigg(
[(\cj{x}\otimes \Psi)\circ R_x] \otimes \Big[\Big( \cj{y}\otimes[\cj{R}^*_z\circ (\Phi\otimes \cj{z})]\Big) \circ (R_y\otimes \cj{z})
\Big] \Bigg) 
\\
&=
(\cj{x}\otimes \cj{R}^*_y)\circ 
\Bigg( 
\Big( (\cj{x}\otimes y)\circ[(\cj{x}\otimes \Psi)\circ R_x]\Big) \otimes\Big( \Big[
( \cj{y}\otimes[\cj{R}^*_z\circ (\Phi\otimes \cj{z})]) \circ (R_y\otimes \cj{z})\Big]
\circ 
\cj{z}
\Big) \Bigg)  
\end{align*}

\begin{align*}
\phantom{\Psi_\bullet 
\circ} 
&= 
(\cj{x}\otimes \cj{R}^*_y)\circ \Big(
(\cj{x}\otimes y)\otimes
\Big[
(\cj{y}\otimes[\cj{R}^*_z\circ (\Phi\otimes \cj{z})]) \circ (R_y\otimes \cj{z})\Big]
\Big) \circ
( 
[(\cj{x}\otimes \Psi)\circ R_x]
\otimes 
\cj{z} 
) 
\\ 
&=
(\cj{x}\otimes \cj{R}^*_y\otimes A)\circ 
\Big(\cj{x}\otimes y\otimes \cj{y}\otimes[\cj{R}^*_z\circ (\Phi\otimes \cj{z})]\Big) \circ (\cj{x}\otimes y\otimes R_y\otimes \cj{z})
\circ
( 
[(\cj{x}\otimes \Psi)\circ R_x]
\otimes \cj{z} 
) 
\\ 
&=
\Bigg(
\Big((\cj{x}\otimes \cj{R}^*_y) \circ (\cj{x}\otimes y\otimes \cj{y})\Big) \otimes 
\Big(A \circ [\cj{R}^*_z\circ (\Phi\otimes \cj{z})]\Big) 
\Bigg)  
\circ
(\cj{x}\otimes y \otimes R_y\otimes \cj{z}) 
\circ
([(\cj{x}\otimes \Psi)\circ R_x]\otimes \cj{z})
\\ 
&=
\left(
(\cj{x}\otimes \cj{R}^*_y)\otimes 
[\cj{R}^*_z\circ (\Phi\otimes \cj{z})]
\right) 
\circ
(\cj{x}\otimes y \otimes R_y\otimes \cj{z}) 
\circ
([(\cj{x}\otimes \Psi)\circ R_x]\otimes \cj{z})
\\ 
&=
\Bigg( 
\Big(\cj{x}\circ (\cj{x}\otimes \cj{R}^*_y)\Big) \otimes 
\Big([\cj{R}^*_z\circ (\Phi\otimes \cj{z})]
\circ (y \otimes \cj{z})\Big)
\Bigg)  
\circ
(\cj{x}\otimes y \otimes R_y\otimes \cj{z}) 
\circ
([(\cj{x}\otimes \Psi)\circ R_x]\otimes \cj{z})
\\ 
&=
(\cj{x}\otimes [\cj{R}^*_z\circ (\Phi\otimes \cj{z})])
\circ
(\cj{x}\otimes \cj{R}^*_y\otimes y \otimes \cj{z})
\circ
(\cj{x}\otimes y \otimes R_y\otimes \cj{z}) 
\circ
([(\cj{x}\otimes \Psi)\circ R_x]\otimes \cj{z})
\\
\phantom{\Psi_\bullet \circ} 
&=
(\cj{x}\otimes [\cj{R}^*_z\circ (\Phi\otimes \cj{z})])
\circ
\Big(
\cj{x}\otimes
[(\cj{R}^*_y \otimes y) 
\circ
(y\otimes R_y)]  
\otimes \cj{z}
\Big)
\circ
([(\cj{x}\otimes \Psi)\circ R_x]\otimes \cj{z})
\\
&=
(\cj{x}\otimes [\cj{R}^*_z\circ (\Phi\otimes \cj{z})])
\circ
(\cj{x}\otimes y \otimes \cj{z})
\circ
([(\cj{x}\otimes \Psi)\circ R_x]\otimes \cj{z}) 
\\
&=
(\cj{x}\otimes \cj{R}^*_z)\circ (\cj{x}\otimes \Phi\otimes \cj{z})
\circ
(\cj{x}\otimes y \otimes \cj{z})
\circ
(\cj{x}\otimes \Psi\otimes \cj{z})\circ (R_x\otimes \cj{z}) 
\\
&=
(\cj{x}\otimes \cj{R}^*_z)\circ \Big(\cj{x}\otimes (\Phi \circ \Psi) \otimes \cj{z}\Big)\circ(R_x\otimes \cj{z})
=(\Phi \circ \Psi)_\bullet . 
\end{align*}

If $\Phi=\iota^2(A)\in \Cs^0$ is an object, since $\Phi^*=\Phi$, we get $\Phi^\dag=(\Phi^*)_\bullet=\Phi_\bullet$.  
If we assume now that the conjugation map $x\mapsto(R_x,\cj{R}_x)$ satisfies the additional \emph{unitality condition}  
\begin{equation}\label{eq: unitality} 
R_x=\iota^2(x)=\cj{R}_x, \quad \forall x\in \Cs^0, 
\end{equation} 
(such a choice is not restrictive and can always be done), in this case we necessarily have $\cj{x}=x$, furthermore 
$\Phi_\bullet=
(\iota^1(A)\otimes\iota^2(A)^*)\circ(\iota^1(A)\otimes\Phi\otimes\iota^1(A))\circ(\iota^2(A)\otimes\iota^1(A))=\Phi$ and hence $\Phi^\dag=\Phi$. Similarly, under condition~\eqref{eq: unitality}, $\Phi^\ddag=\Phi$, for $\Phi\in \Cs^0$, so that $\dag$ and $\ddag$ are covariant $(\Cs,\circ)$ endofunctors. 

\medskip 

The two foldings interchange under the $*$-involution i.e.~$(\Phi_\bullet)^*={}_\bullet(\Phi^*)$ and similarly $({}_\bullet\Phi)^*=(\Phi^*)_\bullet$: 
\begin{align*}
(\Phi_\bullet)^* &= 
[(\cj{x}\otimes \cj{R}^*_y)
\circ 
(\cj{x}\otimes \Phi\otimes \cj{y})
\circ 
(R_x \otimes \cj{y})]^*
=
(R_x \otimes \cj{y})^* 
\circ 
(\cj{x}\otimes \Phi\otimes \cj{y})^*
\circ  
(\cj{x}\otimes \cj{R}^*_y)^*
\\ 
&=
(R_x^* \otimes \cj{y}) 
\circ 
(\cj{x}\otimes \Phi^*\otimes \cj{y})
\circ  
(\cj{x}\otimes \cj{R}_y)={}_\bullet(\Phi^*). 
\end{align*} 
As a consequence, we immediately obtain that the maps $\dag$, $\ddag$ interchange under the involution $*$: 
\begin{gather*}
(\Phi^\dag)^*=((\Phi^*)_\bullet)^*={}_\bullet(\Phi^{**})=(\Phi^*)^\ddag, 
\quad \quad 
(\Phi^\ddag)^*=({}_\bullet(\Phi^*))^*=(\Phi^{**})_\bullet=(\Phi^*)^\dag. 
\end{gather*}

\medskip 

Without further requirements for conjugations, the maps $\dag$ and $\ddag$ are not usually involutive. 
For this purpose, let us assume that the conjugation map $x\mapsto (R_x,\cj{R}_x)$ satisfies the 
\emph{involutivity condition}:\footnote{Notice that the unitality~\eqref{eq: unitality} does not conflict with the involutivity~\eqref{eq: involutivity}, so that both conditions can be required together.} 
\begin{equation}\label{eq: involutivity} 
(R_x,\cj{R}_x)=(\cj{R}_{\cj{x}},R_{\cj{x}}), \quad \forall x\in \Cs^1. 
\end{equation}
Whenever a conjugation map satisfies~\eqref{eq: involutivity}, we have an induced involution $x\mapsto \cj{x}$ on 1-arrows $x\in \Cs^1$ and, when unitality~\eqref{eq: unitality} also holds, such involution is Hermitian on objects: $\cj{A}=A$, for $A\in \Cs^0$. 

\bigskip 

If the involutivity condition~\eqref{eq: involutivity} holds, the two previous folding maps are mutually inverses. 

Here below we show ${}_\bullet(\Phi_\bullet)=\Phi$, the proof of $({}_\bullet\Phi)_\bullet$ is similarly obtained in a ``specular way'':\footnote{Notice that, we have used the exchange properties involving at least a pair of arrows in $\Cs^1$, with the exception of the three passages leading to equations~\eqref{eq: 1}~\eqref{eq: 2}~\eqref{eq: 3}, where at least one object and at least one adjunction unit/counit was involved.} 

\begin{align}\notag 
{}_\bullet(\Phi_\bullet) &= 
(R^*_{\cj{x}}\otimes y)
\circ 
(x\otimes 
\Big[
(\cj{x}\otimes \cj{R}^*_y)
\circ 
(\cj{x}\otimes \Phi\otimes \cj{y})
\circ 
(R_x \otimes \cj{y}) 
\Big]
\otimes y)
\circ 
(x\otimes \cj{R}_{\cj{y}})
\\ \notag 
&= 
(\cj{R}^*_{x}\otimes y)
\circ 
(x\otimes 
\Big[
(\cj{x}\otimes \cj{R}^*_y)
\circ 
(\cj{x}\otimes \Phi\otimes \cj{y})
\circ 
(R_x \otimes \cj{y}) 
\Big]
\otimes y)
\circ 
(x\otimes R_{y})
\\ \notag 
&=
(\cj{R}^*_{x}\otimes y)
\circ 
(x\otimes \cj{x}\otimes \cj{R}^*_y\otimes y)
\circ 
(x\otimes \cj{x}\otimes \Phi\otimes \cj{y}\otimes y)
\circ 
(x\otimes R_x \otimes \cj{y}\otimes y) 
\circ 
(x\otimes R_{y})
\\ 
\notag 
&=
(\cj{R}^*_{x}\otimes y)
\circ 
(x\otimes \cj{x}\otimes \cj{R}^*_y\otimes y)
\circ 
\Bigg(
x\otimes 
\Big[ \Big(
[(\cj{x}\otimes \Phi)\circ R_x ]
\otimes (\cj{y}\otimes y)
\Big) 
\circ 
(B\otimes R_{y})
\Big] \Bigg)
\\ \notag 
&=
(\cj{R}^*_{x}\otimes y)
\circ 
(x\otimes \cj{x}\otimes \cj{R}^*_y\otimes y)
\circ 
\Bigg(
x\otimes 
\Big(
[(\cj{x}\otimes \Phi)\circ R_x ]
\circ B
\Big) 
\otimes 
((\cj{y}\otimes y)\circ R_{y})
\Bigg)
\\ \notag 
&=
(\cj{R}^*_{x}\otimes y)
\circ 
(x\otimes \cj{x}\otimes \cj{R}^*_y\otimes y)
\circ 
\Big(
x\otimes 
[(\cj{x}\otimes \Phi)\circ R_x ]
\otimes 
(R_{y}\circ B)
\Big) 
\\ \label{eq: 1}
&=
(\cj{R}^*_{x}\otimes y)
\circ 
(x\otimes \cj{x}\otimes \cj{R}^*_y\otimes y)
\circ 
\Big[
x\otimes 
\Big(
[(\cj{x}\otimes \Phi)\otimes R_y ]
\circ 
(R_{x}\otimes B)
\Big) 
\Big]  
\\ \notag 
&=
(\cj{R}^*_{x}\otimes y)
\circ 
\Big(
(x\otimes \cj{x})\otimes 
[(\cj{R}^*_y\otimes y)\circ (\Phi \otimes R_y)] 
\Big) 
\circ 
(x\otimes R_{x})
\\ \notag 
&=
\Big[
(\cj{R}^*_{x}\circ (x\otimes \cj{x}) )
\otimes 
\Big(
y \circ  
[(\cj{R}^*_y\otimes y)\circ (\Phi \otimes R_y)] 
\Big) \Big] 
\circ 
(x\otimes R_{x})
\\ 
\notag 
\phantom{{}_\bullet(\Phi_\bullet) } 
&=
\Big[
(A\circ \cj{R}^*_{x})  
\otimes 
\Big(
(\cj{R}^*_y\otimes y)\circ (\Phi \otimes R_y)  
\Big) 
\Big] 
\circ 
(x\otimes R_{x})
\\ \label{eq: 2}
&=
[ 
A \otimes 
(\cj{R}^*_y\otimes y)
]
\circ 
(\cj{R}^*_{x}\otimes \Phi \otimes R_y)  
\circ 
(x\otimes R_{x})
\\ \notag 
&= 
(\cj{R}^*_y\otimes y) 
\circ 
(\cj{R}^*_x\otimes \Phi\otimes R_y)
\circ 
(x\otimes R_x) 
\\ \notag 
&=  
(\cj{R}^*_y\otimes y) 
\circ 
\Big(\cj{R}^*_x\otimes [(y\circ\Phi) \otimes (R_y\circ B)]\Big)
\circ 
(x\otimes R_x) 
\\ \notag 
&=  
(\cj{R}^*_y\otimes y) 
\circ 
\Big(\cj{R}^*_x\otimes [(y\otimes R_y) \circ (\Phi\otimes B)]\Big)
\circ 
(x\otimes R_x) 
\\ \notag
&=  
(\cj{R}^*_y\otimes y) 
\circ 
\Big((A\circ \cj{R}^*_x)\otimes [(y\otimes R_y) \circ (\Phi\otimes B)]\Big)
\circ 
(x\otimes R_x) 
\\ \label{eq: 3}
&=  
(\cj{R}^*_y\otimes y) 
\circ 
(A\otimes (y\otimes R_y) )\circ [\cj{R}^*_x\otimes (\Phi\otimes B)] 
\circ 
(x\otimes R_x) 
\\ \notag 
&=  
(\cj{R}^*_y\otimes y) \circ (y\otimes R_y) \circ 
[(A\circ \cj{R}^*_x)\otimes ((\Phi\otimes B)\circ x)] 
\circ (x\otimes R_x) 
\\ \notag 
&= 
(\cj{R}^*_y\otimes y) 
\circ 
(y\otimes R_y)
\circ 
(A\otimes \Phi\otimes B)
\circ 
(\cj{R}^*_x\otimes x)
\circ 
(x\otimes R_x) 
\\ \notag 
&=
y\circ \Phi\circ x
=\Phi. 
\end{align}

The maps $\dag$, $\ddag$ are not necessarily involutive since, in general, ${}_\bullet(\Phi_\bullet)\neq\Phi$ and $({}_\bullet\Phi)_\bullet\neq\Phi$; but, when the conjugation map $x\mapsto(R_x,\cj{R}_x)$ satisfies the involutivity condition~\eqref{eq: involutivity}, $\dag$ and $\ddag$ are indeed  involutions: 
\begin{align*}
&(\Phi^\dag)^\dag=((\Phi^*)_\bullet)^\dag=(((\Phi^*)_\bullet)^*)_\bullet= ({}_\bullet(\Phi^{**}))_\bullet=({}_\bullet\Phi)_\bullet=\Phi, 
\\ 
&(\Phi^\ddag)^\ddag=({}_\bullet(\Phi^*))^\dag={}_\bullet(({}_\bullet(\Phi^*))^*)= {}_\bullet((\Phi^{**})_\bullet)={}_\bullet(\Phi_\bullet)=\Phi. 
\end{align*}

\medskip 

In general the maps $\dag$, $\ddag$ do not necessarily coincide: 
\begin{equation}\label{eq: tracial} 
\Phi^\dag=\Phi^\ddag \quad \iff \quad (\Phi^*)^\dag=(\Phi^\dag)^* \quad \iff \quad (\Phi^*)^\ddag=(\Phi^\ddag)^* \quad 
\iff \quad \Phi_\bullet={}_\bullet\Phi, \quad \forall \Phi\in \Cs. 
\end{equation}
This follows immediately from $(\Phi^*)^\dag=((\Phi^*)^*)_\bullet=\Phi_\bullet$ and from 
$(\Phi^\dag)^*=((\Phi^*)_\bullet)^*={}_\bullet((\Phi^*)^*)={}_\bullet\Phi$, (even in absence of unitality and involutivity conditions for the conjugation map). When the involutivity property~\eqref{eq: involutivity} is assumed, conditions~\eqref{eq: tracial} are further equivalent to the involutivity of the folding maps: $(\Phi_\bullet)_\bullet=\Phi$ and ${}_\bullet({}_\bullet\Phi)=\Phi$.  

The validity of any of the equivalent properties~\ref{eq: tracial} is (in the monoidal category case) implied by the ``traciability condition'' described by R.Longo-J.E.Roberts~\cite[lemma~2.3 c]{LR}, but is in general false. 

\medskip 

In general the equations $(\Phi\otimes\Psi)^\dag=\Psi^\dag\otimes\Phi^\dag$ and  $(\Phi\otimes\Psi)^\ddag=\Psi^\ddag\otimes\Phi^\ddag$ do not hold.
 
In those specific cases where it is possible to globally select a conjugation map $x\mapsto (R_x,\cj{R}_x)$ that satisfies the \emph{tensorial conditions}~\cite[proof of theorem~2.4]{LR}\footnote{In~\cite{BCM}, a strict 2-category with such property (and unitality) is said to be equipped with an ``internal adjunction structure''.} 
\begin{equation}\label{eq: tensorial} 
R_{x\otimes y}=(\iota^2(\cj{y})\otimes R_x\otimes \iota^2(y))\circ R_y, \quad 
\cj{R}_{x\otimes y}=(\iota^2(x)\otimes \cj{R}_y\otimes\iota^2(\cj{x}))\circ \cj{R}_x,  
\end{equation}
we observe (see~\cite[theorem~2.4]{LR} for the monoidal case) that conjugation becomes a $\otimes$-congruence relation on 1-arrows,  i.e.~whenever $(x,\cj{x})$ and $(y,\cj{y})$ are conjugate pairs, via $(R_x,\cj{R}_x)$ and $(R_y,\cj{R}_y)$, if $x\otimes y$ exists (so also 
$\cj{y}\otimes\cj{x}$ exists), $\cj{x\otimes y}=\cj{y}\otimes\cj{x}$, because $(x\otimes y,\cj{y}\otimes\cj{x})$ is a conjugate pair via 
$(R_{x\otimes y},\cj{R}_{x\otimes y})$: 
\begin{align*}
&
(R^*_{x\otimes y}\otimes x\otimes y)\circ (x\otimes y \otimes R_{x\otimes y}) 
=x\otimes y, 
\\ 
&
(R^*_{x\otimes y}\otimes\cj{y}\otimes \cj{x}) \circ (\cj{y}\otimes \cj{x}\otimes \cj{R}_{x\otimes y})
= 
\Big([R_y^*\circ (\cj{y}\otimes R_x^*\otimes y)]\otimes \cj{y}\otimes\cj{x}\Big) 
\circ 
\Big(
\cj{y}\otimes\cj{x}\otimes [(x\otimes \cj{R}_y\otimes \cj{x})\circ \cj{R}_x]
\Big)
\\
&=
(R^*_y\otimes \cj{y}\otimes \cj{x})\circ (\cj{y}\otimes B\otimes y\otimes \cj{y}\otimes\cj{x})
\circ (\cj{y}\otimes R^*_x\otimes \cj{R}_y\otimes \cj{x})\circ (\cj{y}\otimes \cj{x}\otimes x\otimes B\otimes \cj{x})
\circ (\cj{y}\otimes \cj{x}\otimes \cj{R}_x )
\\ 
&=
(R^*_y\otimes \cj{y}\otimes \cj{x})\circ  
(\cj{y}\otimes B\otimes\cj{R}_y\otimes \cj{x})
\circ 
(\cj{y}\otimes R^*_x\otimes  B\otimes \cj{x})
\circ (\cj{y}\otimes \cj{x}\otimes \cj{R}_x )
= (\cj{y}\otimes \cj{x})\circ  (\cj{y}\otimes \cj{x})= \cj{y}\otimes \cj{x} .
\end{align*}

Under the same tensorial conditions~\eqref{eq: tensorial}, we obtain also the $\otimes$-contravariance of the folding maps 
\begin{equation*}
(\Phi\otimes\Psi)_\bullet = \Psi_\bullet\otimes\Phi_\bullet, \quad 
{}_\bullet(\Phi\otimes\Psi) = {}_\bullet\Psi \otimes {}_\bullet\Phi, \quad 
\text{for} \quad \xymatrix{
A \rtwocell^{x_1}_{y_1}{\Phi} & B \rtwocell^{x_2}_{y_2}{\Psi} & C, 
}
\end{equation*}
via the following computation, using again the exchange property\footnote{Again, also here we only use the exchange property whenever at least two of the four 2-arrows involved are in $\Cs^1$.} and the conjugate equations:
\begin{align*}
\Psi_\bullet \otimes &\Phi_\bullet 
=(\cj{x}_2 \circ \Psi_\bullet)\otimes (\Phi_\bullet \circ \cj{y}_1) 
=(\cj{x}_2 \otimes \Phi_\bullet)\circ (\Psi_\bullet \otimes \cj{y}_1) 
\\
&
\begin{aligned}
=\Big(\cj{x}_2 \otimes [(\cj{x}_1\otimes \cj{R}^*_{y_1}) 
&
\circ(\cj{x}_1\otimes \Phi \otimes \cj{y}_1)\circ(R_{x_1}\otimes \cj{y}_1)] \Big)
\\ 
& \circ  
\Big([(\cj{x}_2\otimes \cj{R}^*_{y_2})\circ(\cj{x}_2\otimes \Psi \otimes \cj{y}_2)\circ(R_{x_2}\otimes \cj{y}_2)] \otimes \cj{y}_1\Big)  
\end{aligned}
\\
&
\begin{aligned}
=(\cj{x}_2\otimes\cj{x}_1\otimes\cj{R}^*_{y_1})\circ 
\Big([\cj{x}_2\otimes((\cj{x}_1\otimes\Phi)\circ R_{x_1})] \otimes \cj{y}_1\Big) 
& \circ 
\Big(\cj{x}_2 \otimes [(\cj{R}^*_{y_2}\circ(\Psi\otimes \cj{y}_2)\otimes\cj{y}_1] \Big)
\\ 
& \circ (R_{x_2}\otimes \cj{y}_2\otimes\cj{y}_1) 
\end{aligned}
\\ 
&
\begin{aligned}
= (\cj{x}_2\otimes\cj{x}_1\otimes\cj{R}^*_{y_1}) & \circ 
\Big[\Big([\cj{x}_2\otimes((\cj{x}_1\otimes\Phi)\circ R_{x_1})]\circ\cj{x}_2\Big)
\otimes 
\Big(\cj{y}_1\circ[(\cj{R}^*_{y_2}\circ(\Psi\otimes \cj{y}_2)\otimes\cj{y}_1]\Big)\Big] 
\\
& \circ (R_{x_2}\otimes \cj{y}_2\otimes\cj{y}_1)  
\end{aligned} 
\\
&
\begin{aligned}
= (\cj{x}_2\otimes\cj{x}_1\otimes\cj{R}^*_{y_1}) & \circ 
\Big[[\cj{x}_2\otimes((\cj{x}_1\otimes\Phi)\circ R_{x_1})]
\otimes 
[(\cj{R}^*_{y_2}\circ(\Psi\otimes \cj{y}_2)\otimes\cj{y}_1]\Big] 
\\ 
& \circ (R_{x_2}\otimes \cj{y}_2\otimes\cj{y}_1)  
\end{aligned}
\\
&
\begin{aligned}
= (\cj{x}_2\otimes\cj{x}_1\otimes\cj{R}^*_{y_1}) \circ 
\Big[
& 
\Big([\cj{x}_2\otimes\cj{x}_1\otimes y_1] \circ [\cj{x}_2 \otimes((\cj{x}_1\otimes\Phi)\circ R_{x_1})]\Big)
\\ 
& 
\otimes  
\Big([\cj{R}^*_{y_2}\circ(\Psi\otimes \cj{y}_2)\otimes\cj{y}_1] \circ [x_2\otimes \cj{y}_2\otimes\cj{y}_1]\Big)
\Big]
\circ (R_{x_2}\otimes \cj{y}_2\otimes\cj{y}_1) 
\end{aligned}
\\ 
& \begin{aligned}
= (\cj{x}_2\otimes\cj{x}_1\otimes\cj{R}^*_{y_1}) \circ 
\Big[&\Big([\cj{x}_2\otimes\cj{x}_1\otimes y_1] \otimes [\cj{R}^*_{y_2}\circ(\Psi\otimes \cj{y}_2)\otimes\cj{y}_1] \Big)
\\ 
& \circ \Big([\cj{x}_2\otimes((\cj{x}_1\otimes\Phi)\circ R_{x_1})]
\otimes [x_2\otimes \cj{y}_2\otimes\cj{y}_1]\Big)\Big] 
\circ (R_{x_2}\otimes \cj{y}_2\otimes\cj{y}_1) 
\end{aligned}
\\ 
& \begin{aligned}
=(\cj{x}_2\otimes \cj{x}_1 & \otimes \cj{R}^*_{y_1}) 
\circ 
(\cj{x}_2\otimes \cj{x}_1\otimes y_1 \otimes \cj{R}^*_{y_2} \otimes \cj{y}_1) 
\circ 
(\cj{x}_2\otimes\cj{x}_1\otimes y_1\otimes \Psi\otimes \cj{y}_2 \otimes \cj{y}_1)
\\ 
& \circ 
(\cj{x}_2\otimes\cj{x}_1\otimes\Phi \otimes x_2\otimes \cj{y}_2\otimes\cj{y}_1)
\circ 
(\cj{x}_2 \otimes R_{x_1}\otimes x_2\otimes \cj{y}_2\otimes \cj{y}_1)
\circ 
(R_{x_2}\otimes \cj{y}_2\otimes \cj{y}_1) 
\end{aligned}
\\ 
& \begin{aligned}
=(\cj{x}_2\otimes \cj{x}_1\otimes \cj{R}^*_{y_1}) 
\circ 
(\cj{x}_2\otimes \cj{x}_1\otimes y_1 \otimes \cj{R}^*_{y_2} \otimes \cj{y}_1) 
& \circ 
\Big(\cj{x}_2\otimes\cj{x}_1\otimes 
(
[y_1\otimes \Psi]
\circ 
[\Phi \otimes x_2]
)
\otimes \cj{y}_2\otimes\cj{y}_1\Big) 
\\ 
&
\circ 
(\cj{x}_2 \otimes R_{x_1}\otimes x_2\otimes \cj{y}_2\otimes \cj{y}_1) \circ (R_{x_2}\otimes \cj{y}_2\otimes \cj{y}_1)
\end{aligned}
\end{align*}
\begin{align*} 
& \begin{aligned}
=(\cj{x}_2\otimes \cj{x}_1\otimes \cj{R}^*_{y_1}) 
\circ 
(\cj{x}_2\otimes \cj{x}_1\otimes y_1 \otimes \cj{R}^*_{y_2} \otimes \cj{y}_1) 
& \circ (\cj{x}_2\otimes \cj{x}_1\otimes \Phi\otimes\Psi\otimes \cj{y}_2\otimes \cj{y}_1) 
\\ 
& \circ 
(\cj{x}_2 \otimes R_{x_1}\otimes x_2\otimes \cj{y}_2\otimes \cj{y}_1) \circ (R_{x_2}\otimes \cj{y}_2\otimes \cj{y}_1)
\end{aligned}
\\
& \begin{aligned}
=\Big(\cj{x}_2\otimes \cj{x}_1\otimes [\cj{R}^*_{y_1} \circ (y_1 \otimes \cj{R}^*_{y_2} \otimes \cj{y}_1)]\Big) 
& \circ (\cj{x}_2\otimes \cj{x}_1\otimes (\Phi\otimes\Psi)\otimes \cj{y}_2\otimes \cj{y}_1) 
\\ 
& \circ 
\Big([(\cj{x}_2 \otimes R_{x_1}\otimes x_2)\circ R_{x_2}] \otimes \cj{y}_2\otimes \cj{y}_1\Big) 
\end{aligned}
\\
&= (\cj{x_1\otimes x_2}\otimes \cj{R}^*_{y_1\otimes y_2})\circ (\cj{x_1\otimes x_2}\otimes (\Phi\otimes\Psi)\otimes \cj{y_1\otimes y_2})\circ 
(R_{x_1\otimes x_2}\otimes \cj{y_1\otimes y_2}) 
= (\Phi\otimes\Psi)_\bullet 
\end{align*}
The $\otimes$-contravariance for the second folding map is perfectly specular. 

As an immediate corollary we obtain:
$(\Phi\otimes\Psi)^\dag=\big((\Phi\otimes\Psi)^*\big)_\bullet=(\Phi^*\otimes\Psi^*)_\bullet=(\Psi^*)_\bullet\otimes(\Phi^*)_\bullet
=\Psi^\dag\otimes\Phi^\dag$ 
and, in the same way, also $(\Phi\otimes\Psi)^\ddag=\Psi^\ddag\otimes\Phi^\ddag$. 

\medskip 

As a consequence of the previous discussion, we see that a strict 2-category $(\Cs,\otimes, \circ,*)$ (with usual exchange), equipped with an involution over 1-arrows and a unital involutive tensorial conjugation map $x\mapsto (R_x,\cj{R}_x)$ that satisfies the traciability condition, is canonically endowed with an involution over objects $\dag$ and with such an involution $(\Cs,\otimes,\circ,*,\dag)$ is an example of fully involutive strict 2-category. 

\section{Strict Higher C*-categories} \label{sec: hC*} 

In this section we introduce home-set-wise additive and Banach structures on (fully involutive) strict globular $n$-categories and we thereby arrive at the main definitions of higher C*-categories (with and without non-commutative exchange), producing some basic examples. 

\subsection{Strict Higher Algebroids and Categorical Bundles} 

We now proceed to describe a vertical categorification of the $*$-algebroids that, for $n=1$, has been defined by 
P.Ghez-R.Lima-J.E.Roberts~\cite{GLR} and P.Mitchener~\cite{M}. 

\begin{definition}
An \emph{$n$-algebroid at level-$p$}, for $0\leq p<n$, is a strict $n$-category $(\Cs,\circ_0,\dots,\circ_{n-1})$ further equipped with a partial linear structure $(\Cs,+,\cdot)$ such that: 
\begin{itemize}
\item 
the $p$-home-sets $\Cs_{xy}:=\{w\in \Cs \ | \ \exists \ w\circ_p y, x\circ_p w\}$, $\forall x,y\in \Cs^p$, are disjoint union of linear spaces,\footnote{We choose here to introduce a (partial) linear structure only on the set of $n$-arrows (and similarly we will later introduce a norm only on $n$-arrows); in essentially all of the examples treated here, the spaces $\Cs_{xy}$, for $x,y\in\Cs^{n-1}$, will simply be vector spaces, but also in such case, whenever $x,y\in\Cs^{k}$, with $0\leq k<n-1$, the spaces $\Cs_{xy}$ are not vector spaces: they are union of the vector spaces $\Cs_{ab}$, for all $a,b\in\Cs^{n-1}$ with $p$-source $y$ and $p$-target $x$; the appearance of this ``superselection'' structure on lower level home-sets is inevitable.}  
\item 
the composition $\circ_p$ is bilinear when restricted to composable linear spaces.
\end{itemize}
Whenever an involution $*_\alpha$, for $\alpha\subset\{0,\dots,n\}$, is present on a given $n$-algebroid $(\Cs,\circ_0,\dots,\circ_{n-1},+,\cdot)$, we can require the involution to be linear or conjugate-linear when restricted to the linear spaces. In this case we say that we have a \emph{$*_\alpha$-$n$-algebroid}. In the case of fully involutive $n$-categories, we will simply use the term $*$-$n$-algebroid. 
\end{definition}

\begin{remark}
In  general the home-sets $\Cs_{xy}$, for $x,y\in \Cs^p$ might even be abelian groupoids $(\Cs_{xy},+)$. 

Here, for simplicity, we will usually assume that the home-sets $(\Cs_{xy},+,\cdot)$ are complex vector spaces. 

Furthermore, we will later restrict mostly to the case of depth-$(n-1)$, i.e.~$p=n-1$, so that no other independent linear structures are imposed for $0\leq p<n-1$. 
\xqed{\lrcorner}
\end{remark}

Every 1-category $(\Cs,\circ)$ can be seen as a bundle over the discrete pair groupoid $\Xs:=\Cs^0\times\Cs^0$, with projection functor $\pi:\Cs\to\Xs$ given by $\pi(x):=(t(x),s(x))$ and with fibers $\Cs_{AB}$, for all $(A,B)\in \Xs$. This process of ``bundlification'', trading categories for bundles and further generalizing to cases when the base is not simply a discrete pair groupoid, is quite useful and admits a vertical categorification.

\medskip 

We introduce here, in a slightly more general form than needed, those notions of bundles that are compatible with functorial projections and with the involutions and fiberwise linear structures present in algebroids. We will eventually make use of such notions for the specific case of vertical categorification of Fell bundles that will be treated in the subsequent section~\ref{sec: hfb}. 

\begin{definition}
A \emph{categorical $n$-bundle} $(\Es,\pi,\Xs)$ is a continuous open surjective $n$-functor $\pi:\Es\to\Xs$ between topological $n$-categories such that $\Es$ is equipped with a ``fiberwise uniform structure'' i.e.~a family of sets $U\subset \Es\times \Es$, with $U_e:=U\cap \Es_{\pi(e)}\subset\Es_{\pi(e)}$, for all $e\in\Es$, such that the sets $\bigcup_{x\in\O}U_{\sigma(x)}$, with $\O$ an open set in $\Xs$ and $\sigma:\Xs\to\Es$ a continuous section of 
$\Es$, form a base of neighborhoods of the topology of $\Es$.\footnote{In most cases of practical interest $\Xs$ will be locally compact.
\\ 
Completeness of the fibers can be imposed via the induced uniformity.}
\end{definition}

In a perfectly similar way we have bundlifications of the definitions of strict globular (involutive) \hbox{$n$-algebroid} at level-$p$. 

\begin{definition}
A \emph{level-$p$ algebroidal $n$-bundle} is a categorical $n$-bundle $\pi:\Es\to\Xs$ such that $(\Es,\circ_0,\dots,\circ_{n-1},+)$ is an $n$-algebroid at level-$p$ and such that $\circ_p$ is bilinear when restricted to composable home-sets.\footnote{Such bilinearity is equivalent to the $\circ_p$-bifunctoriality with respect to the additive structures.} 
Algebroidal bundles, with the fiberwise uniformities induced via a choice of norms on the fibers, are said to be \emph{normed} (and Banach when fiberwise complete). 
\end{definition}

Similarly, whenever $\Xs$ and $\Es$ are (partially/totally) involutive topological categories, we define \emph{involutive \hbox{$n$-algebroidal} bundles} as $n$-algebroidal bundles such that $\pi:\Es\to\Xs$ is a \hbox{$*$-functor} (for all the relevant involutions) and requiring that the involutions are linear or conjugate-linear when restricted to the linear spaces in every fiber. 

\medskip 

The description of a 1-category $\Cs$ as a bundle over the pair groupoid $\Cs^0\times\Cs^0$ admits a vertical categorification.

\begin{remark}\label{rem: cat-b}
Every $n$-category $(\Cs,\circ_0,\dots,\circ_{n-1})$ (with usual exchange or non-commutative exchange) can be equivalently described as an $n$-categorical bundle $\Es\xrightarrow{(t^{n-1},s^{n-1})}\Xs$ over the (discrete) $n$-category 
$\Xs:=(\Cs^{n-1}\times\Cs^{n-1},\cj{\circ}_0,\dots,\cj{\circ}_{n-1},\cj{\circ}_n)$ with operations defined as pairwise compositions, if $0\leq p<n$, and as ``concatenations'', if $p=n$:\footnote{The bundle defined here can have empty fibers (whenever the pair $(x,y)\in \Cs^{n-1}\times\Cs^{n-1}$ is not in globular position and there is no problem in restricting the base of the bundle to such ``globular'' pairs of $(n-1)$-arrows in $\Cs$.)} 
\begin{gather*}
(x_1,y_1)\cj{\circ}_p(x_2,y_2):=(x_1\circ_p x_2,y_1\circ_p y_2), \quad 0\leq p<n, \quad (x_1,x_2),(y_1,y_2)\in \Cs^{n-1}\times_{\Cs^p}\Cs^{n-1}
\\
(x,y)\cj{\circ}_n(y,z):=(x,z), \quad x,y,z\in \Cs^{n-1}. 
\xqedhere{8.3cm}{\lrcorner}
\end{gather*}
\end{remark}
 
\subsection{Higher C*-categories and Higher Fell Bundles} \label{sec: hfb}

After this quite long preparation on involutions and linear structures on strict globular $n$-categories, we are now ready to deal with the main subject of our investigation. We start with the following vertical categorification of the Longo-Roberts 2-C*-categories presented in definition~\ref{def: lr}. 
Here, in perfect continuity with the original definition~\ref{def: lr} (for $n=2$) we have $n$-categories equipped with only one top-level involution $*_{n-1}$; and hence C*-properties for them can be imposed only for the top-level composition/involution pair  $(\circ_{n-1},*_{n-1})$. Abundant classes of examples are naturally fitting such definition as exposed in the subsequent proposition~\ref{prop: nlr}. ``Higher conjugations'', in line with the treatment in~\ref{rem: conjugates} can be introduced, but will not be considered here. 
\begin{definition}
A \emph{strict $n$-C*-category of Longo-Roberts type} $(\Cs,\circ_{n-1},\dots,\circ_0,*_{n-1},+,\cdot, \|\cdot \|)$ is a strict globular $n$-category $(\Cs,\circ_{n-1},\dots,\circ_0)$ that satisfies the following additional properties:
\begin{itemize}
\item
$(\Cs,\circ_{n-1},*_{n-1},+,\cdot, \|\cdot \|)$ is a C*-category, with involution $*_{n-1}$, 
\item 
$*_{n-1}$ is a covariant functor on $(\Cs,\circ_k)$, for all $0\leq k<n-1$, 
\item
all the partial bifunctors $\circ_{k}$, for $0\leq k<n-1$, when restricted to $\circ_k$-composable $k$-home-sets, are bilinear and norm submultiplicative.
\end{itemize} 
A \emph{$*$-functor} $(\Cs,\circ_{n-1},\dots,\circ_0,*_{n-1})\xrightarrow{\phi}(\hat{\Cs},\circ_{n-1},\dots,\circ_0,*_{n-1})$ between 
$n$-C*-categories of Longo-Roberts type is just a functor between the underlying $n$-categories such that 
$\phi(x^{*_{n-1}})=\phi(x)^{\hat{*}_{n-1}}$, for all $x\in \Cs$. 

\medskip 

A \emph{natural transformation} between $*$-functors (and inductively a \emph{$k$-transfor $\Phi:\Cs^{0}\to\Cs^k$} between $(k-1)$-transfors), of $n$-C*-categories of Longo-Roberts type, is always assumed to be a natural transformation (respectively a $k$-transfor) that is \emph{bounded} i.e.~$\sup_{x\in\Cs^{n-k}}\|\Phi(x)\|<\infty$. 
\end{definition}

\begin{remark}
For $n=1$, since the second and third properties are ``vacuous'' (there are no compositions $\circ_k$ with $k<n-1=0$), the previous definition reproduces C*-categories and for $n=2$ we reobtain Longo-Roberts 2-C*-categories in 
definition~\ref{def: lr}. 
Notice that $(\Cs,\circ_0,\dots,\circ_{n-1},+,\cdot)$ is an $n$-algebroid at level-$p$, for all $p=0,\dots,n-1$, that is Banach with respect to the unique norm $\|\cdot\|$, and the category $\Cs$ is only partially involutive (with only one involution $*_{n-1}$). 

In the previous definition we have for now assumed that the strict globular $n$-category $(\Cs,\circ_0,\dots,\circ_{n-1})$ satisfies the usual exchange property, so the partially defined compositions $\circ_k$, $0\leq k<n$ are bifunctors on $(\Cs,\circ_q)$, for all $q\neq k$. 
This requirement can be relaxed as can be seen in the more general definition~\ref{def: nC*}.  

As natural transformations are $1$-transfors, their boundedness requirement is $\sup_{x\in \Cs^{n-1}}\|\Phi(x)\|<\infty$. 
\xqed{\lrcorner}
\end{remark}

Examples of $n$-C*-categories of Longo-Roberts type essentially reduce to a 1-C*-category living ``on the top'' of a commutative $(n-1)$-C*-category with only one involution; anyway, the following examples are interesting and naturally occur in the theory. 

\begin{example}\label{ex: lr} 
The family of (bounded) natural transformations of $*$-functors of small \hbox{1-C*-categories} is a Longo-Roberts \hbox{2-C*-category}, 
cf.~\cite[section~7]{LR}. 
Let $\Cs^{(2)}$ denote the family of such natural transformations.  
We construct a 2-C*-category $(\Cs^{(2)},\circ_0,\circ_1,*_1,+,\cdot,\| \cdot \|)$ as follows: 
\begin{itemize}
\item
the vertical composition $\xymatrix{\Cs  \ruppertwocell^\phi{\Psi} \rlowertwocell_\xi{\Phi} \ar[r]|{\psi} & \hat{\Cs}}$ of bounded natural transformations $\Psi,\Phi\in\Cs^{(2)}$ between the $*$-functors $\Cs\xrightarrow{\phi,\psi,\xi}\hat{\Cs}$ of the C*-categories $(\Cs,\circ, *,+,\cdot,\|\ \|_\Cs)$, and 
$(\hat{\Cs},\hatcirc,\hat{*},\hatplus,\hatcdot,\|\cdot \|_{\hat{\Cs}})$, is the natural transformation from 
$\phi$ to $\xi$ defined, for all $o\in \As^0$, by $(\Phi\circ_1\Psi)_o:=(\Phi_o)\hatcirc (\Psi_o)$; the boundedness follows from $\|(\Phi\circ_1\Psi)_o\|_{\hat{\Cs}}\leq\|\Phi_o\|_{\hat{\Cs}}\cdot\|\Psi_o\|_{\hat{\Cs}}$. 
\item 
the horizontal composition 
$\xymatrix{\Cs \rtwocell^{\phi_1}_{\psi_1}{\Theta}& \tilde{\Cs} \rtwocell^{\phi_2}_{\psi_2}{\Omega}& \hat{\Cs}}$ of bounded natural transformations $\Theta,\Omega\in \Cs^{(2)}$ is defined, for all $o\in \Cs^0$, by  
$\psi_2(\Theta_o)\hatcirc \Omega_{\phi_1(o)}=\Omega_{\psi_1(o)}\hatcirc \phi_2(\Theta_o)$. This expression yields indeed a natural transformation between the $*$-functors $\phi_2\circ\phi_1$ and $\psi_2\circ\psi_1$ whose boundedness readily follows from $\|\psi_2(\Theta_o)\circ \Omega_{\phi_1(o)}\|_{\hat{\Cs}}\leq 
\|\psi_2(\Theta_o)\|_{\hat{\Cs}}\cdot\| \Omega_{\phi_1(o)}\|_{\hat{\Cs}}\leq \|\Theta_o\|_{\widetilde{\Cs}} \cdot 
\| \Omega_{\phi_1(o)}\|_{\hat{\Cs}}$, since every $*$-functor between C*-categories is automatically bounded~\cite{GLR}.
\item 
Given a natural transformation $\xymatrix{\Cs\rtwocell^{\phi}_{\psi}{\Phi}& \hat{\Cs}}$ in $\Cs^{(2)}$, the map 
$\Phi^{*_1}: o\mapsto(\Phi_o)^{\hat{*}}$, for $o\in \Cs^0$, defines a natural transformation 
$\xymatrix{\Cs\rtwocell^{\phi}_{\psi}{^\Phi^{*_1}\ }& \hat{\Cs}}$ in $\Cs^{(2)}$: 
for all $x\in \Cs$, since $\Phi_{t(x)}\hatcirc\phi(x)=\psi(x)\hatcirc\Phi_{s(x)}$, we have  
$\Phi_{t(x)}^{\hat{*}}\hatcirc \psi(x)=(\psi(x^*))\hatcirc\Phi_{t(x)}^{\hat{*}}= 
(\Phi_{s(x)}\hatcirc\phi(x^*))^{\hat{*}}= 
\psi(x)\hatcirc \Phi_{s(x)}^{\hat{*}}$. Furthermore since the identities 
$(\Phi^{*_1})^{*_1}=\Phi$, $(\Phi\circ_1\Psi)^{*_1}=\Psi^{*_1}\circ_1\Phi^{*_1}$ and $(\Phi\circ_0\Psi)^{*_1}=\Phi^{*_1}\circ_0\Psi^{*_1}$ are satisfied, we have that $\Phi\mapsto \Phi^{*_1}$ provides an involution over 1-arrows for the 2-category $(\Cs^{(2)},\circ_0,\circ_1)$.  
\item 
Pointwise linear combinations of bounded natural transformations 
$\alpha\cdot\Phi+\Psi:o\mapsto \alpha\hatcdot \Phi_o\hatplus \Psi_o$, with $\alpha\in \CC$, are bounded natural transformations, since 
$\|\alpha\hatcdot \Phi_o \hatplus \Psi_o\|_{\hat{\Cs}}\leq|\alpha|\ \|\Phi_o\|_{\hat{\Cs}}+\|\Psi_o\|_{\hat{\Cs}}$, and hence the home-sets $\Cs^{(2)}_{\psi\phi}$ are vector spaces with such linear structure. 

Since $\Phi\circ_j(\alpha\cdot \Psi_1+\Psi_2)=\alpha\cdot (\phi\circ_j\Psi_1)+(\Phi\circ_j\Psi_2)$, for $j=0,1$, and similarly for the first argument, the previously defined compositions $\circ_0,\circ_1$ are home-set-wise bilinear maps. 

The involution $*_0$ is home-set-wise conjugate linear: $(\alpha\cdot\Phi+\Psi)^{*_1}=\cj{\alpha}\cdot\Phi^{*_1}+\Psi^{*_1}$. 
\item 
Every home-set $\Cs^{(2)}_{\psi\phi}$ becomes a normed space with the norm $\|\Phi\|:=\sup_{o\in \Cs^0}\|\Phi_o\|_{\hat{\Cs}}$. 
Since every Cauchy net $(\Phi^{(\lambda)})_{\lambda\in \Lambda}$ induces, for all objects $o\in \Cs^0$, a Cauchy net 
$(\Phi_o^{(\lambda)})_{\lambda\in \Lambda}$ in the Banach space $\hat{\Cs}^1_{\psi(o)\phi(o)}$, the pointwise limit 
$\Phi:o\mapsto \lim_{\lambda\in \Lambda}(\Phi^{(\lambda)}_o)$ exists. $\Phi$ is a natural transformation (as can be seen passing to the limit in the expression $\psi(x)\hatcirc \Phi^{(\lambda)}_{s(x)}= \Phi^{(\lambda)}_{t(x)}\hatcirc \phi(x)$, using the continuity of $\hatcirc$) and is bounded: 
$\|\Phi_o^{(\lambda)}\|_{\hat{\Cs}}\leq \|\Phi^{(\lambda)}_o-\Phi^{(\mu)}_o\|_{\hat{\Cs}}+\|\Phi^{(\mu)}_o\|_{\hat{\Cs}}\leq 
\|\Phi^{(\lambda)}-\Phi^{(\mu)}\|+\sup_{o\in \Cs^0}\|\Phi^{(\mu)}_o\|_{\hat{\Cs}}\leq \epsilon+\|\Phi^{(\mu)}\|$, 
eventually in $\lambda$, for all 
$o\in \Cs^0$. As a consequence $\Cs^{(2)}_{\psi\phi}$ is a Banach space.
\item 
The inequality $\|\Phi\circ_1\Psi\|\leq\|\Phi\|\cdot\|\Psi\|$ 
is obtained taking the supremum of the pointwise submultiplicativity of the norms in 
$\hat{\Cs}$ and similarly, for the C*-property,  \\ 
$\|\Phi^{*_1}\circ_1\Phi\|=\sup_{o\in \Cs^0}\|\Phi_o^{*_1}\hatcirc \Phi_o\|_{\hat{\Cs}}
=(\sup_{o\in \Cs^0}\|\Phi_o\|_{\hat{\Cs}})^2=\|\Phi\|^2$.  
\\  
Hence $(\Cs^{(2)},\circ_1,*_1,+,\cdot,\|\cdot \|)$ is a 1-C*-category and so $\Cs^{(2)}_{\phi\phi}$ is a C*-algebra for all $*$-functors $\phi$. 
Finally observe that, for each $\Cs\xrightarrow{\phi}\hat{\Cs}$, $\Cs^{(2)}_{\phi\phi}$ can be isometrically embedded into $\bigoplus_{o\in \Cs^0} \hat{\Cs}_{\phi(o)\phi(o)}$ in the obvious way, from which the positivity of every $\Phi^* \circ_1 \Phi$ follows at once. 
\xqed{\lrcorner} 
\end{itemize} 
\end{example}

\begin{proposition}\label{prop: nlr}
The category of small strict globular $n$-C*-categories of Longo-Roberts type with strict bounded $n$-transfors is an $(n+1)$-C*-category of 
Longo-Roberts type.
\end{proposition}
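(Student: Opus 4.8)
The plan is to build on Theorem~\ref{th: n-tr}, which already supplies the underlying strict globular $(n+1)$-category, and then to install the C*-structure at the top level by a pointwise argument that directly generalizes Example~\ref{ex: lr} (the case $n=1$). Write $\Cf^{(n+1)}$ for the family in question, whose $(n+1)$-cells are the bounded $n$-transfors between bounded lower transfors between $*$-functors of the given small strict globular $n$-C*-categories of Longo-Roberts type. First I would check that this subfamily is closed under all the compositions $\circ_0,\dots,\circ_n$ coming from Theorem~\ref{th: n-tr}: composites of $*$-functors are again $*$-functors (and are automatically bounded by~\cite{GLR}), while boundedness of transfors is preserved because each composition is controlled, at the top level, by the submultiplicative composition of the target category. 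This yields the required strict globular $(n+1)$-category $(\Cf^{(n+1)},\circ_0,\dots,\circ_n)$.

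Next I would introduce the top C*-data. The key observation is that the top composition acts pointwise, $(\Xi\circ_n\Theta)(A)=\Xi(A)\hatcirc_{n-1}\Theta(A)$ for all objects $A$, so I define the linear structure blockwise and pointwise, the involution by $\Xi^{*_n}(A):=\Xi(A)^{\hat{*}_{n-1}}$, and the norm by $\|\Xi\|:=\sup_{A}\|\Xi(A)\|$. Because in each target $\hat{\Cs}$ the data $(\hatcirc_{n-1},\hat{*}_{n-1},\hatplus,\hatcdot,\|\cdot\|)$ already forms a C*-category, the C*-category axioms for $(\Cf^{(n+1)},\circ_n,*_n,+,\cdot,\|\cdot\|)$ follow exactly as in Example~\ref{ex: lr}: each block is a Banach space (a sup-norm Cauchy net converges pointwise in the complete top blocks of $\hat{\Cs}$, the pointwise limit is again a transfor by continuity of $\hatcirc_{n-1}$ and is bounded), submultiplicativity and the C*-identity are obtained by taking suprema of the pointwise estimates, and positivity of $\Xi^{*_n}\circ_n\Xi$ follows from the isometric $*$-embedding of each diagonal block into the product C*-algebra $\prod_{A}\hat{\Cs}_{s(\Xi(A))s(\Xi(A))}$, where positivity is checked componentwise.

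It then remains to verify the two Longo-Roberts conditions relating $*_n$ and the norm to the lower compositions $\circ_k$, $0\le k<n$. For each such $k$ the composition of $n$-transfors is built, at the top level, from applications of the component $*$-functors and lower transfors followed by the top composition $\hatcirc_{n-1}$; since $*$-functors are linear and contractive on blocks~\cite{GLR}, the top composition is bilinear and submultiplicative, and the lower transfors are bounded, I obtain blockwise bilinearity of $\circ_k$ and, after taking suprema, the submultiplicative estimate $\|\Xi\circ_k\Theta\|\le\|\Xi\|\cdot\|\Theta\|$, generalizing the horizontal-composition computation of Example~\ref{ex: lr}. Finally, the covariance $(\Xi\circ_k\Theta)^{*_n}=\Xi^{*_n}\circ_k\Theta^{*_n}$ reduces pointwise to the behaviour of $\hat{*}_{n-1}$ in $\hat{\Cs}$: it is a covariant functor for every $\hatcirc_j$ with $j<n-1$ and intertwines with $*$-functor application via $\phi(x)^{\hat{*}_{n-1}}=\phi(x^{*_{n-1}})$, and together with the interchange identities expressing $\circ_k$ in two equivalent ways this gives the desired covariance.

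The main obstacle is the last paragraph: the explicit combinatorics of the lower transfor compositions $\circ_k$ for general $n$, whose defining formulas are considerably heavier than the single horizontal composition of Example~\ref{ex: lr}. I expect the cleanest way to control this bookkeeping is to organise it along the recursive construction of the $(n+1)$-category of transfors underlying Theorem~\ref{th: n-tr}, checking bilinearity, submultiplicativity, and the covariance of $*_n$ one transfor-level at a time, with Example~\ref{ex: lr} as the base case; the only genuinely analytic input needed throughout is the automatic contractivity of $*$-functors between C*-categories, which underlies every norm estimate.
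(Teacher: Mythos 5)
Your proposal is correct and follows essentially the same route as the paper: both rest on Theorem~\ref{th: n-tr} for the underlying strict globular $(n+1)$-category, then install all C*-data (blockwise linear structure, sup-norm, involution, compositions) componentwise so that every axiom — completeness, submultiplicativity, the C*-identity, positivity via embedding of diagonal blocks into a product/direct sum of C*-algebras — reduces to pointwise verifications in the target categories, organized inductively with Example~\ref{ex: lr} as the base case and the automatic contractivity of $*$-functors~\cite{GLR} as the analytic input. The paper is merely more explicit about the $\circ_0$-composition formula (whiskering by the component $*$-functors) and terser about the remaining bookkeeping, which you defer to the recursion in the same way.
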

\begin{proof}
For $n=1$, the statement is described in the previous example~\ref{ex: lr}. 
Inductively, assuming the result for $n$, we prove it for $n+1$. 
Let $\Cf^{(n)}$ be a family of small globular $n$-C*-categories of Longo-Roberts type and, for all $\Cs,\hat{\Cs}\in \Cf^{(n)}$, consider the family of bounded $k$-transfors $\xymatrix{\Cs \rtwocell^{\Phi}_{\Psi}{\ \ \Xi^{(k)}}& \hat{\Cs}}$, for $k=1,\dots,n$, between $*$-functors $\Phi,\Psi:\Cs\to\hat{\Cs}$. 
By theorem~\ref{th: n-tr}, $\Cf^{(n)}$, with such bounded $n$-transfors, is already a strict globular $(n+1)$-category. 
For all $A\in \Cs^0$, the component $\Xi^{(n)}_A$, of a bounded $n$-transfor, between $k$-transfors $\Theta^{(k)},\Omega^{(k)}$, for $k=0,\dots,n-1$, is a globular $n$-cell 
$\xymatrix{{\Phi(A)} \rrtwocell^{\Omega^{(k)}_A}_{\Theta^{(k)}_A}{\quad \quad \Xi^{(n)}_A}& & {\Psi(A)}}$, in the \hbox{$n$-C*-category} 
$(\hat{\Cs},\hatcirc_0,\dots,\hatcirc_{n-1},\hat{\dag}_{n-1},\hatcdot,\hatplus,\| \cdot \|_{\hat{\Cs}})$ of Longo-Roberts type, hence $\Xi^{(n)}_A$ belongs to the Banach space $\hat{\Cs}_{\Theta^{(n-1)}_A\Omega^{(n-1)}_A}$.  
Addition and multiplication by scalars for $n$-transfors are defined ``componentwise'' by 
$(\Xi^{(n)}+\hat{\Xi}^{(n)})_A:=\Xi^{(n)}_A\, \hatplus \, \hat{\Xi}^{(n)}_A$ and $(\alpha\cdot\Xi^{(n)})_A:=\alpha\, \hatcdot \, \Xi^{(n)}_A$, and so the home-set $\Cf^{(n)}_{\Theta^{(n-1)}\Omega^{(n-1)}}$ of \hbox{$n$-transfors}, between $\Omega^{(n-1)}$ and $\Theta^{(n-1)}$, with the supremum norm $\|\Xi^{(n)}\|:=\sup_{A\in \As^0}\|\Xi^{(n)}_A\|_{\hat{\Cs}}$, is a Banach space. 

Compositions of $n$-transfors over objects of $\Cf^{(n)}$ are defined componentwise by the formula 
\\ 
$\xymatrix{\Cs \rtwocell^{\Phi_1}_{\Psi_1}{\ \ \Xi_1^{(n)}}& 
\hat{\Cs}\rtwocell^{\Phi_2}_{\Psi_2}{\ \ \Xi_2^{(n)}}& {\tilde{\Cs}}}$, \quad  
$(\Xi_2^{(n)}\circ_0 \Xi_1^{(n)})_A:=(\Xi_1^{(n)})_{\Psi_1(A)}\, {\tilde{\circ}}_0 \, \Phi_2((\Xi_1^{(n)})_A)$, \quad for $A\in \As^0$; 
\\
similarly, for $k=1,\dots,n$, the remaining compositions $\circ_k$ in $\Cf^{(n)}$ and the unique involution $*_n$ of $\Cf^{(n)}$ are also defined componentwise by 
$(\Xi^{(n)}\circ_k\hat{\Xi}^{(n)})_A:=\Xi^{(n)}_A \, \hatcirc_{k-1}\, \hat{\Xi}^{(n)}_A$, and 
$(\Xi^{(n)})^{*_{n}}_A:=(\Xi^{(n)}_A)^{\hat{\dag}_{n-1}}$. 

It follows that distributivity of compositions, the C*-property $\|(\Xi^{(n)})^{*_n}\circ_n\Xi^{(n)}\|=\|\Xi^{(n)}\|^2$ and the positivity of $(\Xi^{(n)})^{*_n}\circ_n\Xi^{(n)}$ in the C*-algebra ${}_{(n-1)}\Cf^{(n)}_{\Omega^{(n-1)}\Omega^{(n-1)}}$, are all derived by direct componentwise calculations, making use of the fact that $\Cs$ is a C*-category of Longo-Roberts type. 
\end{proof}

We now state our main definition of $n$-C*-category with non-commutative exchange. 

\begin{definition}\label{def: nC*} 
A \emph{fully involutive strict globular $n$-C*-category with non-commutative exchange} (also called \emph{quantum fully involutive strict globular $n$-C*-category}), is a fully involutive strict \hbox{$n$-cat}\-egory, with non-commutative exchange, denoted as $(\Cs,\circ_0,\dots,\circ_{n-1},*_0,\dots,*_{n-1},+,\cdot,\|\cdot\|)$, such that:~\footnote{\label{foo: norms} 
\emph{Important note}: our simple assumption here of a unique norm having the C*-property for all pairs $(\circ_k,*_k)$, will turn out to be too restrictive for non-trivial examples. A more appropriate axiomatic will allow, for all $k\in 0,\dots, n-1$, a different norm $\|\cdot\|_k$ having the C*-property only for the pair $(\circ_k,*_k)$. 
Although in the finite dimensional case all norms are equivalent and hence the Banach property of the fibers is satisfied at the same time for all $\|\cdot\|_k$ (as in the case of the examples in theorem~\ref{th: obs}), in this context the simultaneous requirement of Banach completeness for all norms $\|\cdot\|_k$ seems likely to be too restrictive. A possible solution will be to require the stronger property of completeness in the uniformity induced by the family of all norms. We will return elsewhere on these infinite dimensional technical details. 
}
\begin{itemize}
\item
$(\Cs,\circ_0,\dots,\circ_{n-1},+,\cdot)$ is an $n$-algebroid at every level $p=0,\dots,n-1$,
\item
for all $a,b\in \Cs^{n-1}$, the home-set $\Cs_{ab}$ is a Banach space with the norm $\|\cdot\|$,\footnote{Usually in examples $\Cs_{ab}$ is actually a Banach space, but it might also be a disjoint union of Banach spaces or possibly even a horizontal categorification of a $\CC$-vector space (where only the diagonal additive home-sets are Banach spaces).} 
\item 
for all $0\leq p<n$, $\|x\circ_p y\|\leq\|x\|\cdot\|y\|$, whenever $x\circ_p y$ exists, 
\item 
for all $0\leq p<n$, $\|x^{*_p}\circ_p x\|=\|x\|^2$, holds for all $x\in \Cs$,\footnote{Note that the conditions here assumed already imply that for all level-$p$ identities $e\in \Cs^p$ the home-set ${}_{(n-1)}\Cs^n_{ee}$ is a C*-algebra $({}_{(n-1)}\Cs_{ee},\circ_p,*_p,+,\cdot,\|\cdot \|)$, with composition $\circ_p$ and involution $*_p$.} \footnote{
Imposing the C*-property only whenever $x^{*_p}\circ_px\in{}_{(n-1)}\Cs_{ee}$, where $e$ is the $p$-source of $x$, is certainly possible, but it is a less restrictive condition that would result in a much more general structure (as soon as $n>1$).} 
\item 
for all $0\leq p<n$, $x^{*_p}\circ_p x$ is positive in ${}_{(n-1)}\Cs_{ee}$, where $e$ is the $p$-source of $x$. 
\end{itemize}
A \emph{partially involutive strict globular $n$-C*-category with non-commutative exchange} 
(also called \emph{quantum partially involutive strict globular $n$-C*-category}) will be equipped with only a subfamily of the previous involutions and will satisfy only those properties that can be formalized using the existing involutions. 
\end{definition}

\begin{remark}
Of course we can state the previous definition imposing the more restrictive exchange property (in which case we will omit the term ``non-commutative/quantum'', in the denomination), \hbox{$n$-C*-categories} of Longo-Roberts type are just special cases of partially involutive strict globular \hbox{$n$-C*-categories}. 
\xqed{\lrcorner}
\end{remark}

\medskip 

We now examine the most natural elementary examples of strict globular $n$-C*-categories. 

We start with a C*-categorical version of examples~\ref{ex: intertwiners} and~\ref{ex: intertwiners2}. 
\begin{example}\label{ex: 2c*}
Consider a family $\Cs^0$ of unital C*-algebras and let $\Cs^1$ be the groupoid of invertible $*$-homomorphisms between C*-algebras in $\Cs^0$. 
The family $\Cs^2$ of intertwiners $\xymatrix{\As \rtwocell^{\Phi}_{\Psi}{e}& \Bs}$, i.e.~the elements $e\in \Bs$ such that 
$e\cdot_\Bs\Phi(x)=\Psi(x)\cdot_\Bs e$, for all $x\in \As$, becomes a fully involutive 2-C*-category with the usual operations of composition and involution described in example~\ref{ex: intertwiners}. 

\medskip 

Again, there is a horizontal categorification of this result in parallel with example~\ref{ex: intertwiners2}. 
If $\Cs^0$ is a family of 1-C*-categories, and $\Cs^1$ is the family of the invertible $*$-isomorphisms between 
1-C*-categories in $\Cs^0$, the family $\Cs^2$ of bounded natural transformations (1-transfors) 
$\xymatrix{\As \rtwocell^{\Phi}_{\Psi}{\Xi}& \Bs}$ between invertible \hbox{$*$-functors} in $\Cs^1$, becomes a fully involutive 2-C*-category with the same compositions and involutions considered in example~\ref{ex: intertwiners2}. 
\xqed{\lrcorner}
\end{example}

We also have a general recursive construction of $n$-C*-categories in the spirit of theorems~\ref{th: n-tr} 
and~\ref{th: iso-n-tr}. 
\begin{theorem}
The family of small totally involutive $n$-C*-categories with bounded strict $n$-transfors, between invertible $*$-functors, constitutes a fully involutive $(n+1)$-C*-category. 
\end{theorem}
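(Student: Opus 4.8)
The plan is to proceed by induction on $n$, combining the purely categorical content of theorem~\ref{th: iso-n-tr} with the analytic (C*-)estimates obtained componentwise in the spirit of proposition~\ref{prop: nlr} and example~\ref{ex: lr}. The base case $n=1$ is precisely the second assertion of example~\ref{ex: 2c*}: bounded natural transformations between invertible $*$-functors of small 1-C*-categories already form a fully involutive 2-C*-category. For the inductive step, suppose the claim holds for $n$ and let $\Cf^{(n)}$ be a family of small totally involutive $n$-C*-categories, with $1$-cells the invertible $*$-functors and top $(n+1)$-cells the bounded $n$-transfors between them. By theorem~\ref{th: iso-n-tr}, $\Cf^{(n)}$ is already a fully involutive strict globular $(n+1)$-category: all compositions $\circ_0,\dots,\circ_n$ and all involutions $*_0,\dots,*_n$, together with their covariance/contravariance, are in place. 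It therefore remains only to install the linear and Banach structure and to verify, for every level $p\in\{0,\dots,n\}$, the algebroid, submultiplicativity, C*-property and positivity axioms of definition~\ref{def: nC*}.

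First I would fix the norm. For a top $(n+1)$-cell $\Xi\colon\Cs^0\to\hat{\Cs}^n$ (from $*$-functors $\Phi,\Psi\colon\Cs\to\hat{\Cs}$) set $\|\Xi\|:=\sup_{A\in\Cs^0}\|\Xi(A)\|_{\hat{\Cs}}$, the supremum being finite by the boundedness built into the definition of bounded $n$-transfor. Addition, scalar multiplication, the compositions and the involutions are all defined componentwise exactly as in the proof of proposition~\ref{prop: nlr}: for $p\geq 1$ one has $(\Xi\circ_p\Xi')(A)=\Xi(A)\hatcirc_{p-1}\Xi'(A)$ and $\Xi^{*_p}(A)=\big(\Xi(A)\big)^{\hat{\dag}_{p-1}}$, while $\circ_0$ and $*_0$ are the ``new'' operations over objects supplied by theorem~\ref{th: iso-n-tr}, mediated by the functor actions $\Phi,\Psi$ and the inverse functors $\Phi^{-1},\Psi^{-1}$. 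Completeness of each block follows as in example~\ref{ex: lr}: a Cauchy net of $(n+1)$-cells induces, for each $A$, a Cauchy net in the complete block of the target, whose pointwise limit is again an $n$-transfor by continuity of all the $\hatcirc_{p-1}$; blockwise bilinearity of the $\circ_p$ and conjugate-linearity of the $*_p$ are immediate from the componentwise formulas, giving the $n$-algebroid structure at every level.

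For the levels $p\in\{1,\dots,n\}$ the verification is a routine transfer through the supremum. Taking the supremum over $A$ in the submultiplicativity $\|\Xi(A)\hatcirc_{p-1}\Xi'(A)\|\le\|\Xi(A)\|\,\|\Xi'(A)\|$ yields $\|\Xi\circ_p\Xi'\|\le\|\Xi\|\,\|\Xi'\|$; the identity $\|(\Xi^{*_p}\circ_p\Xi)(A)\|=\|\Xi(A)^{\hat{\dag}_{p-1}}\hatcirc_{p-1}\Xi(A)\|=\|\Xi(A)\|^2$ gives $\|\Xi^{*_p}\circ_p\Xi\|=\|\Xi\|^2$; and positivity of $\Xi^{*_p}\circ_p\Xi$ in the relevant diagonal C*-block is obtained, as in the closing remark of example~\ref{ex: lr}, by an isometric componentwise embedding into a direct sum of the corresponding diagonal C*-algebras of $\hat{\Cs}$, each of which is a C*-algebra for the pair $(\hatcirc_{p-1},\hat{\dag}_{p-1})$ by the inductive total involutivity of the objects.

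The main obstacle is the level $p=0$, where $\circ_0$ and $*_0$ are not literal componentwise operations but are mediated by the $*$-functors and their inverses. The decisive input is that an invertible $*$-functor between totally involutive $n$-C*-categories is isometric: it restricts to $*$-isomorphisms of the diagonal C*-algebras, hence is isometric there by the $n$-categorical analogue of the Ghez--Lima--Roberts automatic boundedness of $*$-functors, and the C*-identity then propagates isometry to every off-diagonal block; the same applied to its $*$-functor inverse forces equality. With this, the supremum defining $\|\Xi\|$ is unchanged under precomposition with $\Psi^{-1}$ and postcomposition with $\Phi^{-1}$, so that $\|\Xi^{*_0}\|=\|\Xi\|$, and the level-$0$ C*-identity $\|y^{\hat{\dag}_0}\hatcirc_0 y\|=\|y\|^2$ valid in $\hat{\Cs}$ (which holds because $\hat{\Cs}$ carries the level-$0$ C*-structure of a totally involutive $n$-C*-category) propagates through the supremum to give $\|\Xi^{*_0}\circ_0\Xi\|=\|\Xi\|^2$; submultiplicativity and positivity for $(\circ_0,*_0)$ follow by the same isometric bookkeeping, reducing them to the level-$0$ axioms of $\hat{\Cs}$. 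Having checked all axioms of definition~\ref{def: nC*} at every level $p=0,\dots,n$, together with the full involutivity already granted by theorem~\ref{th: iso-n-tr}, we conclude that $\Cf^{(n)}$ is a totally involutive $(n+1)$-C*-category, completing the induction.
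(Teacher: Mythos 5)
Your proof follows essentially the same route as the paper's: induction with base case example~\ref{ex: 2c*}, theorem~\ref{th: iso-n-tr} supplying the underlying fully involutive strict globular $(n+1)$-categorical structure, componentwise sup-norm definitions disposing of the levels $p=1,\dots,n$, and reduction of the level-$0$ axioms to the level-$0$ C*-axioms of the object categories through the invertible $*$-functors. The only difference is one of explicitness: you spell out the isometry of invertible $*$-functors (via the Ghez--Lima--Roberts contractivity argument applied to the functor and its inverse), a fact the paper's closing computation $\sup_{A}\|\Phi^{-1}(\Xi_A)\|_{\As}^{2}=\|\Xi\|^{2}$ uses tacitly.
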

\begin{proof}
From example~\ref{ex: 2c*} we have a fully involutive 2-C*-category $\Cf^{(1)}$ of bounded 1-transfors between $*$-isomorphisms of a given family of 1-C*-categories. 

Let $\Cf^{(n)}$ be a family of fully involutive $n$-C*-categories $\As,\hat{\As},\dots$ equipped with bounded $k$-transfors, for $k=1,\dots,n$, $\xymatrix{\As \rtwocell^{\Phi}_{\Psi}{\ \ \Xi^{(k)}}& \hat{\As}}$, between invertible $*$-functors $\Phi,\Psi$. 

Since every $n$-C*-category is an $(n-1)$-C*-category (by truncating to its $(n-1)$-arrows), $\Cf^{(n)}$ can be seen as a family of fully involutive $(n-1)$-C*-categories. Together with the family of bounded \hbox{$k$-transfors}, for $k=0,\dots,n-1$, $\Cf^{(n)}$ becomes a fully involutive $n$-C*-category, by the inductive hypothesis.  

We need to show that $\Cf^{(n)}$, with the bounded $k$-transfors, for $k=0,\dots,n$, is also a fully involutive \hbox{$(n+1)$-C*-category}. 
By theorem~\ref{th: iso-n-tr}, we know that $\Cf^{(n)}$, with the family of bounded $k$-transfors, for $k=0,\dots,n$, is a fully involutive $(n+1)$-category. 

Recall that each component $\Xi^{(n)}_A$, $A\in \As^0$, of a strict bounded $n$-transfor $\Xi^{(n)}:\As^0\to\hat{\As}^n$, between $k$-transfors $\Theta^{(k)},\Omega^{(k)}$, for $k=0,\dots,n-1$, between invertible $*$-functors $\Phi,\Psi$ from $\As$ to $\hat{\As}$, is a globular $n$-cell 
$\xymatrix{{\Phi(A)} \rrtwocell^{\Omega^{(k)}_A}_{\Theta^{(k)}_A}{\quad \quad \Xi^{(n)}_A}& & {\Psi(A)}}$, in the 
$n$-C*-category $(\hat{\As},\hatcirc_0,\dots,\hatcirc_{n-1},\hat{\dag}_0,\dots,\hat{\dag}_{n-1},\hatcdot,\hatplus,
\| \cdot \|_{\hat{\As}})$.   
Since $\Xi^{(n)}_A$ is an element of the Banach space $\hat{\As}_{\Theta^{(n-1)}_A\Omega^{(n-1)}_A}$, we can immediately define ``componentwise'' the operations of addition and multiplication by scalars for $n$-transfors: 
$(\Xi^{(n)}+\hat{\Xi}^{(n)})_A:=\Xi^{(n)}_A\, \hatplus \, \hat{\Xi}^{(n)}_A$ and $(\alpha\cdot\Xi^{(n)})_A:=\alpha\, \hatcdot \, \Xi^{(n)}_A$; furthermore, with the norm $\|\Xi^{(n)}\|:=\sup_{A\in \As^0}\|\Xi^{(n)}_A\|_{\hat{\As}}$, the family $\Cf^{(n)}_{\Theta^{(n-1)}\Omega^{(n-1)}}$ of \hbox{$n$-transfors}, between $\Omega^{(n-1)}$ and $\Theta^{(n-1)}$ is a Banach space. 

Since compositions $\circ_k$ and involutions $*_k$ in the $(n+1)$-category $\Cf^{(n)}$, for $k=1,\dots,n$ are similarly componentwise defined by 
$(\Xi^{(n)}\circ_k\hat{\Xi}^{(n)})_A:=\Xi^{(n)}_A \, \hatcirc_{k-1}\, \hat{\Xi}^{(n)}_A$, 
$(\Xi^{(n)})^{*_k}_A:=(\Xi^{(n)}_A)^{\hat{\dag}_{k-1}}$, the distributivity with respect to addition of $\circ_1,\dots,\circ_n$ is valid and, for the same reasons, the positivity of $(\Xi^{(n)})^{*_k}\circ_k\Xi^{(n)}$ and the C*-properties $\|(\Xi^{(n)})^{*_k}\circ_k\Xi^{(n)}\|=\|\Xi^{(n)}\|^2$, for $k=1,\dots,n$, hold. 

We only need to show the distributivity of $\circ_0$, the positivity of $(\Xi^{(n)})^{*_0}\circ_0\Xi^{(n)}$ and the C*-property $\|(\Xi^{(n)})^{*_0}\circ_0\Xi^{(n)}\|=\|\Xi^{(n)}\|^2$. 

The distributivity follows immediately from the definition of $\circ_0$-composition of $n$-transfors: 
\\ 
$\xymatrix{\As \rtwocell^{\Phi^{(0)}_1}_{\Psi^{(0)}_1}{\ \ \Xi_1^{(n)}}& 
\hat{\As}\rtwocell^{\Phi^{(0)}_2}_{\Psi^{(0)}_2}{\ \ \Xi_2^{(n)}}& {\tilde{\As}}}$, \quad  
$(\Xi_2^{(n)}\circ_0 \Xi_1^{(n)})_A:=(\Xi_1^{(n)})_{\Psi_1^{(0)}(A)}\, {\tilde{\circ}}_0 \, \Phi_2^{(0)}((\Xi_1^{(n)})_A)$, \quad for $A\in \As^0$, 
\\
and from the distributivity of compositions over objects in the small $n$-C*-categories belonging to $\Cf^{(n)}$. 

For the C*-property and the positivity, we see that, for all $A\in \As^0$: 
\begin{align*} 
((\Xi^{(n)})^{*_0}\circ_0\Xi^{(n)})_A&=((\Xi^{(n)})^{*_0})_{\Psi(A)}\hatcirc_0\Phi^{-1}(\Xi^{(n)}_A)=
\Phi^{-1}(\Xi^{(n)}_A)^{\hat{\dag}_0}\hatcirc_0\Phi^{-1}(\Xi^{(n)}_A), \ \text{hence}
\\ 
\|((\Xi^{(n)})^{*_0}\circ_0\Xi^{(n)})\|&=\sup_{A\in \As^0}\|((\Xi^{(n)})^{*_0}\circ_0\Xi^{(n)})_A\|_{\As}
=\sup_{A\in \As^0}\|\Phi^{-1}(\Xi^{(n)}_A)^{\hat{\dag}_0}\hatcirc_0\Phi^{-1}(\Xi^{(n)}_A)\|_{\As}
\\ 
&=\sup_{A\in \As^0}\|\Phi^{-1}(\Xi^{(n)}_A)\|_{\As}^2=\|\Xi^{(n)}\|^2   
\end{align*}
and $\Phi^{-1}(\Xi^{(n)}_A)^{\hat{\dag}_0}\hatcirc_0\Phi^{-1}(\Xi^{(n)}_A)$ is positive, for all $A\in \As^0$, in the 
C*-algebra ${}_{(n-1)}\As_{AA}$. 
\end{proof}

The following examples utilize some elementary properties of Hilbert C*-modules~\cite{Pa,Ri1,Ri2} and Rieffel's Morita theory for imprimitivity Hilbert C*-bimodules~\cite{Ri3}; the reader is referred, for example, to~\cite[section III.7, definition III.7.6.1]{Bl} and also~\cite{BCL-} for such properties and further references. 

\begin{example}\label{ex: 2bimod}
A fully involutive 2-C*-category $(\Cf,\circ_0,\circ_{1},\cj{\phantom{x}},*,+,\cdot,\|\cdot\|)$,
of bi-adjointable morphisms of imprimitivity Hilbert C*-bimodules can be constructed as follows: 
\begin{itemize}
\item 
Consider as objects a given family $\Cf^0:=\{\As,\Bs,\dots\}$ of unital C*-algebras.
\item 
Take as 1-arrows the family $\Cf^1$ consisting of all the imprimitivity C*-bimodules between the unital C*-algebras in the family $\Cf^0$. Specifically, we denote by ${}_\As\Ms_\Bs$ an $\As$-$\Bs$-imprimitivity bimodule thought as a 1-arrow with source $\Bs$ and target $\As$, for any $\As,\Bs\in\Cf^0$. 
Whenever two C*-algebras $\As,\Bs\in\Cf^0$ are not Morita equivalent, we take $\Cf^1_{\As\Bs}=\varnothing$.
\item 
Define, for any ${}_\As\Ms_\Bs,{}_\As\Ns_\Bs\in\Cf^1$, the family $\Cf^2_{\Ns\Ms}$ of 2-arrows with source $\Ms$ and target $\Ns$ consisting of all the homomorphisms ${}_\As\Ms_\Bs\xrightarrow{\Phi}{}_\As\Ns_\Bs$
of such bimodules (i.e.~those additive maps such that $\Phi(a\cdot x\cdot b)=a\cdot \Phi(x)\cdot b$, for all $a\in \As$, $b\in\Bs$, $x\in \Ms$) that are \emph{bi-adjointable} in the sense that there exists a necessarily unique homomorphism of bimodules $\Phi^*:\Ns\to \Ms$ that is simultaneously right-adjoint (i.e.~with respect to the $\Bs$-valued inner product) and left-adjoint (i.e.~with respect to the $\As$-valued inner product) to $\Phi$.\footnote{For commutative C*-algebras, if a homomorphism of bimodules that is both right and left adjointable, the right and left adjoints must coincide.} 
\item 
We take $*:\Cf^1\to\Cf^1$ as the involution over 1-arrows and we note immediately that 
$(\Phi^*)^*=\Phi$. 
\item
As ``vertical'' composition of 2-arrows, we consider the usual composition of adjointable homomorphisms of imprimitivity bimodules (this is necessarily biadjointable since the composition of right/left adjointable maps between right/left-correspondences is right/left adjointable) and we see that $(\Phi\circ_1\Psi)^*=\Psi^*\circ_1\Phi^*$, for all $\Phi,\Psi\in{}_0\Cf^2_{\As\Bs}$, with $\Psi:{}_\As\Ms_\Bs\to{}_\As\Ns_\Bs$ and $\Phi:{}_\As\Ns_\Bs\to{}_\As\Ps_\Bs$. 
\item 
In order to define the ``horizontal'' composition of 2-arrows, construct first the 1-groupoid $\Xf$ of the imprimitivity Hilbert C*-bimodules in $\Cf^1$ under Rieffel tensor products and conjugations and consider the tautological Fell bundle over $\Xf$, with fibers $\Xs$ over $\Xs\in\Xf$, whose total space is the free involutive 1-category generated by all the elements $x\in \Ms$ with $\Ms\in\Cf^1$, where the fiberwise composition 
$(x,y)\mapsto x\otimes_\Bs y$, for $(x,y)\in{}_\As\Xs_\Bs\times{}_\Bs\Ys_\Cs$, strictly implements the Rieffel tensor product of the fibers $\Xs,\Ys\in\Xf$ and the fiberwise conjugation $x\mapsto \cj{x}$, for $x\in\Xs\in\Xf$, implements the Rieffel dual of the fibers $\Xs\in\Xf$. 

The ``horizontal'' composition of 2-arrows ${}_\Bs\Ms^2_{\Cs}\xrightarrow{\Psi}{}_\Bs\Ns^2_{\Cs}$, 
${}_\As\Ms^1_{\Bs}\xrightarrow{\Phi}{}_\As\Ns^1_{\Bs}$ is obtained, via the universal factorization property for Rieffel internal tensor products of imprimitivity bimodules, as the unique homomorphism of bimodules satisfying $(\Phi\circ_0\Psi)(x\otimes_\Bs y):=\Phi(x)\otimes_\Bs\Psi(y)$, for all 
$x\in \Ms^1$ and $y\in \Ms^2$. This is well-defined, since $\Phi\circ_0\Psi :{}_\As\Ms^1\otimes_\Bs \Ms^2_\Cs \to {}_\As\Ns^1\otimes_\Bs \Ns^2_\Cs$ is bi-adjointable when $\Phi,\Psi$ are bi-adjointable.  
Furthermore we also have that $(\Phi\circ_0\Psi)^*=\Phi^*\circ_0 \Psi^*$. 
\item 
The involution over objects of a 2-arrow ${}_\As\Ms_\Bs\xrightarrow{\Phi}{}_\As\Ns_\Bs$ is obtained as the bi-adjointable homomorphism of bimodules ${}_\Bs\cj{\Ns}_\As\xrightarrow{\cj{\Phi}}{}_\Bs\cj{\Ms}_\As$ (where ${}_\Bs\cj{\Ms}_\As$ denotes the Rieffel conjugate of ${}_\As\Ms_\Bs$), as fibers in the previous strictification Fell bundle, defined by $\cj{\Phi}(\cj{x}):=\cj{\Phi(x)}$, for all $\cj{x}\in \cj{\Ms}$. 
\item
The identities $\cj{(\Phi\circ_0\Psi)}=\cj{\Psi}\circ_0\cj{\Phi}, \quad \cj{(\Phi\circ_1\Psi)}=
\cj{\Phi}\circ_1 \cj{\Psi}, \quad \cj{\cj{\Phi}}=\Phi$, $(\cj{\Phi})^*=\cj{\Phi^*}$ and the usual exchange property $(\Phi\circ_0\Psi)\circ_1(\Theta\circ_0\Xi)=(\Phi\circ_1\Theta)\circ_0(\Psi\circ_1\Xi)$, hold true, as can be checked by a direct computation. 
\item
Each home-set $\Cf^2_{\Ns\Ms}$, with ${}_\As\Ms_\Bs,{}_\As\Ns_\Bs\in \Cf^1_{\As\Bs}$ becomes a normed space with linear structure defined by $(\Phi+\lambda\cdot\Psi)(x):=\Phi(x)+\lambda\cdot\Psi(x)$, for all $x\in \Ms$, $\lambda\in \CC$ and with norm given by 
$\|\Phi\|:=\sup_{x\in\Ms, \|x\|_\Ms\leq 1}\|\Phi(x)\|_\Ns$. The norm is well-defined since, by closed graph theorem, an adjointable operator between Hilbert C*-modules is bounded.

With respect to such linear spaces, all compositions are home-set-wise bilinear and all the involutions are conjugate linear. 
\item 
The $\circ_1$-submultiplicativity of the norm $\|\Phi\circ_1\Psi\|\leq\|\Phi\|\cdot\|\Psi\|$ holds for any functional composition of bounded linear maps. 
Since right (respectively left) adjointable maps of right (left) Hilbert C*-modules are a C*-category (and in our case the right and left norms of bi-adjointable homomorphisms coincide) the $\circ_1$-C*-property $\|\Phi^*\circ_1\Phi\|=\|\Phi\|^2$ holds. 
\item
In order to prove the $\circ_0$-submultiplicativity, given the pair of 2-arrows ${}_\As\Ms^1_\Bs\xrightarrow{\Phi}{}_\As\Ns^1_\Bs$ and ${}_\As\Ms^2_\Bs\xrightarrow{\Psi}{}_\As\Ns^2_\Bs$, denoting by $I_{\Ms^1}$, $I_{\Ms^2}$ the identities of the respective bimodules and by $\Cs_\Ms$ the C*-algebra $(\Cs_{\Ms\Ms},\circ_1,*)$, for $\Ms\in\Cs^1$, we notice that since the map $\Xi\mapsto\Xi\circ_0I_{\Ms^2}$, is a 
$*$-homomorphism between the C*-algebras $\Cs_{\Ms^1}$ and $\Cs_{\Ms^1\otimes_\Bs\Ms^2}$, and in a similar way the map 
$\Theta\mapsto I_{\Ms^1}\circ_0\Theta$ is a $*$-homomorphism between the C*-algebras $\Cs_{\Ms^2}$ and $\Cs_{\Ms^1\otimes_\Bs\Ms^2}$, they are both contractive; and hence we have:  
\begin{align*}
\|\Phi &\circ_0\Psi\|^2=\|(\Phi\circ_0\Psi)^*\circ_1(\Phi\circ_0\Psi)\|=
\|(\Phi^*\circ_1\Phi)\circ_0(\Psi^*\circ_1\Psi)\|
\\ 
&=\|[(\Phi^*\circ_1\Phi)\circ_0 I_{\Ms^2}]\circ_1[I_{\Ms^1}\circ_0(\Psi^*\circ_1\Psi)]\|
\\
&\leq \|(\Phi^*\circ_1\Phi)\circ_0 I_{\Ms^2}\|\cdot \|I_{\Ms^1}\circ_0(\Psi^*\circ_1\Psi)\|
\leq\|\Phi^*\circ_1\Phi\| \cdot \|\Psi^*\circ_1\Psi\| = (\|\Phi\|\cdot\|\Psi\|)^2. 
\end{align*}
\item 
To show the $\circ_0$-C*-property, first of all we have $\|\cj{\Phi}\circ_0\Phi\|\leq\|\cj{\Phi}\|\cdot\|\Phi\|\leq\|\Phi\|^2$, since $\|\cj{\Phi}\|=\|\Phi\|$. 

The second inequality is obtained, for $\Phi:{}_\As\Ms_\Bs\to{}_\As\Ns_\Bs$, as follows: 
\begin{align*}
&\|\cj{\Phi} \circ_0\Phi\|^2 \geq \sup_{x\in \Ms, \|x\|\leq 1}\|\cj{\Phi}(\cj{x})\otimes_\As\Phi(x)\|^2
=\sup_{x\in \Ms,\|x\|\leq 1} \|\ip{\cj{\Phi}(\cj{x})\otimes_\As\Phi(x)}{\cj{\Phi}(\cj{x})\otimes_\As\Phi(x)}_\Bs\| 
\\
&=\sup_{x\in \Ms,\|x\|\leq 1} \|\ip{\Phi(x)}{\ip{\cj{\Phi(x)}}{\cj{\Phi(x)}}_\As\Phi(x)}_\Bs\|
=\sup_{x\in \Ms,\|x\|\leq 1}\|\ip{\Phi(x)}{{}_\As\ip{\Phi(x)}{\Phi(x)}\Phi(x)}_\Bs\|
\\ 
&=\sup_{x\in \Ms,\|x\|\leq 1}\|\ip{\Phi(x)}{\Phi(x)\ip{\Phi(x)}{\Phi(x)}_\Bs}_\Bs\|
=\sup_{x\in \Ms,\|x\|\leq 1}\|\ip{\Phi(x)}{\Phi(x)}_\Bs\ip{\Phi(x)}{\Phi(x)}_\Bs\|
\\ 
&=\sup_{x\in \Ms,\|x\|\leq 1}\|\ip{\Phi(x)}{\Phi(x)}\|^2
=\|\Phi\|^4. 
\end{align*}
\item 
For the completeness of $\Cf^2_{\Ns\Ms}$, given a Cauchy net $\Phi_\mu\in \Cf^2_{\Ns\Ms}$, for all $x\in \Ms$, we see that the net $\Phi_\mu(x)$ is Cauchy in the Banach space $\Ms$ and converges to $\Phi(x)$. From the $\circ_1$-C*-property and the \hbox{$\circ_1$-submultiplicativity} of the norm, we obtain the isometry of the $*$-involution, and hence $\|\Phi_\mu^*\|=\|\Phi_\mu\|$, and so $\Phi^*_\mu(y)$ is a Cauchy net as well, for all $y\in \Ns$. 
Passing to the limit in the bi-adjointability conditions ${}_\As\ip{\Phi_\mu(x)}{y}={}_\As\ip{x}{\Phi^*_\mu(y)}$ and 
$\ip{\Phi_\mu(x)}{y}_\Bs=\ip{x}{\Phi^*_\mu(y)}_\Bs$, for $\Phi_\mu$, we immediately obtain that the map $x\mapsto \Phi(x)$ is bi-adjointable, hence linear and bounded, and the convergence in $\Cf^2_{\Ns\Ms}$ of the net $\Phi_\mu$. 
\item
By Eckmann-Hilton collapse, the C*-algebra of intertwiners of the Morita identity bimodule $\As$ is a commutative C*-algebra under the common product $\circ_0=\circ_1$ (see also P.Zito~\cite{Z}). 

Since two involutions that satisfy the C*-property, for a common product and the same norm,  necessarily coincide (see H.F.Bohnenblust-S.Karlin~\cite[theorem~9]{BK}) we see that $\Phi^*=\cj{\Phi}$, for all $\Phi\in {}_{1}\Cf^{2}_{\As\As}$. Hence, for all such intertwiners $\Phi$, we have $\cj{\Phi}\circ_0\Phi=\Phi^*\circ_1\Phi$ that is a positive element in ${}_{1}\Cf^{2}_{\As\As}$.
\xqed{\lrcorner}
\end{itemize}
\end{example}

\begin{example}
As a particular case of example~\ref{ex: 2bimod}, if $\Cs$ is a full 1-C*-category, the family of bi-adjointable endomorphisms $\xymatrix{B\rtwocell^{\Cs_{AB}}_{\Cs_{AB}}{\Phi}& A}$ of each one of the imprimitivity Hilbert C*-bimodules $\Cs_{AB}$, $A,B\in \Cs^0$, is a fully involutive 2-C*-category (where all the 2-arrows are loops over 1-arrows). 
\xqed{\lrcorner}
\end{example}

We describe here a horizontal categorification of example~\ref{ex: 2bimod}. 
For this purpose, we recall (see P.Mitchener~\cite[section~8]{M}) a preliminary definition of Hilbert C*-bimodule between 1-C*-categories: this is just a ``C*-operator algebraic'' version of the usual notion of ``categorical bimodule'' (the horizontal categorification of a bimodule over a monoid). 

\begin{example}\label{ex: hbim}
We have a fully involutive 2-C*-category of bi-adjointable maps between imprimitivity bimodules of full 1-C*-categories. 
\xqed{\lrcorner}
\end{example}

The following remark goes in the direction of a vertical categorification of C*-Morita theory. 
\begin{remark}\label{rem: mor} 
Although here we are not entering into further details, there is little doubt that it is possible to produce a vertical categorification of example~\ref{ex: 2bimod} providing a recursive construction of (fully involutive) higher \hbox{C*-categories} (with non-commutative exchange) and an ``operator categorical'' analog of theorem~\ref{th: iso-n-tr}; namely, given a family $\Cf^0$ of (fully involutive) n-C*-categories, the bi-adjointable morphims between pairs of imprimitivity bimodules ${}_{\Cs_1}\Ms_{\Cs_2}\xrightarrow{\Phi} {}_{\Cs_1}\Ns_{\Cs_2}\in \Cf^1$, between 
$n$-C*-categories $\Cs_1,\Cs_2\in \Cf^0$, are the $(n+1)$-arrows of a (fully involutive) $(n+1)$-C*-category $\Cf$.  
To deal with such a construction, one needs to introduce the notion of (imprimitivity) higher-C*-bimodules between \hbox{$n$-C*-categories} along the lines already mentioned in~\cite{BCL3}. 

\medskip 

There is a 2-categorical functor (injective on $2$-arrows and surjective on objects) from the strict fully involutive 2-C*-category of example~\ref{ex: 2c*} into the strictified fully involutive 2-C*-category of example~\ref{ex: 2bimod}, that to every isomorphism $\Phi:\As\to\Bs$ of C*-categories associates the C*-categorical imprimitivity Hilbert C*-bimodule ${}_\Phi\Bs$, obtained by left-twisting by $\Phi$, the identity bimodule ${}_\Bs\Bs_\Bs$ and associating to every intertwiner 
$\xymatrix{\As\rtwocell^{\Phi}_{\Psi}{\Xi}& \Bs}$ the bi-adjointable morphism of C*-categorical bimodules 
${}_\Phi\Bs\xrightarrow{M_\Xi}{}_\Psi\Bs$, $M_{\Xi_A}:({}_\Phi\Bs)_{\Phi(A)A}\to ({}_\Psi\Bs)_{\Psi(A)A}$ defined as 
$M_{\Xi_A}(b):=\Xi_A\cdot_\Bs b$, for all $b\in ({}_\Phi\Bs)_{\Phi(A)A}$, for all $A\in \As^0$. 
Such functor could be extended to a functor between strict fully involutive $n$-C*-categories. 

\medskip 

As a particular case, we mention a vertical categorification of example~\ref{ex: hbim}: if $\Cs$ is a fully involutive $n$-C*-category, the family of bi-adjointable endomorphisms of the home-sets $\Cs_{xy}$, with $x,y\in\Cs^{n-1}$, is a fully involutive $(n+1)$-C*-category.
\xqed{\lrcorner} 
\end{remark}

\begin{example}\label{rem: conjugates2} 
We continue here, examining the C*-categorical properties, the study of involutions induced by conjugations already started in example~\ref{rem: conjugates}. 

If $(\Cs,\otimes,\circ,*,\cdot,+,\| \ \|)$ is a Longo-Roberts 2-C*-category equipped with a unital involutive tensorial conjugation map that satisfies the traciability condition, the resulting fully involutive strict 2-category $(\Cs,\otimes,\circ,*,\dag,\cdot,+,\| \ \|)$ is an example of a fully involutive 2-category. 

Under our previous conditions the (unique) folding map is an involutive endofunctor of the C*-category 
$(\Cs,\circ, *,\cdot,+,\|\ \|)$ and hence it is a norm contractive map. Since it is involutive we obtain $\|\Phi_\bullet\|=\|\Phi\|$ and we immediately get the isometric property of the $\dag$-involution:  $\|\Phi^\dag\|=\|(\Phi^*)_\bullet\|=\|\Phi^*\|=\|\Phi\|$. 

Consider the following \emph{unitarity condition} for the conjugations maps $x\mapsto (R_x,\cj{R}_x)$: for all $x\in \Cs^1$ such that $\cj{x}\otimes x, x\otimes\cj{x}\in \Cs^0$, $R_x$ and $\cj{R}_x$ are unitary elements of the C*-category $(\Cs,\circ,*,+, \cdot, \| \cdot \|)$, i.e.~for $x\in \Cs^1_{BA}$, such that $\cj{x}\otimes x=\iota^2(B)$ and $x\otimes\cj{x}=\iota^2(A)$, $R_x$ is a unitary element of the C*-algebra ${}_1\Cs_{BB}$ and $\cj{R}_x$ is a unitary element in the C*-algebra ${}_1\Cs_{AA}$. 

Under such unitarity condition, a 2-C*-category of Longo-Roberts type, with unital involutive tensorial conjugations $(R_x,\cj{R}_x)$ that satisfy traciability, becomes a fully involutive 2-C*-category. 

In order to prove this statement, we recall that from section~\ref{rem: conjugates} $(\Cs,\circ,*,+, \cdot, \| \cdot \|)$ is naturally equipped with a structure of fully involutive 2-category. We only need to show the C*-property $\|\Phi^\dag\otimes\Phi\|=\|\Phi\|^2$ and the positivity of $\Phi^\dag\otimes \Phi$, whenever $\Phi^\dag\otimes\Phi$ belongs to the C*-algebra ${}_1\Cs_{AA}$, where $A=s_0(\Phi)\in\Cs^0$.  

Whenever $\xymatrix{A\rtwocell^x_y{\Phi}&B}$ is such that $\Phi^\dag\otimes\Phi\in {}_1\Cs_{AA}$, we always have 
$\cj{x}\otimes x=\iota^2(B), \cj{y}\otimes y=\iota^2(B)$ and hence $R_x, R_y\in {}_1\Cs_{BB}$ and $\cj{R}_x, \cj{R}_y\in\Cs_{AA}$ are unitary elements in the respective C*-algebras. 

By the fact that left/right tensorization with elements of $\Cs^1$ is a C*-functor and C*-functors are always norm contractive in a C*-category, we have $\|\Phi\| \leq \|\cj{x}\otimes (\Phi^*\circ\Phi)\|\leq \|x\otimes\cj{x}\otimes\Phi\|=\|\iota^2(A)\otimes\Phi\|=\|\Phi\|$.
Hence, since conjugation by unitary element is a norm preserving operation, we immediately obtain: 
\begin{equation}\label{eq: otimesc}
\|\ip{\Phi}{\Phi}_B\|=\|R_x\circ(\cj{x}\otimes (\Phi^*\circ\Phi))\circ R^*_x\|=\|\cj{x}\otimes (\Phi^*\circ\Phi)\|=\|\Phi^*\circ\Phi\|=\|\Phi\|^2.   
\end{equation}
A direct computation of $\Phi^\dag\otimes \Phi$ gives: 
\begin{align*}
\Phi^\dag\otimes\Phi&=(\Phi^*)_\bullet\otimes \Phi
= [(R_x^*\otimes \cj{y})\circ(\cj{x}\otimes \Phi^*\otimes \cj{y})\circ(\cj{x}\otimes \cj{R}_y)]\otimes \Phi
\\ 
&=  (R_x^*\otimes \cj{y}\otimes y)\circ 
[(\cj{x}\otimes \Phi^*\otimes \cj{y}\otimes y)\circ 
(\cj{x}\otimes y \otimes \cj{y}\otimes \Phi) ] \circ 
(\cj{x}\otimes \cj{R}_y\otimes x) 
\\ 
&= R_x^*\circ 
[\cj{x}\otimes (\Phi^*\circ\Phi)] \circ 
(\cj{x}\otimes \cj{R}_y\otimes x).  
\end{align*}
Making use of unitarity and tensoriality of the conjugations: 
\begin{align*}
\|\Phi^\dag\otimes\Phi\|&=\|R_x^*\circ [\cj{x}\otimes (\Phi^*\circ\Phi)] \circ (\cj{x}\otimes \cj{R}_y\otimes x)\|
=\|[\cj{x}\otimes (\Phi^*\circ\Phi)] \circ (\cj{x}\otimes \cj{R}_y\otimes x)\|
\\
&=\|[\cj{x}\otimes (\Phi^*\circ\Phi)] \circ (\cj{x}\otimes \cj{R}_y\otimes x)\circ R_x\| 
=\|[\cj{x}\otimes (\Phi^*\circ\Phi)] \circ \cj{R}_{\cj{x}\otimes y}\| 
=\|\cj{x}\otimes (\Phi^*\circ\Phi)\|  
\end{align*}
and the C*-property follows comparing to equation~\ref{eq: otimesc}. 

Notice that, by Eckmann-Hilton argument, the C*-algebras ${}_1\Cs_{BB}$ is commutative and for elements 
$\Phi,\Psi\in {}_1\Cs_{BB}$, $\Phi\circ\Psi=\Phi\otimes \Psi$. Furthermore, for $\Phi\in {}_1\Cs_{BB}$, by the $\otimes$-C*-property and~\cite{BK}, $\Phi^\dag=\Phi^*$. 

\medskip 

Regarding positivity, under the requirement of \emph{triviality} of conjugations $(R_x,\cj{R}_x)=(\iota^2(B),\iota^2(A))$, for all 1-arrows $x$ in the groupoid of invertible elements of the category $(\Cs^1,\otimes)$, we clearly have that 
\begin{equation*}
\Phi^\dag\otimes\Phi=\ip{\Phi}{\Phi}_B=\cj{x}\otimes(\Phi^*\circ\Phi) \quad 
\text{is positive for all $\Phi\in {}_1\Cs_{BB}$.}
\end{equation*} 

The seemingly strong requirement of triviality of conjugations is actually mild: from the already available assumptions of unitality, involutivity, tensoriality, it follows that conjugations are Hermitian $R_x=R_x^\dag=R_x^*$. The unitarity, for $x$ invertible in $(\Cs^1,\otimes)$, essentially says that $R_x$ is a continuous function with modulus one on the spectrum of the commutative C*-algebra ${}_1\Cs_{t(x)t(x)}$ and hence constant $\pm 1$ on each connected component. 
In such a context, it is likely that in many cases of interest the standard choice of triviality for conjugations (up to scalars) is forced from the other requirements. 
\xqed{\lrcorner}
\end{example}

\begin{remark}
For $n=2$, when $\Cs^0$ consists of only one object, our definition of partially involutive strict globular 2-C*-category with non-commutative exchange is compatible with the generalization of monoidal C*-categories recently described by R.Blute-M.Comeau~\cite{BC}.  
\xqed{\lrcorner}
\end{remark}

\begin{remark}
For $n=2$, again when $\Cs^0$ consists of only one object, our definition of a partially involutive strict globular 2-C*-category with non-commutative exchange is a special case of the semitensor C*-categories introduced by S.Doplicher-C.Pinzari-R.Zuccante~\cite[section~2]{DPZ}. 
\xqed{\lrcorner}
\end{remark}

The usual process of ``bundlification'' can be applied to our definition of strict quantum $n$-C*categories: 

\begin{definition}
An \emph{$n$-Fell bundle (with non-commutative exchange)} is given by a Banach bundle $(\Es,\pi,\Xs)$ where $\pi:\Es\to\Xs$ is an $n$-$*$-functor between fully involutive topological strict $n$-categories (with non-commutative exchange), such that: 
\begin{itemize}
\item 
the compositions $\circ_p$ are bilinear whenever defined,\footnote{This means that $(\Es,\pi,\Xs)$ is an $n$-algebroid at all levels.} 
\item 
the involutions $*_p$ are fiberwise conjugate-linear, 
\item
$\|x\circ_py\|\leq\|x\|\cdot\|y\|$, for all $\circ_p$-composable $x,y\in \Es$, 
\item
$\|x^{*_p}\circ_p x\|=\|x\|^2$ holds, for all $p$, for all $x\in \Es$,\footnote{As in the definition of higher C*-category, imposing the C*-property only whenever $\pi(x^{*_p}\circ_p x)$ is an idempotent (or a $p$-identity) in $\Xs$, is for sure possible, but it results in a much more general structure (even in the case of ordinary Fell bundles).}   
\item 
for all $x\in \Es$, $x^{*_p}\circ_p x$ is positive whenever $\pi(x^{*_p}\circ_p x)$ is an idempotent in $\Xs$.\footnote{This condition is meaningful since the previous axioms already assure that the fiber $\Es_{\pi(x^{*_p}\circ_p x)}$ is a C*-algebra.} 
\end{itemize}
\end{definition}

\subsection{Hypermatrices, Hyper-C*-algebras and Higher Convolutions} \label{sec: hyper-conv} 

In this subsection we finally start to provide the long awaited direct examples of strict (fully) involutive higher C*-categories with non-commutative exchange. We will discuss mostly discrete finite cases, that are already of great interest. 

The first step consists in reformulating the usual ``innocent'' definition of complex square matrix, making it apparently quite ``convoluted'', but ready for generalizations.

\begin{proposition}
A complex square matrix $[x^i_j]\in \MM_{N\times N}(\CC)$ of order $N\in \NN_0$ is a section of the Fell line-bundle $\Es:=\Xs\times\CC$ over the discrete finite pair groupoid $\Xs: \Xs^1\rightrightarrows \Xs^0$ of the set $\Xs^0:=\{1,\dots,N\}$.
\end{proposition}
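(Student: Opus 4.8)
The plan is purely to unwind the definitions, so I would first make precise the base groupoid, then check that $\Es$ is honestly a Fell line-bundle (so that the statement is well-posed), and finally exhibit the identification of matrices with sections. For the pair groupoid $\Xs$ of the finite discrete set $\Xs^0:=\{1,\dots,N\}$, the arrow set is $\Xs^1:=\Xs^0\times\Xs^0$, an arrow $(i,j)$ having source $j$ and target $i$, with composition $(i,j)\circ(j,k):=(i,k)$, identities $(i,i)$, and the inverse furnishing an involution $(i,j)^*:=(j,i)$. Since $\Xs^0$ is finite with the discrete topology, $\Xs$ is a discrete finite topological involutive groupoid, of the form $\Xs^1\rightrightarrows\Xs^0$.

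Next I would verify the Fell-bundle axioms for the trivial line-bundle $\pi:\Es\to\Xs$, $\pi(g,\lambda):=g$, endowed with fiber norm $\|(g,\lambda)\|:=|\lambda|$, fiberwise composition $(g,\lambda)\circ(h,\mu):=(g\circ h,\lambda\mu)$ on composable pairs, and involution $(g,\lambda)^*:=(g^*,\overline{\lambda})$. Discreteness trivializes the Banach-bundle conditions, each fiber being a copy of $\CC$; the map $\pi$ is a continuous $*$-functor by construction; submultiplicativity and the C*-property even hold as equalities, since $|\lambda\mu|=|\lambda|\,|\mu|$ and $|\overline{\lambda}\lambda|=|\lambda|^2$; and positivity holds because whenever $\pi(x^*\circ x)=(j,j)$ is idempotent the relevant fiber is the C*-algebra $\CC$ and $x^*\circ x$ corresponds to the nonnegative real $|\lambda|^2$. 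Hence $\Es$ is a Fell line-bundle over $\Xs$.

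The identification is then immediate: a section $\sigma$ of $\pi$ (automatically continuous, as $\Xs^1$ is discrete) is exactly a map $g\mapsto(g,f(g))$ for a unique $f:\Xs^1\to\CC$, and setting $x^i_j:=f(i,j)$ recovers precisely a matrix $[x^i_j]\in\MM_{N\times N}(\CC)$, the correspondence being a bijection. To see that this reformulation is faithful to the algebraic structure, I would finally observe that the enveloping convolution product and the involution on sections reproduce the matrix operations: for sections $\sigma,\tau$ corresponding to matrices $[x^i_j],[y^i_j]$, since $(i,j)=(i,k)\circ(k,j)$ exhausts the factorizations of $(i,j)$ as $k$ ranges over $\Xs^0$, one computes $(\sigma\circ\tau)(i,j)=\sum_{k}x^i_k\,y^k_j$ (ordinary matrix multiplication) and $\sigma^*(i,j)=\overline{x^j_i}$ (the conjugate transpose).

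I do not expect any genuine obstacle: the content is definitional. The only point requiring care is the bookkeeping of the source/target conventions and of the index placement in $x^i_j$, so that the pair-groupoid composition $(i,k)\circ(k,j)=(i,j)$ yields the row-by-column product rather than its transpose, and so that the groupoid inverse $(i,j)\mapsto(j,i)$ makes the fiber involution come out as the adjoint.
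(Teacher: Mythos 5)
Your proof is correct and takes essentially the same route as the paper's: present the pair groupoid with the source/target conventions, form the trivial Fell line-bundle by attaching a copy of $\CC$ to each arrow, and identify sections with matrices $[x^i_j]$. You go somewhat further than the paper's own proof by explicitly checking the Fell-bundle axioms and by verifying that convolution and the fiberwise involution reproduce row-by-column multiplication and the conjugate transpose --- details the paper leaves implicit in this proposition and defers to the surrounding discussion of convolution algebras.
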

\begin{proof}
To justify the statement, it is sufficient to consider the finite set $\Xs^0:=\{1,\dots,N\}$ together with the finite set of 1-arrows (ordered pairs) $(i,j)\in \Xs^1:=\Xs^0\times\Xs^0$, with source $j$ and target $i$ and note that $\Xs^1$ is naturally a groupoid (actually an equivalence relation with only one equivalence class) under the the usual composition $(i,j)\circ (j,k):=(i,k)$, for all $i,j,k\in \Xs^0$, with inverse given by $(i,j)^{-1}=(j,i)$, for all $i,j\in \Xs^0$ and partial identities $(j,j)$, for all $j\in \Xs^0$. The trivial Fell line-bundle $\Es:=\Xs\times\CC$ over the pair groupoid $\Xs$ is simply obtained by attaching a complex line $\Es_{(i,j)}:=\CC$ to each of the 1-arrows $(i,j)\in \Xs^1$. 
A section of such Fell line-bundle, being a function $x:\Xs^1\to \Es:=\bigcup_{(i,j)\in\Xs^1}\Es_{ij}$ such that $x^i_{j}:=x(i,j)\in \Es_{ij}$, for all $(i,j)\in \Xs^1$, is immediately seen to correspond to a complex square matrix $[x^i_j]$ with entries $x^i_j$, for all $i,j\in \Xs^0$. 
\end{proof}
An alternative way to construct the previous Fell line-bundle consists in considering the complex line $\CC$ as fiber over the space $\{(\bullet,\bullet)\}$, consisting of a unique loop $\xymatrix{\bullet \ar@(ur,dr)[]^{{}^{(\bullet,\bullet)}}}$ with source and target $\bullet$, and the $T$-pull-back $\Es:=T^\bullet(\CC)$ of such trivial one-point Fell bundle, via the unique functor $T:\Xs\to \{(\bullet,\bullet)\}$ that collapses every 1-arrow of the pair groupoid $\Xs$ to the unique loop $(\bullet,\bullet)$. Here is an intuitive picture of the Fell line-bundle $\Es$ (restricted to the base pair subgroupoid generated by the two points $1$ and $N$): 
\begin{equation*}
\xymatrix{
& & & & 
\\
1 \ar@(dl,ul)|{^ {(1,1)}}
\ar@/^/[rrr]|(.2){^{(N,1)}} \ar@{-}@<+20pt>[u]&\ar@{-}@<+14pt>[u] & \ar@{-}@<-14pt>[u] 
& N \ar@(dr,ur)|{^{(N,N)}} \ar@/^/[lll]|(.2){^{(1,N)}} \ar@{-}@<-20pt>[u]}
\end{equation*}
Clearly for the family of continuous sections of $\Es:=T^\bullet(\CC)$ we have $\Gamma(\Xs;\Es)\simeq \MM_{N\times N}(\CC)$ and this construction can be applied in the same way, taking an arbitrary associative complex unital $*$-algebra $\As$ in place of $\CC$, obtaining the $*$-algebra 
$\Gamma(\Xs;T^\bullet(\As))\simeq \MM_{N\times N}(\As)\simeq \MM_{N\times N}(\CC)\otimes_\CC \As$ of $\As$-valued matrices. 

\medskip 

As a second step, we stress that there is no obstacle in generalizing the previous construction, starting with other finite groupoids, or even a finite involutive category, $\Xs$ in place of the previous pair groupoid of the set with $N$ points. 
\begin{proposition}
Given a finite involutive category $(\Xs,\circ,*)$ and a complex unital $*$-algebra $(\As,\cdot,-)$, the family $\Gamma(\Xs;T^\bullet(\As))$ of sections of the Fell bundle $T^\bullet(\As)$ over $\Xs$, obtained by $T$-pull-back of the fiber $\As$ via the terminal functor $T$ from $\Xs$ to the 1-loop space $\{(\bullet,\bullet)\}$, is a $*$-algebra with the operations: 
\begin{gather*}
(\sigma\circ\rho)_z:=\sum_{x\circ y=z}\sigma_x\cdot \rho_y, \quad \forall \sigma,\rho\in \Gamma(\Xs;T^\bullet(\As)), \quad \forall z\in \Xs, 
\\ 
(\sigma^*)_z:=\cj{\sigma_{z^*}}, \quad \forall \sigma\in \Gamma(\Xs;T^\bullet(\As)), \quad \forall z\in \Xs. 
\end{gather*}
\end{proposition}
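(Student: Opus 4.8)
The plan is to identify the section space $\Gamma(\Xs;T^\bullet(\As))$ with the space of $\As$-valued functions on $\Xs$ and then to verify, one axiom at a time, that the stated operations make it into an associative $*$-algebra over $\CC$. Since $\Xs$ is finite it carries the discrete topology, so every set-theoretic section of the pull-back bundle $T^\bullet(\As)$ is automatically continuous and, each fibre being a canonical copy of $\As$, a section is nothing but a map $\sigma:\Xs\to\As$, $z\mapsto\sigma_z$. The componentwise operations $(\sigma+\rho)_z:=\sigma_z+\rho_z$ and $(\alpha\cdot\sigma)_z:=\alpha\,\sigma_z$ then visibly turn $\Gamma(\Xs;T^\bullet(\As))$ into a complex vector space, and finiteness of $\Xs$ guarantees that the convolution sum $(\sigma\circ\rho)_z=\sum_{x\circ y=z}\sigma_x\cdot\rho_y$ ranges over a finite index set (the composable pairs $(x,y)$ with $x\circ y=z$) and is therefore well defined.

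Next I would establish the algebra structure. Bilinearity of $\circ$ follows at once from the distributivity of $\cdot$ over $+$ in $\As$ together with the linearity of finite sums. For associativity I would expand both $((\sigma\circ\rho)\circ\tau)_z$ and $(\sigma\circ(\rho\circ\tau))_z$ into double sums: the first is indexed by triples $(x,y,u)$ with $(x\circ y)\circ u=z$ and the second by triples with $x\circ(y\circ u)=z$. By the associativity axiom of the $1$-category $(\Xs,\circ)$, one composite exists and equals $z$ precisely when the other does, so these two index sets of composable triples coincide; associativity of $\cdot$ in $\As$ then equates the summands term by term. (If unitality is desired, the section $u$ defined by $u_w:=1_\As$ for $w\in\Xs^0$ and $u_w:=0$ otherwise is a two-sided unit, since for fixed $z$ the only composable pairs with an identity factor are $(t(z),z)$ and $(z,s(z))$.)

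Finally I would check the three $*$-algebra conditions. Conjugate-linearity is pointwise: $((\alpha\cdot\sigma+\rho)^*)_z=\cj{\alpha\,\sigma_{z^*}+\rho_{z^*}}=\cj{\alpha}\,\cj{\sigma_{z^*}}+\cj{\rho_{z^*}}=(\cj{\alpha}\cdot\sigma^*+\rho^*)_z$, using conjugate-linearity of $-$ on $\As$. Involutivity follows from $((\sigma^*)^*)_z=\cj{\cj{\sigma_{(z^*)^*}}}=\sigma_z$, where $(z^*)^*=z$ because $*$ is an involution on $\Xs$ and $\cj{\cj{a}}=a$ on $\As$. The one computation that genuinely couples both involutions is antimultiplicativity; here
\[
((\sigma\circ\rho)^*)_z=\cj{\sum_{x\circ y=z^*}\sigma_x\cdot\rho_y}=\sum_{x\circ y=z^*}\cj{\rho_y}\cdot\cj{\sigma_x},
\]
using conjugate-additivity and antimultiplicativity of $-$ on $\As$, whereas $(\rho^*\circ\sigma^*)_z=\sum_{u\circ v=z}\cj{\rho_{u^*}}\cdot\cj{\sigma_{v^*}}$. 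The change of variables $(x,y)\mapsto(u,v):=(y^*,x^*)$ is a bijection between the composable pairs with $x\circ y=z^*$ and those with $u\circ v=z$, because $*$ is a contravariant functor on $\Xs$, giving $y^*\circ x^*=(x\circ y)^*=(z^*)^*=z$; under it $\cj{\rho_{u^*}}=\cj{\rho_y}$ and $\cj{\sigma_{v^*}}=\cj{\sigma_x}$, so the two expressions agree summand by summand.

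The main obstacle is bookkeeping rather than conceptual: one must treat $(\Xs,\circ)$ as a genuinely partial composition (a category, not a monoid) and check that the re-indexings in the associativity and antimultiplicativity steps are honest bijections of the sets of \emph{composable} tuples — respecting the partially-defined nature of $\circ$ — and not mere relabellings of a globally defined operation. Once the contravariance $(x\circ y)^*=y^*\circ x^*$ and the $1$-category axioms have been invoked to match these index sets, every remaining identity reduces to the corresponding axiom in the coefficient $*$-algebra $\As$.
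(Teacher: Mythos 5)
Your proof is correct and matches the approach the paper intends: the paper states this proposition without proof, treating it (as it does for the neighbouring convolution results) as a direct algebraic verification, and your write-up supplies exactly that verification. In particular you correctly handle the only two delicate re-indexing steps — matching composable triples via the partial associativity axiom of $(\Xs,\circ)$ (whenever one of $(x\circ y)\circ u$ and $x\circ(y\circ u)$ exists, so does the other and they agree), and matching composable pairs via the contravariance $(x\circ y)^{*}=y^{*}\circ x^{*}$ of the involution on $\Xs$ — after which every identity reduces, as you say, to the corresponding axiom of the coefficient $*$-algebra $\As$.
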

The resulting $*$-algebra of sections $\Gamma(\Xs; \Es)$ is just the convolution algebra of the groupoid (respectively of the finite involutive category) $\Xs$ and it is usually denoted by $\CC[\Xs]$. In the case of the pair groupoid of a set of $N$ elements, the previous operations reduce exactly to the usual row-by-column multiplication and transpose conjugate involution of matrices in $\MM_{N\times N}(\As)$. Hence we just proved that: 

\textit{the $*$-algebra of matrices is just a special case of the convolution $*$-algebra of a finite $*$-category $\Xs$.}

\medskip 

Finally, as the last step, we examine what happens when, in place of a finite involutive 1-category, we allow a strict finite globular (fully involutive) $n$-category (with or without non-commutative exchange). 
\begin{theorem}\label{th: convo} 
Given a finite strict globular $n$-category $(\Xs,\circ_0,\dots,\circ_{n-1})$ (with usual exchange law or with non-commutative exchange) and an associative unital algebra $(\As,\cdot)$, the family $\Gamma(\Xs;T^\bullet(\As))$ of sections of the bundle $T^\bullet(\As)$ over $\Xs$, obtained by $T$-pull-back of the fiber $\As$ via the terminal functor $T$ from $\Xs$ to the strict globular $n$-category with only one $n$-arrow\footnote{This is the terminal $n$-category in which all the operations coincide and that satisfies the usual exchange law.} is a unital associative algebra with respect to each one of the following convolution operations $\hatcirc_p$, for $p=0,\dots,n-1$: 
\begin{gather*}
(\sigma\hatcirc_p\rho)_z:=\sum_{x\circ_p y=z}\sigma_x\cdot \rho_y, \quad \forall \sigma,\rho\in \Gamma(\Xs;T^\bullet(\As)), \quad \forall z\in \Xs. 
\end{gather*}
The bundle $T^\bullet(\As)=\As\times\Xs$ over $\Xs$ embeds into $\Gamma(T^\bullet(\As))$ via the fiberwise linear maps: 
\begin{equation*}
a_x\mapsto a \cdot (\delta^x)_y \quad \text{where $a\in \As$, $x\in \Xs$ and for all $x,y\in \Xs$} \quad  
(\delta^x)_y:=\begin{cases}
1_\As, \quad \text{if $x=y$}
\\
0_\As \quad \text{if $x\neq y$} 
\end{cases}
\end{equation*}
and becomes a strict globular $n$-category with the restriction of the convolution operations $\hatcirc_0,\dots,\hatcirc_{n-1}$. 
Whenever the algebra $\As$ fails to be commutative, the resulting $n$-category $(T^\bullet(\As),\hatcirc_0,\dots,\hatcirc_{n-1})$ does not satisfy the usual exchange law (even when $\Xs$ does), but satisfies the non-commutative exchange. 
\end{theorem}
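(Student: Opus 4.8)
The plan is to treat the three assertions in sequence, building on the convolution-algebra propositions that immediately precede the theorem. First I would establish, for each fixed $p$, that $(\Gamma(\Xs;T^\bullet(\As)),\hatcirc_p,+,\cdot)$ is a unital associative algebra. Bilinearity of $\hatcirc_p$ in each section is immediate from its fiberwise $\As$-linear definition. For associativity I would expand $((\sigma\hatcirc_p\rho)\hatcirc_p\tau)_z=\sum_{(x\circ_p y)\circ_p v=z}\sigma_x\cdot\rho_y\cdot\tau_v$ and compare with $(\sigma\hatcirc_p(\rho\hatcirc_p\tau))_z=\sum_{x\circ_p(y\circ_p v)=z}\sigma_x\cdot\rho_y\cdot\tau_v$; the two index sets coincide because $(\Xs,\circ_p)$ is a $1$-category, whose associativity axiom guarantees that one triple composite is defined exactly when the other is, and then they agree, while the summands match by associativity of $\cdot$ in $\As$. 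The two-sided unit is the finite section $\sum_{e\in\Xs^p}1_\As\,\delta^e$, where $\Xs^p$ is the (finite) set of $\circ_p$-identities of $\Xs$: evaluating $\bigl(\sum_{e\in\Xs^p}1_\As\,\delta^e\bigr)\hatcirc_p\sigma$ at $z$ leaves only the term with $e=t_p(z)$ and recovers $\sigma_z$. This is the obvious generalization of the preceding category-convolution proposition.

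Next I would identify the embedded copy of $T^\bullet(\As)=\As\times\Xs$. A direct computation of the convolution of two ``delta sections'' gives $a_x\hatcirc_p b_w=(a\cdot b)_{x\circ_p w}$ whenever $x\circ_p w$ is $\circ_p$-composable in $\Xs$, and the zero section otherwise. Declaring the partial composition on delta sections to be defined exactly when this convolution is again a nonzero delta section, we recover on $\As\times\Xs$ the partial $n$-category with operations $a_x\circ_p b_w:=(a\cdot b)_{x\circ_p w}$. I would then verify the three structural axioms of a strict globular $n$-category directly: each $(\As\times\Xs,\circ_p)$ is a $1$-category whose partial identities are exactly $\{(1_\As)_u\ :\ u\in\Xs^p\}$ (fullness of the fibers forces the $\As$-factor to be the unit $1_\As$, and the $\Xs$-factor to be a $\circ_p$-identity of $\Xs$); the inclusions $(\As\times\Xs)^q\subset(\As\times\Xs)^p$ for $q<p$ and the closure $(\As\times\Xs)^p\circ_q(\As\times\Xs)^p\subset(\As\times\Xs)^p$ follow at once from $\Xs^q\subset\Xs^p$ and $\Xs^p\circ_q\Xs^p\subset\Xs^p$ in $\Xs$, since the $\As$-component of a product of identities is $1_\As\cdot1_\As=1_\As$.

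Finally I would compare the two exchange laws. A two-line computation yields, for $q<p$, the identities $(a_x\circ_p b_y)\circ_q(c_w\circ_p d_z)=(abcd)_{(x\circ_p y)\circ_q(w\circ_p z)}$ and $(a_x\circ_q c_w)\circ_p(b_y\circ_q d_z)=(acbd)_{(x\circ_q w)\circ_p(y\circ_q z)}$. When $\Xs$ obeys the usual exchange law the two $\Xs$-indices agree, so equality of the $n$-cells reduces to $abcd=acbd$, i.e.\ to $bc=cb$; taking $x=y=w=z$ a common $\circ_q$-identity of $\Xs$ (whence all composites collapse to it and are defined), $a=d=1_\As$, and $b,c\in\As$ with $bc\neq cb$, exhibits an explicit failure of usual exchange. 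For the relaxed axiom I would take a $\circ_p$-identity $\iota=(1_\As)_u$ with $u\in\Xs^p$ and check $\circ_p$-functoriality of $\iota\circ_q-$: for $\circ_p$-composable $b_s,c_t$ one gets $\iota\circ_q(b_s\circ_p c_t)=(bc)_{u\circ_q(s\circ_p t)}$ and $(\iota\circ_q b_s)\circ_p(\iota\circ_q c_t)=(bc)_{(u\circ_q s)\circ_p(u\circ_q t)}$; the $\As$-components are literally the same product $bc$ (no swap, because the two factors supplied by $\iota$ are both $1_\As$), while the $\Xs$-components coincide precisely by the non-commutative exchange assumed in $\Xs$, namely the $\circ_p$-functoriality of $u\circ_q-$. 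Preservation of $\circ_p$-identities uses $\Xs^p\circ_q\Xs^p\subset\Xs^p$ once more, and $-\circ_q\iota$ is handled symmetrically. The conceptual heart, and the step I expect to be the real difficulty to phrase cleanly rather than to verify, is exactly this contrast: the non-commutativity of $\As$ resides in the interchange of the two ``inner'' factors $b$ and $c$, which the full exchange law demands be transposed but which the relaxed law never touches, because forcing two of the four cells to be identities replaces the corresponding $\As$-factors by $1_\As$.
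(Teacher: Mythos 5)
Your proposal is correct and follows essentially the same route as the paper's (very terse) proof: a direct algebraic verification of associativity, unitality, the $n$-category axioms on the delta sections, and the non-commutative exchange, followed by an argument that usual exchange must fail for non-commutative $\As$. Your explicit counterexample with $a=d=1_\As$ and $b,c$ non-commuting over a common $\circ_q$-identity is precisely the Eckmann--Hilton collapse argument that the paper invokes abstractly (using that the fibers over $p$-identities are isomorphic to $(\As,\cdot)$), so the two proofs coincide in substance, with yours simply writing out the details the paper leaves to the reader.
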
 
\begin{proof}
With the notations introduced above, the results amounts to a direct algebraic verification of associativity unitality and non-commutative exchange for the convolution operations $\hatcirc_p,\dots,\hatcirc_{n-1}$. 
The fact that non-commutative exchange is necessary whenever $\As$ is not abelian follows from the Eckmann-Hilton collapse and the fact that for $x\in \Xs^p$ and $p<n$, the fibers $(T^\bullet(\As)_x,\circ_p)$ are isomorphic to $(\As,\cdot)$ as unital associative algebras. 
\end{proof}

\medskip 

We would like to spend a few words to investigate here those algebraic properties making $\As$ eligible as a ``system of coefficients'' for a \emph{convolution $n$-category} $\Es:=T^\bullet(\As) \subset \MM_\Xs(\As):=\Gamma(\Xs;T^\bullet(\As))$ over an $n$-category $\Xs$ with usual, or with  non-commutative exchange. 

\medskip 

First of all we notice that for any convolution $n$-category $\Es\subset \MM_\Xs(\As)$, the fibers $\Es_\bullet$ over the $n$-identities of an object $\bullet\in \Xs^0$ are isomorphic to $\As$ and hence we can infer the necessary properties of $\As$ from the study of these fibers.
Secondly, for any $n$-categorical bundle $(\Es,\pi,\Xs)$, the fibers $\Es_\bullet$, for $\bullet\in \Xs^0$ are themselves $n$-categories with all the sets of $\circ_p$-identities of cardinality one, for all $p=0,\dots,n-1$. 

\begin{proposition}
Let $(\Es,\circ_0,\dots,\circ_{n-1})$ be a $n$-categorical bundle with non-commutative exchange over the $n$-category $\Xs$. 
For all $p=0,\dots,n-1$, for all $\bullet\in \Xs^0$, the fibers $(\Es_\bullet,\circ_p)$ are a family of (possibly non-commutative) monoids with a common identity (i.e.~such that $\Es_\bullet^0=\Es_\bullet^1=\cdots=\Es_\bullet^{n-1}$). 
\end{proposition}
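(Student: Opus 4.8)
The plan is to separate the statement into its two genuine assertions — that for each fixed $p$ the fibre $(\Es_\bullet,\circ_p)$ is a monoid, and that the $n$ units so obtained coincide — and to treat each in turn. As input I would use the fact, recorded in the remark immediately preceding the statement, that each fibre $\Es_\bullet=\pi^{-1}(\iota^n_0(\bullet))$ is itself a strict globular $n$-category (inheriting non-commutative exchange from $\Es$) in which every identity set $\Es_\bullet^p$ is a singleton. First I would check that the fibre is a genuine sub-$1$-category under each $\circ_p$: since $\pi$ is an $n$-functor and $\iota^n_0(\bullet)$ is a $\circ_p$-identity for every $p$ (the inclusions $\Xs^0\subset\Xs^1\subset\cdots$ make the $n$-fold identity of an object an identity at all levels), one has $\pi(x\circ_p y)=\iota^n_0(\bullet)\circ_p\iota^n_0(\bullet)=\iota^n_0(\bullet)$ whenever $x,y\in\Es_\bullet$, so the fibre is closed under $\circ_p$. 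Functoriality also forces $\pi$ to commute with the $\circ_p$-source and $\circ_p$-target maps, so these stay inside $\Es_\bullet$, and the partial-identity set of $(\Es_\bullet,\circ_p)$ is exactly $\Es_\bullet^p$.

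Next I would establish the monoid property. Fix $p$ and write $\Es_\bullet^p=\{e_p\}$. In the $1$-category $(\Es_\bullet,\circ_p)$ the $\circ_p$-source and $\circ_p$-target of any arrow $x$ are partial identities, and singleton-ness leaves no choice: $s_p(x)=t_p(x)=e_p$ for every $x\in\Es_\bullet$. The third axiom in the definition of a $1$-category — that $f\circ g$ exists whenever the right identity of $f$ is the left identity of $g$ — then makes $\circ_p$ a \emph{total} binary operation, since both identities equal $e_p$ for all arrows. Associativity is inherited verbatim from $(\Es,\circ_p)$ and $e_p$ is a two-sided unit, so $(\Es_\bullet,\circ_p,e_p)$ is a monoid. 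No commutativity is asserted, and indeed none is forced, precisely because we work under the relaxed non-commutative exchange rather than the full exchange law responsible for the Eckmann-Hilton collapse of proposition~\ref{prop: eh}.

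Finally, for the common identity, I would invoke the containment axiom $\Es^q\subset\Es^p$ for $q<p$, which restricts on the fibre to $\Es_\bullet^q\subseteq\Es_\bullet^p$. This displays $\{e_0\}\subseteq\{e_1\}\subseteq\cdots\subseteq\{e_{n-1}\}$ as an increasing chain of singletons, forcing $e_0=e_1=\cdots=e_{n-1}$ and hence $\Es_\bullet^0=\Es_\bullet^1=\cdots=\Es_\bullet^{n-1}$, exactly the parenthetical form of the claim. The only step that requires real attention is the totality of each $\circ_p$: one must verify that a $1$-category with a one-element identity set automatically has everywhere-defined composition via that third axiom. Everything else — closure of the fibre, associativity, unitality, and the collapse of the identity sets — is either inherited from $\Es$ or immediate from the inclusion axiom, so I expect no substantive obstacle beyond that verification.
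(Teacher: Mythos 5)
Your proof is correct and takes essentially the same route as the paper: the paper states this proposition without a separate proof, treating it as an immediate consequence of the observation in the preceding paragraph that the fibers of an $n$-categorical bundle over objects are themselves $n$-categories whose sets of $\circ_p$-identities have cardinality one, and your argument is exactly a careful elaboration of that observation (closure of $\Es_\bullet$ under each $\circ_p$ via functoriality of $\pi$, totality of the composition forced by the singleton identity set together with the third axiom of a $1$-category, and the inclusions $\Es_\bullet^0\subset\Es_\bullet^1\subset\cdots\subset\Es_\bullet^{n-1}$ collapsing the units to a common one). Note only that the singleton property you import as input is asserted rather than proved in the paper too, so your write-up stands on the same footing as, and is more detailed than, the paper's own justification.
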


By remark~\ref{rem: cat-b}, the previous proposition can be directly applied to the case of an $n$-category $\Cs$ yielding conditions on the $n$-diagonal home-sets ${}_{n-1}\Cs^n_{\bullet\bullet}$.  

\begin{remark}
Recall that when $\Es$ is an $n$-category with the usual exchange property, the Eckmann-Hilton collapse induce a strong trivialization, further imposing the coincidence of all the binary operations and their commutativity. 
As a consequence of the previous proposition, if $\As$ is a monoid with respect to $n$-operations, then $\As$ can be taken as a set of coefficients for a convolution $n$-category with non-commutative exchange if and only if, for all $p=0,\dots,n-1$, all the $\circ_p$-identities of the monoids coincide. Moreover, in that case, if $\As$ is a commutative monoid, then it can be taken as a set of coefficients for a convolution $n$-category. 
In particular this explains why we could immediately obtain examples of convolution $n$-categories $\Es$ over an $n$-category $\Xs$ with non-commutative exchange with coefficients in a single algebra (monoid) $\As$, since in this case all the operations in the monoid $\As$ coincide $\circ_0=\cdots=\circ_{n-1}$ and so do their identities.
\xqed{\lrcorner}
\end{remark}

\medskip 

We proceed now to examine what happens when one attempts to define involutions on the convolution $n$-category $\Es\subset\MM_\Xs(\As)$ over an involutive $n$-category $\Xs$ and which conditions must be imposed on the system of coefficients $\As$ in order to obtain such involutions on $\Es$. 

When the base category $\Xs$ has an involution that is contravariant with respect to all the compositions, we can immediately extend theorem~\ref{th: convo}, taking as a system of coefficients an involutive algebra $\As$. 

\begin{proposition} 
For a strict globular $n$-category $(\Xs,\circ_0,\dots,\circ_{n-1},*_\alpha)$ equipped with an \hbox{$\alpha$-involution}, with $\alpha=\{0,\dots,n-1\}$, and a complex unital associative $*$-algebra $(\As,\cdot,*_\As)$, the map 
\begin{gather} \label{eq: invo-cat}
(\sigma^{\hat{*}})_z:=(\sigma_{z^{*_\alpha}})^{*_\As}, \quad \forall \sigma\in \Gamma(T^\bullet(\As)), \quad 
\forall z\in \Xs  
\end{gather}
becomes an involution $\hat{*}$ for all the unital associative algebras $(\Gamma(T^\bullet(\As)),\hatcirc_p)$, for all $p=0,\dots, n-1$ and $(T^\bullet(\As),\hatcirc_0,\dots,\hatcirc_{n-1},\hat{*}_\alpha)$ is a partially involutive $n$-category with an $\alpha$-contravariant involution. 
\end{proposition}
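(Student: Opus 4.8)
The plan is to verify directly, section by section and fibre by fibre, that the map $\hat{*}$ defined in~\eqref{eq: invo-cat} is conjugate-linear, involutive, unital and antimultiplicative with respect to each convolution $\hatcirc_p$, and then to observe that it restricts to the embedded copy of $T^\bullet(\As)$. Everything rests on two ingredients already available: that $*_\alpha$, being an $\alpha$-involution on $\Xs$ with $\alpha=\{0,\dots,n-1\}$, is a bijection of $\Xs$ contravariant for \emph{every} composition $\circ_p$ (so $(x\circ_p y)^{*_\alpha}=y^{*_\alpha}\circ_p x^{*_\alpha}$) and carrying $\circ_p$-identities to $\circ_p$-identities; and that $*_\As$ is a conjugate-linear, antimultiplicative, unit-preserving involution of $\As$. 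Since $\Xs$ is finite, every convolution sum is finite and all manipulations are legitimate.

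First I would dispatch the elementary properties pointwise. Conjugate-linearity of $\hat{*}$ is immediate from that of $*_\As$, since $((\lambda\sigma+\rho)^{\hat{*}})_z=(\lambda\sigma_{z^{*_\alpha}}+\rho_{z^{*_\alpha}})^{*_\As}=\bar\lambda(\sigma^{\hat{*}})_z+(\rho^{\hat{*}})_z$. Involutivity follows because $(z^{*_\alpha})^{*_\alpha}=z$ and $(a^{*_\As})^{*_\As}=a$, giving $((\sigma^{\hat{*}})^{\hat{*}})_z=\sigma_z$. Unit preservation is automatic once antimultiplicativity is known, but can also be seen directly: the $\hatcirc_p$-unit is the section equal to $1_\As$ on $\circ_p$-identities and $0$ elsewhere, and $*_\alpha$ preserves the set of $\circ_p$-identities while $*_\As$ fixes $1_\As$.

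The core step is antimultiplicativity of $\hat{*}$ with respect to each $\hatcirc_p$. I would compute $((\sigma\hatcirc_p\rho)^{\hat{*}})_z=\big(\sum_{x\circ_p y=z^{*_\alpha}}\sigma_x\cdot\rho_y\big)^{*_\As}=\sum_{x\circ_p y=z^{*_\alpha}}(\rho_y)^{*_\As}\cdot(\sigma_x)^{*_\As}$, using antimultiplicativity of $*_\As$ inside each term. The expression to match is $(\rho^{\hat{*}}\hatcirc_p\sigma^{\hat{*}})_z=\sum_{u\circ_p v=z}(\rho_{u^{*_\alpha}})^{*_\As}\cdot(\sigma_{v^{*_\alpha}})^{*_\As}$. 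The substitution $u:=y^{*_\alpha}$, $v:=x^{*_\alpha}$ is a bijection between the two index sets: contravariance of $*_\alpha$ gives $u\circ_p v=(x\circ_p y)^{*_\alpha}=z$ exactly when $x\circ_p y=z^{*_\alpha}$, and $u^{*_\alpha}=y$, $v^{*_\alpha}=x$ make the summands coincide. Hence the two sums agree, so $(\sigma\hatcirc_p\rho)^{\hat{*}}=\rho^{\hat{*}}\hatcirc_p\sigma^{\hat{*}}$ for every $p$, and $\hat{*}$ is simultaneously an involution of all the algebras $(\Gamma(T^\bullet(\As)),\hatcirc_p)$.

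Finally I would restrict to the embedded $n$-category $T^\bullet(\As)$. Evaluating $\hat{*}$ on a generator $a_x\mapsto a\cdot\delta^x$ yields $(a_x)^{\hat{*}}=(a^{*_\As})_{x^{*_\alpha}}$, so $\hat{*}$ preserves $T^\bullet(\As)$ and is conjugate-linear on each fibre. Combined with the contravariance established above (valid for every $\circ_p$, since $\alpha=\{0,\dots,n-1\}$), the preservation of $\circ_p$-identities (from that of $*_\alpha$ and of $1_\As$), and involutivity, this shows $\hat{*}_\alpha$ is an $\alpha$-contravariant involution, so $(T^\bullet(\As),\hatcirc_0,\dots,\hatcirc_{n-1},\hat{*}_\alpha)$ is a partially involutive $n$-category. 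The only delicate point is the bookkeeping of the reindexing bijection in the antimultiplicativity step; everything else is a routine transport of the axioms of $*_\alpha$ and $*_\As$ through the convolution formula.
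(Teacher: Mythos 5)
Your proof is correct: the paper states this proposition without an explicit proof (in keeping with the preceding theorem, whose proof is declared to be ``a direct algebraic verification''), and your fibrewise check is exactly that verification. The key point — the reindexing bijection $(x,y)\mapsto(y^{*_\alpha},x^{*_\alpha})$ between the index sets $\{(x,y)\st x\circ_p y=z^{*_\alpha}\}$ and $\{(u,v)\st u\circ_p v=z\}$, which uses the contravariance of $*_\alpha$ for every $\circ_p$ precisely because $\alpha=\{0,\dots,n-1\}$ — is handled correctly, as are the routine conjugate-linearity, involutivity, unitality and restriction-to-$T^\bullet(\As)$ steps.
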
 

\begin{remark}\label{rem: obs} 
If the involutive unital associative algebra $\As$ is commutative, and the strict globular \hbox{$n$-category} $\Xs$ is $\Lambda$-involutive, formula~\ref{eq: invo-cat} can be used to define $\hat{*}_\alpha$ involutions on $\Gamma(T^\bullet(\As))$, for all $\alpha\in \Lambda$ and hence $T^\bullet(\As)\subset \Gamma(T^\bullet(\As))$ becomes a $\Lambda$-involutive category as well. 
Unfortunately, whenever $\As$ is not abelian, the antimultiplicativity of $*_\As$ conflicts with the covariance/contravariance properties required to define $\alpha$-involutions on $T^\bullet(\As)$ unless $\alpha=\{0,\dots,n-1\}$ (as already stated in the previous proposition). Hence, in order to construct examples of fully involutive strict globular $n$-categories with non-commutative exchange, as ``convolution algebroids'', we need a more elaborate choice of ``involutive algebra of coefficients'' $\As$. 
\xqed{\lrcorner}
\end{remark}

If $(\Es,\circ_0,\dots,\circ_{n-1},*_\alpha)$ is an $n$-category with non-commutative exchange that is $*_\alpha$-involutive, for $\alpha\subset \NN$, the \hbox{$n$-diagonal} home-set $\Es_\bullet$, corresponding to the object $\bullet\in \Es^0$ (that we already know to be a monoid with respect to each one of the operations 
$\circ_p$, $p=0,\dots,n-1$, sharing the same identity) is equipped with an involution $*_\alpha$ maintaining the same covariance/contravariance properties with respect to the monoidal compositions. This introduces further complications in the study of the class of ``systems of coefficients'' for a convolution bundle over a (partially) involutive $n$-category $\Xs$ with non-commutative exchange, as explained in the following result. 

\begin{proposition}\label{prop: obs}
Let $(\Xs,\circ_0,\dots,\circ_{n-1},\Lambda)$ be a (partially) involutive $n$-category, with non-commutative exchange, equipped with a family $\Lambda$ of $\alpha$-involutions $*_\alpha\in \Lambda$. 
Let $(\As,\cdot_0,\dots,\cdot_r,\dag_0,\dots,\dag_s)$ be such that, for all $k=0,\dots,r$, the $(\As,\cdot_k)$ are monoids with a common identity, and let it be equipped with a family of involutions $\dag_j$ for all $j=0,\dots,s$. 
The algebraic structure $\As$ can be a ``system of coefficients'' for a convolution (partially) involutive $n$-category 
$\Es$ over $\Xs$ if and only if it is possible to find a function 
$f:\{(\circ_p,*_\alpha) \ | \ p=0,\dots,n-1, *_\alpha\in \Lambda\}\to \{(\cdot_k,\dag_j)\ | \ k=0,\dots,r,j=0,\dots,s\}$ that is preserving the covariance properties of the pairs.  
\end{proposition}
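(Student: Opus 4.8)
The plan is to reduce the whole statement to a single fiberwise bookkeeping together with the one genuine computation, namely the behaviour of the convolution involution \eqref{eq: invo-cat} under each composition. First I would recall that, as observed just before Proposition~\ref{prop: obs}, for any convolution $n$-categorical bundle $\Es\subset\MM_\Xs(\As)$ the fibres $\Es_\bullet$ over the objects $\bullet\in\Xs^0$ are isomorphic to $\As$; by the preceding proposition on fibres, each $\hatcirc_p$ restricts on $\Es_\bullet$ to a monoid product (all sharing the common identity $1_\As$), and each involution $\hat{*}_\alpha$ restricts to an involution of $\As$. Thus being a \emph{system of coefficients} is exactly the datum of, for every $p$, a product $\cdot_{k(p)}$ on $\As$ realizing $\hatcirc_p$ fibrewise, and for every $\alpha\in\Lambda$, an involution $\dag_{j(\alpha)}$ realizing $\hat{*}_\alpha$ fibrewise; the sought function is then $f(\circ_p,*_\alpha):=(\cdot_{k(p)},\dag_{j(\alpha)})$. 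A preliminary point to dispatch here is that the first component of $f$ depends only on $p$ and the second only on $\alpha$: this holds because the fibre restriction of $\hatcirc_p$ is defined without reference to any involution, forcing $f$ to factor through the two projections.

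For the necessity direction I would simply read $f$ off these restrictions. Since on $\Es$ the map $\hat{*}_\alpha$ is an $\alpha$-involution, it is covariant with respect to $\hatcirc_q$ for $q\notin\alpha$ and contravariant for $q\in\alpha$; restricting to a fibre, $\dag_{j(\alpha)}$ is therefore multiplicative with respect to $\cdot_{k(q)}$ precisely when $q\notin\alpha$ and antimultiplicative precisely when $q\in\alpha$. Hence the covariance type of the pair $(\circ_p,*_\alpha)$ coincides with that of $f(\circ_p,*_\alpha)=(\cdot_{k(p)},\dag_{j(\alpha)})$, i.e.\ $f$ preserves covariance, which is the asserted condition.

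For the sufficiency direction I would, given a covariance-preserving $f$, define $\hatcirc_p$ on $\Gamma(\Xs;T^\bullet(\As))$ by the convolution formula of Theorem~\ref{th: convo} using the product $\cdot_{k(p)}$, and define $\hat{*}_\alpha$ by \eqref{eq: invo-cat} using $\dag_{j(\alpha)}$. The associative unital $n$-category-with-non-commutative-exchange structure is then obtained exactly as in Theorem~\ref{th: convo}, each $(\As,\cdot_{k(p)})$ being a monoid with the common identity. The heart of the matter is verifying that each $\hat{*}_\alpha$ is genuinely an $\alpha$-involution, and this is the one explicit calculation I would carry out: compute $(\sigma\hatcirc_q\rho)^{\hat{*}_\alpha}$, push the outer $\dag_{j(\alpha)}$ through the finite sum, and then reindex the summation set $\{x\circ_q y=z^{*_\alpha}\}$ by applying $*_\alpha$ to the relation, using that $*_\alpha$ is covariant or contravariant on $\Xs$ according as $q\notin\alpha$ or $q\in\alpha$. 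The two reorderings — the one coming from $\dag_{j(\alpha)}$ on $\As$ and the one coming from $*_\alpha$ on $\Xs$ — are forced to agree by the covariance-preservation of $f$, so that $\hat{*}_\alpha$ comes out covariant under $\hatcirc_q$ for $q\notin\alpha$ and contravariant for $q\in\alpha$, as required. Involutivity $(\sigma^{\hat{*}_\alpha})^{\hat{*}_\alpha}=\sigma$ and mutual commutativity of the $\hat{*}_\alpha$ follow at once from the corresponding properties of the $\dag_j$ on $\As$ and of the $*_\alpha$ on $\Xs$.

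The step I expect to be the main obstacle is not the involution computation (which is clean) but the non-commutative exchange when the products $\cdot_{k(0)},\dots,\cdot_{k(n-1)}$ genuinely differ: I would verify it by noting that all these products share the single identity $1_\As$, so that $\hatcirc_q$-multiplication by a $q$-identity section is fibrewise multiplication by $1_\As$ and is therefore $\hatcirc_p$-functorial; it is precisely the Eckmann--Hilton obstruction recorded in Remark~\ref{rem: obs} that prevents the usual exchange and makes the non-commutative exchange the correct — and available — requirement. The only other care needed is the already-flagged factoring of $f$, which must be established before the formulas for $\hatcirc_p$ and $\hat{*}_\alpha$ are even well posed.
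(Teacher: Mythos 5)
Your proposal is correct, and it follows what is essentially the paper's own (implicit) route: the paper states Proposition~\ref{prop: obs} without a separate proof, as a synthesis of the immediately preceding results, and your two directions reconstruct exactly that synthesis --- necessity by restricting the structure to the fibres $\Es_\bullet\simeq\As$ over objects (the preceding proposition on fibres of categorical bundles), sufficiency by running the convolution construction of Theorem~\ref{th: convo} with one product per composition and the involution formula~\eqref{eq: invo-cat} with one involution per $*_\alpha$, the reindexing computation being precisely the covariance bookkeeping that Remark~\ref{rem: obs} isolates, and the common identity $1_\As$ being what rescues the non-commutative exchange when the products $\cdot_{k(p)}$ genuinely differ.

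One caveat, which you partly flag but do not actually close: your claim that $f$ is ``forced to factor'' through the two projections is justified only in the necessity direction, where $f$ is read off an already existing structure. In the sufficiency direction, a covariance-preserving $f$ that does not factor gives you no well-posed choice of a single product for each $\hatcirc_p$ and a single involution for each $\hat{*}_\alpha$, and factorability is not a consequence of covariance-preservation: take $n=2$, $\Lambda=\{\{0\}\}$, and $\As$ with products $\cdot_0,\cdot_1$ and involutions $\dag_0,\dag_1$ such that $\dag_0$ is antimultiplicative only for $\cdot_0$ while $\dag_1$ is multiplicative only for $\cdot_1$; then $f(\circ_0,*_{\{0\}}):=(\cdot_0,\dag_0)$ and $f(\circ_1,*_{\{0\}}):=(\cdot_1,\dag_1)$ preserve covariance, yet no single $\dag_j$ serves both compositions, so $\As$ is not a system of coefficients. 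This is really a looseness in the paper's own phrasing --- the intended reading of ``function preserving the covariance properties'' is a pair of assignments, one on compositions and one on involutions --- but your ``if'' direction silently adopts that reading rather than establishing it, so you should either state that reading explicitly as the interpretation of the proposition or add the factored form of $f$ as a hypothesis.
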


As a consequence, we see immediately that commutative monoids do not pose any further problem as ``systems of coefficients'' and that, even when the non-commutative exchange is assumed, non-commutative involutive monoids $(\As,\cdot,\dag)$ can be ``systems of coefficients'' only when all the involutions in the base category $\Xs$ have (with all the compositions) the same covariance of the pair $(\cdot,\dag)$. 

In order to exploit convolution $n$-categories $\Es$ as a source of non-trivial examples of fully involutive \hbox{$n$-categories} with non-commutative $n$-diagonal home-sets $\Es_\bullet$ (and hence necessarily with non-commutative exchange), we must utilize a more ``sophisticated'' system of coefficients $\As$. 

\medskip 

Motivated from the previous discussion, we are naturally induced to propose the following notion: 
\begin{definition}
A \emph{hyper-C*-algebra}
\footnote{We warn the reader that there is a conflict of terminology with the usage of the term ``hyper-algebra'' in the area of universal algebra, where an ``hyperalgebra'' (also called multialgebra or polyalgebra) means an algebraic structure with set-valued operations. \label{foo: norms2}
\emph{Important note}: As already mentioned in footnote~\ref{foo: norms}, in the infinite dimensional case, the requirement of simultaneuous completeness for all the norms will be in general too restrictive and a more appropriate choice is to ask the completeness in the uniformity induced by the family of norms. We will return to these infinite dimensional technical issues elsewhere. 
}  
$(\As,\circ_0,\dots,\circ_{n-1},*_0,\dots,*_{n-1})$ is a complete topo-linear space $\As$ equipped with pairs of multiplication/involution $(\circ_k,*_k)$, for $k=0,\dots n-1$, each inducing on $\As$ a C*-algebra structure, via a necessarily unique C*-norm $\| \cdot\|_k$, compatible with the given fixed topology. 
\end{definition}

In the same vein, we might introduce the notions of \emph{hyper-monoid} and \emph{hyper-involutive-monoid} to describe the more general abstract algebraic structures naturally arising from (involutive) convolutions of $n$-categories and (partially) involutive $n$-categories (with non-commutative exchange), but we will not elaborate on this any further. 

\begin{proposition} 
Given a unital commutative C*-algebra $\As$ and a finite globular (cubical) higher (fully) involutive $n$-category $\Xs$, the $\Xs$-convolution $*$-algebra $\MM_\Xs(\As):=\Gamma(T^\bullet(\As))$ is a hyper C*-algebra with the operations of $\circ_q$-convolution and $*_q$-involutions, for $q=0,\dots, n-1$. 
\end{proposition}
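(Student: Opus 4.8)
The plan is to verify the three defining features of a hyper-C*-algebra on the common underlying space $\MM_\Xs(\As)=\Gamma(T^\bullet(\As))$: that it is a complete topological vector space, that each pair $(\hatcirc_q,\hat{*}_q)$ endows it with a C*-algebra structure via a necessarily unique C*-norm $\|\cdot\|_q$, and that these $n$ norms are mutually equivalent (compatible with one fixed topology). First I would record what is already available. Since $\Xs$ is finite, $\Gamma(T^\bullet(\As))$ is, as an $\As$-module, free of finite rank $|\Xs|$, hence a complete topological vector space whose topology is that of the finite product of fibres and is independent of $q$. By theorem~\ref{th: convo} each $(\MM_\Xs(\As),\hatcirc_q)$ is a unital associative algebra; and since $\As$ is commutative and $\Xs$ is fully involutive (in particular $\Lambda$-involutive with $\Lambda$ containing every singleton $\{q\}$), formula~\eqref{eq: invo-cat} supplies a conjugate-linear involution $\hat{*}_q$ of $(\MM_\Xs(\As),\hatcirc_q)$, given on scalars by $(\sigma^{\hat{*}_q})_z=\overline{\sigma_{z^{*_q}}}$. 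Thus for each $q=0,\dots,n-1$ we already have a complex unital $*$-algebra $(\MM_\Xs(\As),\hatcirc_q,\hat{*}_q)$, and only the existence of a C*-norm remains.

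For the C*-norm I would reduce to the scalar case by Gel'fand--Na\u\i mark duality. Writing $\As\cong C(Y)$ for a compact Hausdorff space $Y$, for each fixed $q$ the convolution $*$-algebra $(\MM_\Xs(\As),\hatcirc_q,\hat{*}_q)$ is canonically $*$-isomorphic to $C(Y)\otimes_\CC\CC[\Xs_q]\cong C\big(Y;\CC[\Xs_q]\big)$, where $\CC[\Xs_q]$ denotes the scalar convolution $*$-algebra of the finite involutive $1$-category $(\Xs,\circ_q,*_q)$ obtained by retaining only the composition $\circ_q$. Consequently it suffices to prove that each $\CC[\Xs_q]$ is a finite-dimensional C*-algebra; then $C(Y;\CC[\Xs_q])$ is a C*-algebra under $\|\sigma\|_q:=\sup_{y\in Y}\|\sigma(y)\|_{\CC[\Xs_q]}$, and transporting this norm along the isomorphism yields the required C*-norm on $\MM_\Xs(\As)$. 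I would realise the C*-structure on $\CC[\Xs_q]$ through the left regular representation on the finite-dimensional Hilbert space $\ell^2(\Xs)$, $\sigma\mapsto L_\sigma$ with $L_\sigma\xi:=\sigma\hatcirc_q\xi$: multiplicativity is the associativity of $\hatcirc_q$, the identity $L_{\sigma^{\hat{*}_q}}=(L_\sigma)^{*}$ is a direct index computation using that $*_q$ reverses $\circ_q$-sources and targets, and faithfulness follows from unitality. Being a finite-dimensional $*$-algebra faithfully $*$-represented on a Hilbert space, $\CC[\Xs_q]$ is a C*-algebra with the operator norm.

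The main obstacle is precisely this verification that $L_{\sigma^{\hat{*}_q}}=(L_\sigma)^{*}$ and that no nilpotents obstruct a C*-norm. Concretely, the index cancellation in the adjoint identity forces $x^{*_q}\hatcirc_q x$ to act as the left $\circ_q$-identity of the relevant cell, which is exactly the groupoid-like behaviour guaranteed by full involutivity of $\Xs$ (equivalently, by the positivity of $x^{*_q}\circ_q x$ on the diagonal blocks, cf.~proposition~\ref{prop: obs}). It is here, and nowhere else, that one genuinely uses that $\Xs$ is \emph{fully} involutive rather than merely involutive: for a non-groupoid involutive base the scalar algebra $\CC[\Xs_q]$ can fail to admit any C*-norm, so this step is where the hypothesis does its essential work.

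Finally I would assemble the hyper-C*-algebra. Each $\|\cdot\|_q$ is unique by uniqueness of C*-norms on a fixed $*$-algebra. For equivalence, observe that $\CC[\Xs_q]$ is a finite-dimensional C*-algebra carried by the fixed $|\Xs|$-dimensional complex space $\CC^{\Xs}$, so its norm is equivalent to the standard $\ell^\infty$-norm on $\CC^{\Xs}$; passing to $C(Y;-)$ shows each $\|\cdot\|_q$ is equivalent to the single fibrewise supremum norm $\|\sigma\|_\infty:=\sup_{x\in\Xs}\|\sigma_x\|_{\As}$ on $\MM_\Xs(\As)$. Hence the $n$ norms are mutually equivalent and all induce the one fixed topology, which together with the C*-algebra structures established above gives every axiom of a hyper-C*-algebra. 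The cubical case is identical: among the $2^n$ cubical convolutions one retains the $n$ pairs $(\hatcirc_q,\hat{*}_q)$, $q=0,\dots,n-1$, and repeats the argument verbatim.
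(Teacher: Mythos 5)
Your overall strategy is the same as the paper's: the paper's proof also obtains the norms from the regular representation, letting $\MM_\Xs(\As)$ act by each convolution on the direct sum $\bigoplus_{[\Xs]}\As$ with its usual norm, and taking the resulting operator norms. Your Gel'fand reduction $\MM_\Xs(\As)\cong C(Y;\CC[\Xs_q])$ followed by the action on $\ell^2(\Xs)$ is just a scalar-coefficient rephrasing of the same idea, and your completeness and norm-equivalence arguments for finite $\Xs$ are fine.

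However, there is a genuine gap at precisely the step you yourself single out as ``the main obstacle''. You claim that the adjoint identity $L_{\sigma^{\hat{*}_q}}=(L_\sigma)^*$ holds because the needed index cancellation (that $x^{*_q}\circ_q x$ acts as a left $\circ_q$-identity) is ``guaranteed by full involutivity of $\Xs$''. In the paper's terminology this implication is false: a fully involutive $n$-category is merely one equipped with $\alpha$-involutions for all $\alpha$, and the paper explicitly states that $n$-groupoids are a \emph{special case} of fully involutive $n$-categories, not the general case. Concretely, take the three-element commutative monoid $\Xs=\{e,a,a^2\}$ with $a^3=a^2$ and the identity map as involution; this is a finite fully involutive $1$-category, yet its convolution $*$-algebra is $\CC[x]/(x^2(x-1))$, in which $a-a^2$ is a nonzero self-adjoint nilpotent, so no C*-norm exists at all and the regular representation cannot be a $*$-representation. (Even for inverse categories the plain left regular representation fails to be a $*$-representation; one must restrict domains, as in inverse-semigroup C*-theory.) Your parenthetical appeal to proposition~\ref{prop: obs} does not repair this: that proposition concerns covariance properties of coefficient systems, not positivity or cancellation in the base category. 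So the cancellation you need genuinely requires a groupoid-type hypothesis on $\Xs$, not full involutivity. To be fair, the paper's one-line proof is silent on this verification as well, and the proposition is evidently intended for groupoid-like bases (elsewhere the paper only claims the C*/Fell structure for $\Xs\times\As$ ``when $\Xs$ is an $n$-groupoid''); but as written, your resolution of the key step rests on an implication that the paper's own definitions contradict.
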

\begin{proof}
To define the norm, take the direct sum $\bigoplus_{[\Xs]}\As$, where $[\Xs]$ denotes the set of all the globular $n$-cells of $\Xs$, with the usual norm; let $\MM_\Xs(\As)$ act on such direct sum in the usual way by each one of the convolutions and consider the different operator norms coming from each one of such compositions. 
\end{proof}

A class of extremely interesting examples of finite hyper C*-algebras, that are not naturally obtained as convolution hyper C*-algebras of strict globular (fully involutive) higher categories, is constituted by hypermatrices indexed by full-depth $n$-categories. 

\begin{definition}
A \emph{hypermatrix of depth-n} is a multimatrix $[x_{i_1\dots i_n}^{j_1\dots j_n}]\in \MM_{N^2_1\dots N^2_n}(\CC)$ having indices $i_k,j_k=1,\dots N_k$, for all $k=1,\dots,n$. 
\end{definition}

\begin{theorem}
The family $\MM_\Xs(\CC)$ of $\CC$-valued hypermatrices of depth-$n$ is a hyper C*-algebra. 
\end{theorem}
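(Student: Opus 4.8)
The plan is to realize the base $n$-category explicitly and then reduce the statement to the standard fact that a tensor product of finite-dimensional C*-algebras is again a finite-dimensional C*-algebra carrying a unique C*-norm. First I would take $\Xs:=\Xs_1\times\cdots\times\Xs_n$ to be the full-depth strict cubical $n$-category obtained as the Cartesian product of the pair groupoids $\Xs_k$ on the finite sets $\{1,\dots,N_k\}$, so that its cubical $n$-cells are exactly the tuples $((c_1,d_1),\dots,(c_n,d_n))$ and a section $\sigma\in\Gamma(T^\bullet(\CC))$ is precisely a depth-$n$ hypermatrix $[x^{c_1\dots c_n}_{d_1\dots d_n}]$. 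Under this identification the vector space $\MM_\Xs(\CC)$ becomes $\bigotimes_{k=1}^n\MM_{N_k}(\CC)$, with the indicator section $\delta^{((c_1,d_1),\dots,(c_n,d_n))}$ corresponding to the elementary tensor $E^{(1)}_{c_1d_1}\otimes\cdots\otimes E^{(n)}_{c_nd_n}$ of matrix units.

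The key computation is to unfold, for each subset $\gamma\subset\{1,\dots,n\}$, the convolution product $\hatcirc_\gamma$ attached to the cubical composition $\circ_\gamma$. By definition $\circ_\gamma$ forces equality of the cells on the factors $k\in\gamma$ and pair-groupoid composition on the factors $k\notin\gamma$; substituting this into $(\sigma\hatcirc_\gamma\rho)_z=\sum_{x\circ_\gamma y=z}\sigma_x\rho_y$ shows that $\hatcirc_\gamma$ acts as the Schur (Hadamard) product on each tensor factor with $k\in\gamma$ and as ordinary row-by-column matrix multiplication on each factor with $k\notin\gamma$, i.e.
\begin{equation*}
(\MM_\Xs(\CC),\hatcirc_\gamma)\ \cong\ \bigotimes_{k\in\gamma}\big(\CC^{N_k^2},\ \text{pointwise}\big)\ \otimes\ \bigotimes_{k\notin\gamma}\MM_{N_k}(\CC).
\end{equation*}

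Next I would note that each tensor factor is a finite-dimensional C*-algebra: $(\CC^{N_k^2},\text{pointwise})\cong C(\{1,\dots,N_k\}^2)$ is commutative with complex conjugation as involution, while $\MM_{N_k}(\CC)$ carries the usual conjugate-transpose. The companion involution $\hat*$ produced by formula~\eqref{eq: invo-cat}, built from the base involution $*_{\gamma^c}$ (which inverts exactly the pair-groupoid factors $k\notin\gamma$) composed with entrywise complex conjugation, restricts to complex conjugation on the Schur factors and to conjugate-transpose on the matrix factors, hence is the genuine C*-involution of the tensor product. Since a tensor product of finite-dimensional C*-algebras is a finite-dimensional C*-algebra, $(\MM_\Xs(\CC),\hatcirc_\gamma,\hat*_{\gamma^c})$ admits a unique C*-norm $\|\cdot\|_\gamma$; concretely $\|\cdot\|_\gamma$ is the operator norm of the $\hatcirc_\gamma$-convolution representation of $\MM_\Xs(\CC)$ on $\bigoplus_{[\Xs]}\CC$, exactly as in the proof of the preceding proposition. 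This also exhibits the result as the cubical instance, with $\As=\CC$, of that proposition.

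Finally, because $\MM_\Xs(\CC)$ is finite-dimensional, all $2^n$ C*-norms $\|\cdot\|_\gamma$ are automatically equivalent and induce the single canonical vector-space topology; the space is therefore complete and topolinear, and it carries the $2^n$ mutually compatible pairs $(\hatcirc_\gamma,\hat*_{\gamma^c})$ of C*-structures, which is precisely the definition of a hyper-C*-algebra. The one genuinely load-bearing step is the unfolding of $\hatcirc_\gamma$ in the second paragraph: one must verify that the ``equality on $\gamma$, composition off $\gamma$'' prescription of the cubical product turns into Schur-times-matrix multiplication and that the paired involution splits accordingly. Once this structural identification is secured, everything else is a citation of standard finite-dimensional C*-algebra theory together with the triviality of norm-equivalence in finite dimensions.
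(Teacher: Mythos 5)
Your proof is correct, and its computational core --- identifying each of the $2^n$ products as Schur (Hadamard) multiplication on some tensor factors and row-by-column multiplication on the rest, with matching involutions and tensor-product norms --- is exactly the decomposition on which the paper's own proof rests; but the two arguments are organized along genuinely different routes. The paper's proof is purely explicit: it writes down the $2^n$ multiplications $\bullet_\gamma$ by index-contraction formulas, the $2^n$ involutions $\star_\gamma$ (entrywise conjugation plus partial transpose), and the $2^n$ norms through the isomorphism $\MM_{N_1^2\dots N_n^2}(\CC)\simeq\MM_{N_1^2}(\CC)\otimes_\CC\cdots\otimes_\CC\MM_{N_n^2}(\CC)$ (operator norm on the contracted factors, max norm on the others), and then simply asserts the hyper-C*-property, without invoking the convolution picture at all. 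You instead derive these structures as the convolution products $\hatcirc_\gamma$ of the full-depth cubical $n$-category $\Xs_1\times\cdots\times\Xs_n$ of section~\ref{sec: full-depth}, which is precisely the content the paper defers to the theorem \emph{following} this one (recovering $\MM_\Xs(\As)$, for commutative $\As$, as the convolution hyper C*-algebra of the fully involutive full-depth category); your proof in effect merges that later identification with the present statement. What your route buys: the C*-axioms for each pair $(\hatcirc_\gamma,\hat{*}_{\gamma^c})$ come for free from standard facts about tensor products of finite-dimensional C*-algebras and uniqueness of C*-norms, and the ``complete topo-linear space with all norms compatible with one fixed topology'' clause in the definition of hyper-C*-algebra --- which the paper's proof passes over in silence --- is settled explicitly by finite-dimensional norm equivalence. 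What the paper's route buys: closed formulas for all the structures that are independent of the categorical machinery, which is what makes its subsequent question (``can all $2^n$ operations be seen as convolutions of some $n$-category?'') a meaningful one rather than a tautology. The one point worth flagging in your write-up is purely notational: your $\gamma$ (the equality/Schur set of the cubical composition) is the complement of the paper's $\gamma$ (the set of contracting indices), so when comparing with the displayed formulas in the paper one must swap $\gamma\leftrightarrow\gamma^c$; you are internally consistent, pairing $\hatcirc_\gamma$ with $\hat{*}_{\gamma^c}$, so this causes no gap.
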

\begin{proof}

On $\MM_{N^2_1\dots N^2_n}(\CC)$ there are $2^n$ different multiplications acting at every level either as convolution or as Schur product: 
$[x^{i_1\dots i_k\dots i_n}_{j_1\dots j_k\dots j_n}]\bullet_{\gamma} 
[y^{i'_1\dots i'_k\dots i'_n}_{j'_1\dots j'_k\dots j'_n}]:=
[\sum_{k\in \gamma}\sum_{o_k=1}^{N_k} x^{i_1\dots i_k\dots i_n}_{j_1\dots o_k\dots j_n}\ 
y^{i_1\dots o_k\dots i_n}_{j_1\dots j_k\dots j_n}]$, 
where $\gamma\subset\{1,\dots,n\}$ is the set of contracting indices. 

There are $2^n$ involutions taking the conjugate of all the entries and, at every level, either the transpose or the identity: 
$[x^{i_1\dots i_k\dots i_n}_{j_1\dots j_k\dots j_n}]^{\star_\gamma}:=
[\cj{x}^{i_1\dots j_{k_1}\dots j_{k_m}\dots i_n}_{j_1\dots i_{k_1}\dots i_{k_m}\dots j_n}]$,  
for all $\gamma:=\{k_1,\dots,k_m\}\subset \{1,\dots,n\}$. 

There are $2^n$ C*-norms taking either the operator norm or the maximum norm at every level. Using the natural isomorphism 
$\MM_{N_1^2\dots N_n^2}(\CC)\simeq \MM_{N^2_1}(\CC)\otimes_\CC \cdots \otimes_\CC \MM_{N_n^2}(\CC)$, these norms can be defined as: 
$\|[x^{i_1}_{j_1}]\otimes \cdots \otimes [x^{i_n}_{j_n}]\|_\gamma:=
\prod_{k\in\gamma} \|[x^{i_k}_{j_k}]\|\cdot \prod_{k'\notin\gamma}\|[x^{i_{k'}}_{j_{k'}}]\|_\infty$, $\forall \gamma\subset\{1,\dots, n\}$, 
where $\|[x^{i_k}_{j_k}]\|$ is the C*-norm on $\MM_{N_k}(\CC)$ and $\|[x^{i_k}_{j_k}]\|_\infty:=\max_{i,j}|x^i_j|$. 

With such ingredients $(\MM_{N^2_1\dots N^2_n}(\CC),\bullet_\gamma,\star_\gamma, \|\ \|_\gamma, \gamma\subset\{1,\dots,n\})$ 
is a hyper C*-algebra. 
\end{proof}

\begin{remark}
If in place of the complex numbers $\CC$ we consider an arbitrary \emph{non-commutative} \hbox{C*-algebra} $\As$, the family of $\As$-valued hypermatrices $\MM_\Xs(\As)$ is still a hyper-C*-algebra. 
\xqed{\lrcorner}
\end{remark}

\textit{Can we see all the $2^n$ operations in the hypermatrices $\MM_{N_1^2,\dots,N^2_n}(\As)$ as convolutions of some $n$-category?} 

\medskip 

Hypermatrices $\MM_\Xs(\CC)$ obtained via convolution of globular $n$-categories $\Xs$ have only $n$ compositions. 
The same is actually true for convolutions of cubical $n$-categories (see~\cite{BCM}).  

The C*-algebra $\MM_{\Xs_1}(\CC)\otimes \MM_{\Xs_2}(\CC)$ coincides with the convolution C*-algebra 
$\CC[\Xs_1\times\Xs_2]=\MM_{\Xs_1\times\Xs_2}(\CC)$ of the Cartesian product $\Xs_1\times \Xs_2$ of the finite pair groupoids, but the product of $n$ finite pair groupoids $\Xs:=\{1,\dots,N_1\}^2\times\cdots\times\{1,\dots,N_n\}^2$ has a richer structure of ``full-depth  $n$-tuple category'' (via compositions on the ``oriented borders''), as we described in section~\ref{sec: full-depth}.   
Hence there are $2^n$ such possible compositions on $\Xs$ and we can recover $\MM_{N_1^2,\dots,N_n^2}(\CC)$ as a convolution hyper C*-algebra of the ``full depth $n$-tuple category'' $\Xs$. 

\begin{theorem}
Let $\As$ be a commutative C*-algebra.  
The hyper C*-algebra $\MM_\Xs(\As)$ of $\As$-valued hypermatrices, indexed by the Cartesian product $\Xs$ of $n$ finite pair groupoids $\Xs:=\Xs_1\times\dots\times\Xs_n$, is the convolution hyper C*-algebra of the fully involutive full-depth $n$-category $\Xs:=\Xs_1\times\dots\times\Xs_n$. 
\end{theorem}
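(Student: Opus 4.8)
The plan is to prove the theorem by exhibiting an explicit identification of the convolution construction of theorem~\ref{th: convo}, applied to the full-depth category $\Xs$, with the hypermatrix hyper-C*-algebra $\MM_\Xs(\As)$, matching each of the $2^n$ convolution products, involutions, and norms with the corresponding hypermatrix data $\bullet_\gamma$, $\star_\gamma$, $\|\cdot\|_\gamma$ for $\gamma\subset\{1,\dots,n\}$. Since both structures are carried by the same underlying linear space, the task reduces to three ``matching'' verifications plus completeness.

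First I would fix the base. Each factor $\Xs_k$ is the finite pair groupoid of $\{1,\dots,N_k\}$, whose cells are the arrows $(i_k,j_k)$; by the full-depth product construction of section~\ref{sec: full-depth}, the Cartesian product $\Xs=\Xs_1\times\cdots\times\Xs_n$ carries the $2^n$ compositions $\circ_\gamma$, and it is fully involutive with the $2^n$ involutions $*_\gamma$ obtained by transposing the pair-groupoid arrow in each direction $k\in\gamma$. An $n$-cell of $\Xs$ is a tuple $z=((i_1,j_1),\dots,(i_n,j_n))$, so a section $\sigma\in\Gamma(\Xs;T^\bullet(\As))$ is exactly an assignment $z\mapsto\sigma_z\in\As$, i.e. an $\As$-valued hypermatrix $[x^{i_1\dots i_n}_{j_1\dots j_n}]$ with $x^{i_1\dots i_n}_{j_1\dots j_n}:=\sigma_z$. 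This yields a linear bijection $\Gamma(\Xs;T^\bullet(\As))\simeq\MM_\Xs(\As)$, and finiteness of $\Xs$ makes the space finite-dimensional over $\As$, hence automatically complete.

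The core computation identifies each convolution with the matching hypermatrix multiplication. Applying theorem~\ref{th: convo} to the $1$-category $(\Xs,\circ_\gamma)$ gives $(\sigma\hatcirc_\gamma\rho)_z=\sum_{x\circ_\gamma y=z}\sigma_x\cdot\rho_y$. In each direction $k$ in which $\circ_\gamma$ composes, the pair-groupoid rule $(a,b)\circ(b,c)=(a,c)$ forces the $k$-components of $x,y$ to be $(i_k,o_k),(o_k,j_k)$ with $o_k$ summed over $\{1,\dots,N_k\}$; in each frozen direction $k$, both $k$-components equal $(i_k,j_k)$. Substituting these into the sum reproduces verbatim the hypermatrix contraction over the indices of $\gamma$ together with the pointwise (Schur) product in the complementary directions, so $\hatcirc_\gamma=\bullet_\gamma$ under the identification above. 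In the same way the involution formula $(\sigma^{\hat{*}_\gamma})_z=(\sigma_{z^{*_\gamma}})^{*_\As}$ transposes precisely the indices of $\gamma$ and conjugates every entry, giving $\hat{*}_\gamma=\star_\gamma$; here commutativity of $\As$ is exactly what is needed (cf.~remark~\ref{rem: obs}), since the single antimultiplicative conjugation $*_\As$ must be compatible simultaneously with all $2^n$ covariance patterns of the $*_\gamma$, which is possible only when $*_\As$ is both multiplicative and antimultiplicative.

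Finally I would match the norms. Each convolution C*-norm is the operator norm of the action of $\MM_\Xs(\As)$ on $\bigoplus_{[\Xs]}\As$ determined by $\hatcirc_\gamma$; under the tensor factorization $\MM_\Xs(\As)\simeq\bigotimes_{k}\MM_{\Xs_k}(\As)$ this action factors as the matrix action (operator norm) in the composing directions $k\in\gamma$ and the pointwise action (maximum norm) in the frozen directions $k\notin\gamma$, yielding exactly $\|\cdot\|_\gamma$. With $\As$ commutative, each triple $(\bullet_\gamma,\star_\gamma,\|\cdot\|_\gamma)$ then satisfies the C*-identity and positivity, so $(\MM_\Xs(\As),\{\bullet_\gamma,\star_\gamma,\|\cdot\|_\gamma\}_{\gamma\subset\{1,\dots,n\}})$ is a hyper-C*-algebra which, by the three matchings, is the convolution hyper-C*-algebra of $\Xs$. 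The step I expect to be the main obstacle is the norm identification: one must verify that the distinct convolution operator norms factor cleanly through the tensor decomposition and coincide with the mixed operator/maximum norms $\|\cdot\|_\gamma$, and confirm that the commutativity of $\As$ is genuinely what permits one coefficient involution to serve all $2^n$ self-dualities at once, the property that fails for non-commutative coefficients per remark~\ref{rem: obs}.
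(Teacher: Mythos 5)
Your proposal is correct and is essentially the paper's own (implicit) argument: the paper states this theorem without a separate proof, and the intended justification is exactly your three identifications --- sections of $T^\bullet(\As)$ over the full-depth product of pair groupoids are $\As$-valued hypermatrices, the convolutions $\hatcirc_\gamma$ of theorem~\ref{th: convo} become index contraction in the composed directions and Schur product in the frozen ones, the involutions $(\sigma^{\hat{*}_\gamma})_z=(\sigma_{z^{*_\gamma}})^{*_\As}$ become the $\star_\gamma$, and the C*-norms come from the action on $\bigoplus_{[\Xs]}\As$ as in the paper's earlier proposition --- together with the observation from remark~\ref{rem: obs} that commutativity of $\As$ is precisely what lets the single coefficient involution serve all $2^n$ covariance patterns at once. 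The only caveat is one of labelling: under the paper's formal definition of full-depth compositions (where the subscript lists the \emph{frozen} directions) your identification would read $\hatcirc_\gamma=\bullet_{\{1,\dots,n\}\setminus\gamma}$ rather than $\hatcirc_\gamma=\bullet_\gamma$, but since the paper's own graphical convention (and its remark that the componentwise composition is $\circ_{12\cdots n}$) indexes compositions by the composed directions instead, this complementation is a harmless relabelling that does not affect the substance of the argument.
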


\begin{remark}
When $\As$ is \textit{commutative}, the hyper C*-algebra $\MM_\Xs(\As)$ is the envelope of the fully involutive full-depth Fell-bundle $\Es:=T^\bullet(\As)$ with the usual exchange property in place. 

Unfortunately, when $\As$ is a non-commutative C*-algebra, the hyper C*-algebra $\MM_\Xs(\As)$ cannot be obtained as a convolution enveloping algebra of a Fell bundle, even if the non-commutative exchange property is assumed!\footnote{
The reason is again that the covariance conditions imposed by proposition~\ref{prop: obs} cannot in general be satisfied.} 
\xqed{\lrcorner}
\end{remark}

If we utilize hyper C*-algebras $\As$ as systems of coefficients, we can finally obtain explicit examples of fully involutive convolution globular $n$-categories $\Es$. 

\begin{theorem}\label{th: obs}
Let $\As$ be a hyper C*-algebra with respect to $n$ pairs of product/involution $(\cdot_k,\dagger_k)$, for $k=0,\dots,n-1$. 
Let $(\Xs,\circ_0,\dots,\circ_{n-1}, *_0,\dots,*_{n-1})$ be a fully involutive globular $n$-category (with commutative or non-commutative exchange). 
The convolution $n$-bundle $\Es\subset \MM_\Xs(\As)$ is now a fully involutive globular $n$-category necessarily with non-commutative exchange, as soon as one of the products in $\As$ is non-commutative.   
\end{theorem}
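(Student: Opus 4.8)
The plan is to give $\Es = T^\bullet(\As) = \{a_x := a\,\delta^x : a\in\As,\ x\in\Xs\}$ its $n$ compositions and $n$ involutions by \emph{matching levels}, i.e.~using the $p$-th product and $p$-th involution of the coefficient algebra in the $p$-th convolution:
\[
a_x \hatcirc_p b_y := (a\cdot_p b)_{x\circ_p y}, \qquad (a_x)^{\hat{*}_p} := (a^{\dagger_p})_{x^{*_p}},
\]
the first defined exactly when $x\circ_p y$ exists in $\Xs$. The first task is to check that each $(\Es,\hatcirc_p)$ is a $1$-category, with $\hatcirc_p$-identities $(1_p)_\iota$ ($\iota\in\Xs^p$, $1_p$ the $\cdot_p$-unit), and that the inclusions $\Es^q\subset\Es^p$ and the globular source/target maps behave correctly. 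This is exactly theorem~\ref{th: convo} applied one level at a time to the single associative unital algebra $(\As,\cdot_p)$, so it needs nothing beyond the unitality and associativity of each separate product $\cdot_p$.

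The heart of the matter is the non-commutative exchange and the covariance of the involutions, both of which split cleanly into a ``base part'' and a ``fibre part'' once the monomials $a_x$ are substituted. Comparing $(1_p)_\iota \hatcirc_q (a_x\hatcirc_p b_y)$ with $((1_p)_\iota\hatcirc_q a_x)\hatcirc_p((1_p)_\iota\hatcirc_q b_y)$, the $\Xs$-indices reproduce precisely the identity $\iota\circ_q(x\circ_p y)=(\iota\circ_q x)\circ_p(\iota\circ_q y)$, valid because $\Xs$ has non-commutative exchange, while the coefficients reduce to the hyper-monoid identity $1_p\cdot_q(a\cdot_p b)=(1_p\cdot_q a)\cdot_p(1_p\cdot_q b)$. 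Similarly, the covariance of $\hat{*}_p$ with respect to $\hatcirc_q$ reproduces on the base the $\{p\}$-contravariance of $*_p$ in $\Xs$ and, on the fibre, the requirement that $\dagger_p$ be a $\cdot_p$-anti-homomorphism but a $\cdot_q$-homomorphism for $q\neq p$. These fibre conditions are exactly the hypotheses packaged by proposition~\ref{prop: obs}: the diagonal matching $f(\circ_p,*_p):=(\cdot_p,\dagger_p)$ is covariance-preserving and the products $\cdot_p$ share the common unit demanded there, so $\As$ is an admissible system of coefficients for a \emph{fully} involutive convolution $n$-category (full because all $n$ involutions of $\Xs$ are matched). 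The mutual commutation of the $\hat{*}_p$ follows from that of the $*_p$ on $\Xs$ and of the $\dagger_p$ on $\As$.

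It then remains to record that this structure cannot satisfy the ordinary exchange law. If it did, proposition~\ref{prop: eh} (Eckmann--Hilton) would force every diagonal block ${}_{(n-1)}\Es_{\bullet\bullet}$, which is isomorphic under $\hatcirc_p$ to the fibre $(\As,\cdot_p)$, to be commutative, contradicting the assumption that at least one $\cdot_p$ is non-commutative; hence the non-commutative exchange is the strongest exchange law available and is genuinely weaker than the usual one. Observe moreover that each diagonal block ${}_{(n-1)}\Es_{\bullet\bullet}\cong\As$ inherits the full hyper-C*-algebra structure of $\As$ — the $n$ pairs $(\hatcirc_p,\hat{*}_p)$ and the $n$ equivalent norms $\|\cdot\|_p$ — which is exactly the non-commutative C*-theoretic content these examples were designed to exhibit.

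The step I expect to be the genuine obstacle is the fibre-level compatibility between the \emph{different} products and involutions of $\As$: the hyper-monoid exchange identity $1_p\cdot_q(a\cdot_p b)=(1_p\cdot_q a)\cdot_p(1_p\cdot_q b)$ together with the cross-homomorphism property of each $\dagger_p$ with respect to $\cdot_q$ for $q\neq p$. These do not follow from the bare definition of a hyper-C*-algebra — equivalence of the norms alone says nothing about how the distinct multiplications interact — so one must either read them into the notion via the common-unit and covariance-preservation clauses of proposition~\ref{prop: obs}, or verify them directly in the examples of interest (for the hypermatrix algebras this is immediate, since each $\dagger_p$ transposes one family of indices while each $\cdot_q$ contracts a disjoint family, so the two operations commute). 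Everything else is routine bookkeeping on monomials.
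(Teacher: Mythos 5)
Your proposal is correct and takes essentially the approach the paper intends: the paper in fact prints no proof of theorem~\ref{th: obs} at all, stating it as the culmination of theorem~\ref{th: convo} (level-wise convolution structure), the proposition on $\alpha$-involutions via formula~\eqref{eq: invo-cat}, proposition~\ref{prop: obs} (admissibility of the coefficient system via a covariance-preserving matching, here the diagonal one $f(\circ_p,*_p)=(\cdot_p,\dagger_p)$), and proposition~\ref{prop: eh} (Eckmann--Hilton forcing non-commutative exchange when some $\cdot_p$ is non-commutative) --- which is precisely your decomposition into base and fibre parts. Your closing caveat is also well taken, and it points at the paper rather than at your argument: the paper's definition of hyper-C*-algebra only requires each pair $(\cdot_k,\dagger_k)$ to be a C*-structure with norm compatible with the fixed topology, so neither the common unit, nor the $\cdot_q$-covariance of $\dagger_p$ for $q\neq p$, nor the mutual commutation of the $\dagger_p$ follows from the definition (e.g.\ a C*-algebra together with its opposite product and the same adjoint is a hyper-C*-algebra violating the cross-covariance); these conditions must therefore be read as implicit hypotheses of the theorem, exactly as you suggest, and they do hold in the hypermatrix examples the statement is designed to capture.
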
 

\section{Outlook and Applications} \label{sec: app}

In this final section, we informally venture into uncharted territory, trying to suggest some intriguing connections between higher categories with non-commutative exchange and the study of ``morphisms'' of ``non-commutative spaces'' (and hence interactions of quantum systems~\cite{B}). 
We also provide a detailed list of several further interesting lines of development for the study of the categorical structures introduced in this paper. 

\subsection{Morphisms of Non-commutative Spaces}

Several people have already advocated the existence of an interplay between (higher) category theory and quantization (and hence non-commutative algebras) notably: J.E.Roberts, C.Isham, J.Baez, B.Coecke, N.Landsman, \dots, but the leading ideas for us here are mainly coming from: 
\begin{itemize}
\item 
L.Crane / R.Feynman~\cite{Cr}: in the suggestion to see quantization (non-commutativity) as a categorification effect (due to different paths between points), 
\item 
A.Connes / W.Heisenberg~\cite[chapter~1, section~1]{Co}: in their way to look at algebras of non-commutative spaces, such as matrix algebras, as convolution algebras of a category (groupoid).  
\end{itemize} 
These basic ideologies merge and are somehow strongly supported from our already mentioned results, theorem~\ref{th: bcl}, on the spectral structure of commutative full C*-categories in terms of spaceoids that  seem to indicate a direct route to a general spectral reconstruction of non-commutative C*-algebras as algebras of ``sections'' of complex line-bundles with a suitable categorical base space:\footnote{Significant work is ongoing on this topic: 

P.Bertozzini, R.Conti, N.Pitiwan ``Non-commutative Gel'fand-Na\u\i mark Duality'' (preprint in preparation); 

P.Bertozzini, R.Conti, N.Pitiwan ``Discrete Non-commutative Gel'fand-Na\u\i mark Duality'' (accepted for publication~\cite{BCP}); 

P.Bertozzini ``Non-commutative Gel'fand-Na\u\i mark Duality'' Mahidol International College, 28 March 2018 (seminar); 

P.Bertozzini ``Spectra of Non-commutative Unital C*-algebras'' Thammasat University, 14 August 2018 (seminar). 
}

\medskip 

\emph{Spectral Conjecture:} 
\textit{there is a spectral theory of non-commutative C*-algebras in terms of families of Fell complex line-bundles over involutive categories.} 
\begin{center}
\textit{Quantum space} \ 	
$\simeq$ \ \textit{spectrum of C*-algebra} \ 
$\simeq$ \ \textit{Fell line-bundle over an inverse involutive category}  
\xqed{\lrcorner}
\end{center} 
As it is stated above, without further details on the precise nature of the functors involved in such a non-commutative generalization of 
Gel'fand-Na\u\i mark duality, the conjecture is ``not even wrong'',\footnote{For example, even in finite dimensional situations, there are ``gauge redundancies'' that allow to express in different ways the same C*-algebra as convolution algebra of different spaceoids: an algebra of linear operators on a finite dimensional vector space is isomorphic in many different ways, one for every alternative choice of an orthonormal base, to an algebra of square matrices.} 
anyway this is not a serious issue for us here, because the conjecture surely holds for some sufficiently many interesting finite dimensional cases (such as matrix algebras) and our only goal for now is to make use of the spectral conjecture, in those ``safe cases'', as a motivation to propose an alternative way to look at the notion of morphism of non-commutative spaces. 

\medskip 

The usual Gel'fand-Na\u\i mark duality, when recasted in the language of theorem~\ref{th: bcl}, essentially says: 
\begin{align*}
\text{classical space} \ X 	&\simeq \text{spectrum of abelian C*-algebra}\ C(X;\CC) 
\\ 
									&\simeq \text{trivial line bundle $X\times \CC$ over space} \ X 
\\
									&\simeq \text{Fell line-bundle over the space $\Delta_X$ of ``loops'' of} \ X, 
\\
\\
\text{Abelian C*-algebra} \ C(X) 	&\simeq \text{algebra $\Gamma(X;X\times \CC)$ of sections  of} \ X\times \CC   
\\ 
									&\simeq \text{convolution algebra $\Gamma(\Delta_X;\Delta_X\times\CC)$.}  
\end{align*}

For the spectrum of a finite discrete space $X$ consisting of $N$ points, we have the following ``transitions'': 

\medskip 

\begin{gather*}
\xymatrix{
\bullet \ar@{}[rrr]_{X}& \bullet & \cdots & \bullet 
}
\qquad \rightsquigarrow \qquad 
\vcenter{
\xymatrix{
 & 
 & & 
\\
\bullet  \ar@(ul,ur) \ar@{}[rrr]_{\Delta_X}& \bullet  \ar@(ul,ur)& \cdots & \bullet  \ar@(ul,ur)
}
}
\\
\vcenter{
\xymatrix{
 & 
 & & 
\\
\bullet  \ar@(ul,ur) \ar@{}[rrr]_{\Delta_X}& \bullet  \ar@(ul,ur)& \cdots & \bullet  \ar@(ul,ur)
}
}
\qquad \rightsquigarrow \qquad 
\vcenter{ 
\xymatrix{
\ar@{-}[d]& \ar@{-}[d]& & \ar@{-}[d]
\\
\bullet  \ar@(ul,ur) \ar@{}[rrr]_{\Delta_X\times \CC}& \bullet  \ar@(ul,ur)& \cdots & \bullet  \ar@(ul,ur) 
}
}
\end{gather*}
where in the first line, the discrete set $X$ corresponds to the discrete groupoid $\Delta_X$ of identity loops, 
and in the second line, the discrete groupoid $\Delta_X$ corresponds to the trivial Fell line-bundle $\Delta_X\times\CC$ over $\Delta_X$. 

\medskip 

For the case of morphisms between classical spaces, the first transition entails: 
\begin{align*}
\text{morphism of classical spaces} \ X, Y 		&\simeq \text{map / relation / 1-quiver} :X\to Y 
\\
																&\simeq \text{level-2 relation} :\Delta_X\to\Delta_Y, 
\end{align*}
\begin{equation*}
\xymatrix{
x \ar@{|->}[r]& y
}
\qquad \rightsquigarrow \qquad 
\xymatrix{
x\ar@(ru,rd)&\imp & y\ar@(ul,dl)
}
\qquad  x\in X, \ y\in Y.  
\end{equation*}
The transition from $\Delta_X,\Delta_Y$ to their associated Fell line-bundles $\Delta_X\times \CC, \Delta_Y\times \CC$, (attaching a complex fiber to each 1-loop) seems to further suggest that also each 1-arrow $x\mapsto y$ in the morphism from $X$ to $Y$ should have a complex fiber attached. 

\medskip 

Dually, for a relation $R\subset X\times Y$ (1-quiver) with reciprocal $R^*\subset Y\times X$, the ``convolution algebra'' $\As$ of the trivial Fell line-bundle with base $\Delta_X\cup R\cup R^*\cup \Delta_Y$ is given by a linking C*-algebra 
\begin{equation*}
\As=\begin{bmatrix}
C(X) 				& \Gamma(R^*\times\CC)
\\ 
\Gamma(R\times\CC)		& C(Y)
\end{bmatrix}
\end{equation*}
that contains on the diagonal the C*-algebras $C(X)$, $C(Y)$, and off-diagonal the bimodule $\Gamma(R,R\times\CC)$ and its contragredient $\Gamma(R^*; R^*\times\CC)$. Hence, in a quite familiar way, the morphisms from $X$ to $Y$ are dually given by (Hilbert C*) bimodules, over the commutative C*-algebras $C(Y)$ and $C(X)$. 

\bigskip 

When we pass to the study of (finite discrete) non-commutative spaces, we see that the appearance of level-2 relations and 2-cells, becomes  unavoidable and much more intriguing because, in light of the previous spectral conjecture, we have: 
\begin{align*}
\text{quantum space}  &\simeq \text{spectrum of non-commutative C*-algebra} 
\\
&\simeq \text{space of points with ``linearized relations''} 
\\
&\simeq \text{Fell line-bundle over a 1-quiver $Q^1$,} 
\\
\\ 
\text{algebra of functions on $Q^1$} &\simeq \text{``convolution'' algebra of $Q^1$.} 
\end{align*}
\medskip 

As a consequence, proceeding as before, we claim that: at the ``spectral level'' a morphism between two (finite discrete) quantum spaces $Q^1_X$ ad $Q^1_Y$ is a 2-quiver $Q^2$ with 2-cells like 
\begin{equation*}
\vcenter{\xymatrix{
x_1 \ar@/_0.5cm/[d]^f \ar@{.>}[r] \ar@{:>}[dr] & y_1 \ar@/^0.5cm/[d]_{g} 
\\
x_2 \ar@{.>}[r] & y_2
}} \qquad f\in Q^1_X, \quad g\in Q^1_Y, 
\end{equation*}
and so, at the ``dual level'', a morphism of quantum spaces is a ``level-2 bimodule'' inside the convolution depth-2 hyper C*-algebra $\Gamma(Q^2)$ of the involutive 2-category generated by the morphism 2-quiver $Q^2$.  

The possible relevance of higher C*-categories and hyper-C*-algebras to formally describe, at least at the topological level, these situations should be self-evident and we plan to address such issues in the future. 

\subsection{Other Related Topics} \label{sec: o}

Among the several lines of development directly related to the material introduced in this paper, we mention here only a few that are either already under study (and partially covered in other works) or that we deem particularly interesting or intriguing. 

\medskip 

\hspace{-0.30cm}{$\bullet$}
Involutive double categories (with usual exchange property) and their relationship with involutive \hbox{2-categories} are extensively studied in the preprint~\cite{BCM}. The study of involutions for general \hbox{$n$-tuple} cubical categories and versions of the non-commutative exchange for the cubical case should be the next immediate goal also in view of the inevitable appearance of cubical structures both in the study of hypermatrices and morphisms of non-commutative spaces. 
The possibility of even further ``exotic'' type of $n$-cells can be considered. In~\cite{BP} we had a first look at the case of ``hybrid'' globular 2-categories. 

\medskip 

\hspace{-0.30cm}{$\bullet$}
Strict (involutive) $\omega$-categories with quantum exchange and strict $\omega$-C*-categories\footnote{
A small technical obstacle in the definition of $\omega$-C*-categories, the triviality of the sets of $\omega$-arrows sharing the same $\omega$-cell, can be easily avoided (see for example P.Bejrakarbum's thesis~\cite{Bej}).} 
are immediately obtained omitting the finite bound on the number of binary operations of composition and the number of involutions involved in the definitions. 

Weak (involutive) higher categories with quantum exchange and weak higher C*-categories are of course a much more involved and complex area of investigation. 
We are currently formally developing such notions, starting from those more ``algebraic'' definitions of J.Penon, M.Batanin, T.Leinster (see~\cite{CL,Lsur,L} and the more recent works by C.Kachour~\cite{K1,K2}) that, being less motivated by classical homotopy theory, are more suitable for applications to operator theory.\footnote{P.Bejrakarbum, P.Bertozzini ``Weak Involutive Higher C*-Categories'' (work in progress).} 
Weak involutive categories in Penon's approach are studied in~\cite{BB}. 
Natural examples of weak higher C*-categories can be found in the study of higher categories of ``bimodules'' over strict higher \hbox{C*-categories} in the same way as the Morita weak \hbox{2-C*-category} originates from imprimitivity bimodules over C*-algebras.  

In this work we have only touched on some variants of higher categories, but we also have quite strong interest in the investigation of other possible higher involutive and C*-algebraic structures in the wider contexts of polycategories, multicategories (operads) and their vertically categorified counterparts.\footnote{P.Bertozzini ``C*-polycategories'' (work in progress).} 

\medskip 

\hspace{-0.30cm}{$\bullet$}
In the present paper, we opted for a compact treatment of $n$-C*-categories as ``involutive partial \hbox{$n$-mon}\-oids'' i.e.~via binary partial operations and involutions defined on $n$-cells. This can be too restrictive for the study of weak higher categories. A more immediate concern (also for the case of strict \hbox{C*-categories}) is that, in the same spirit, we defined linear structures and norms only at the level of $n$-cells. It is actually possible to provide a definition of ``iterated'' (quantum) \hbox{$n$-C*-categories} (and ``iterated'' $n$-Fell bundles), where different linear structures and different norms are introduced at each depth-level. This kind of approach has been already briefly presented (only for the case of usual exchange) in previous works~\cite{BCL3} and in the near future we plan to further comment on this point, clarifying the link between the two definitions.  

\medskip 

\hspace{-0.30cm}{$\bullet$}
Kre\u\i n versions of higher C*-categories, as a vertical categorification of the Kre\u\i n \hbox{C*-categories} already defined in~\cite{BRu}, can be produced and will be treated elsewhere. 

\bigskip 

\hspace{0.3cm} Among the many issues that remain to be explored, once a viable theory of higher C*-categories is in place, we mention: 

\medskip 

\hspace{-0.30cm}{$\bullet$}
Developing a representation theory of (quantum) higher C*-categories and higher \hbox{C*-algebras} (higher Gel'fand-Na\u\i mark representation theorems); Hilbert higher bimodules (higher Hilbert spaces) and higher Morita theory (higher $K$-theory); higher spectral theory via higher $n$-Fell line-bundles \dots\ higher Gel'fand-Na\u\i mark duality and more generally higher functional analysis. 

\medskip 

\hspace{-0.30cm}{$\bullet$}
In light of the already known connections between Fell bundles one one side and product systems~\cite{A} on the other, one would like to see if also higher categorifications of such notions retain similar connections. 

\medskip 

\hspace{-0.30cm}{$\bullet$}
Further extension of the investigation on the role of higher categories in the study of morphisms for non-commutative geometries and the study of ``higher non-commutative geometries'' as suitable ``spectral triples'' on higher C*-categories (as already suggested in~\cite{BCL3}). 

\medskip 

\hspace{-0.30cm}{$\bullet$}
In view of the current general interest in homotopy type theory and higher $\infty$-groupoids in the foundations of
mathematics~\cite{UFP} and in philosophy of mathematics~\cite{Ro} and the attempts to reconsider in this (categorical) light also the famous Hilbert sixth problem on the possible mathematical axiomatic foundations of physics (see U.Schreiber~\cite{Sc,Sc2} and A.Rodin~\cite{Ro2}), it is quite natural to speculate if the basic quantum nature of physics will give a more prominent role to quantum $\infty$-C*-categories for its foundations. 

\medskip 

\hspace{-0.30cm}{$\bullet$}
The usage of higher C*-categories for the formalization of relational Rovelli's quantum theory, and more generally ``quantum cybernetics'', has been already touched in~\cite{B} and it is one of the main motivations for the development of such techniques. 

\medskip 

\hspace{-0.30cm}{$\bullet$}
A possible definition of non-commutative homotopy theory.  

\medskip 

\hspace{-0.30cm}{$\bullet$}
The development of non-commutative ``higher measure theory'' and higher categorical modular theory (vertically categorifying the results in P.Ghez-R.Lima-J.E.Roberts~\cite[section~3]{GLR}) is one of our most immediate priorities also in view of the strong motivations coming from  proposals in ``modular algebraic quantum 
gravity''~\cite{BCL1, modular, B, Ra1, Ra2}. 

\medskip 

\hspace{-0.30cm}{$\bullet$}
The study of how non-commutative exchange will affect the usual notions of (higher) topoi, sites and Grothendieck categories (especially in situations where Cartesian closure is replaced by monoidal closure and suitable involutions/dualities are introduced). Possible links with the new notions of gleaves developed by F.Flori-F.Fritz~\cite{FF} are quite intriguing.  

\bigskip 

We are only taking the first steps into a vast landscape of vertically categorified functional analysis. 

\bigskip 

\emph{Notes and Acknowledgments:} 
We thank Ross Street and Ezio Vasselli for providing some additional references and some corrections. 

\smallskip

We also thank the anonymous referee for a careful reading of the original preprint and for suggesting improvements in the introduction (motivations), in section~\ref{rem: conjugates} and in the bibliography. 

\smallskip 

P.B.~thanks Starbucks Coffee at the $1^{\text{st}}$ floor of Emporium Tower and Jasmine Tower, in Sukhumvit, where he spent most of the time dedicated to this research project. 

He also would like to thank the following colleagues and friends for useful discussions and/or for the possibility to organize several seminar talks and attend conferences where preliminary versions of part of the material exposed in this paper have been presented: 
Ross Street, Michael Batanin and Tobias Fritz (``Category Theory 2013'' conference in Macquarie University - July 2013); Philipp H\"ohn, Matti Raasakka, Richard Kostecki and Andreas D\"oring (``Loop 13'' conference in Perimeter Institute - July 2013); David Evans (Bangkok meeting - 2013); Matilde Marcolli and Jan-Jitse Vanselaar (CalTech seminar - April 2014); Przemo Kranz, Luca Bombelli and Tommy Naugle (Olemiss seminars - April 2014); the late David Finkelstein (Atlanta meeting - May 2014); Jonathan Engle, Matthew Hogan and Kathie Broadhead (Florida Atlantic University seminar - May 2014); Rachel Martins, Pedro Resende and Roger Picken (Istituto Superior T\'ecnico Lisboa visit and seminar - May 2014); Thierry Masson, Carlo Rovelli, Igor Kanatchikov, Vaclav Zatloukal and Michel Wright (``FFP 14'' conference in Marseille - July 2014); 
Piotr Hajac and Tomasz Maszczyk (``Noncommutative Geometry and Index Theory'' conference at the Banach Center in Warsaw - November 2016); Thierry Masson and Roland Triay (CIMPA Vietnam School ``Noncommutative Geometry and Applications to Quantum Physics'' Quy Nhon - July 2017). 

\medskip 

This research is partially supported by Thammasat University research grant n.~2/15/2556: ``Categorical Non-commutative Geometry''. 
P.B.~also ackowledge the partial travel/accommodation support from the University of Rome II ``Tor Vergata'' (June 2014, May-June 2017) and from the Department of Mathematics and Statistics in Thammasat University. 

{\small

}

\end{document}